\title[{%
  A Notion of TDIness for Convex, Semidefinite, and Extended Formulations%
}]{%
  A Notion of Total Dual Integrality for Convex,\\%
  Semidefinite, and Extended Formulations%
}
\author{Marcel K.\ de Carli Silva}
\address[Marcel K.\ de Carli Silva]{%
  Instituto de Matemática e Estatística, Universidade de São Paulo%
}
\email{mksilva@ime.usp.br}
\thanks{%
  Part of the results in this paper appeared in the PhD
  thesis~\cite{Carli13a} of the first author.\\
  \indent%
  Research of the first author was supported in part by a Sinclair
  Scholarship, a Tutte Scholarship, Discovery Grants from NSERC, and
  by U.S.~Office of Naval Research under award number
  N00014-12-1-0049, while at the Department of Combinatorics and
  Optimization, University of Waterloo, and by FAPESP
  (Proc.~2013/03447-6), CNPq (Proc.~477203/2012-4), CNPq
  (Proc.~456792/2014-7), and CAPES, while at the Institute of
  Mathematics and Statistics, University of São Paulo.%
}
\author{Levent Tunçel}
\address[Levent Tunçel]{%
  Department of Combinatorics and Optimization, University of Waterloo%
}
\email{ltuncel@uwaterloo.ca}
\thanks{%
  Research of the second author was supported in part by Discovery
  Grants from NSERC and by U.S.~Office of Naval Research under award
  numbers N00014-12-1-0049 and N00014-15-1-2171.  Part of this work was
  done while the second author was visiting the Simons Institute for
  the Theory of Computing, supported in part by the DIMACS/Simons
  Collaboration on Bridging Continuous and Discrete Optimization
  through NSF grant \#CCF-1740425.%
}
\date{January 27, 2018}
\begin{document}

\begin{abstract}
  Total dual integrality is a powerful and unifying concept in
  polyhedral combinatorics and integer programming that enables the
  refinement of \emph{geometric} min-max relations given by linear
  programming Strong~Duality into \emph{combinatorial} min-max
  theorems.  The definition of total dual integrality (TDI) revolves
  around the existence of optimal dual solutions that are integral,
  and thus naturally applies to a host of combinatorial optimization
  problems that are cast as integer programs whose LP~relaxations have
  the TDIness property.  However, when combinatorial problems are
  formulated using more general convex relaxations, such~as
  semidefinite programs (SDPs), it is not at all clear what an
  appropriate notion of integrality in the dual~program~is, thus
  inhibiting the generalization of the theory to more general forms of
  structured convex optimization.  (In~fact, we argue that the
  rank-one constraint usually added to SDP relaxations is not adequate
  in the dual SDP.)

  In this paper, we propose a notion of total dual integrality for
  SDPs that generalizes the notion for~LPs, by~relying on an
  ``integrality constraint'' for SDPs that is primal-dual symmetric.
  A key ingredient for the theory is a generalization to compact
  convex sets of a result of Hoffman for polytopes, fundamental for
  generalizing the polyhedral notion of total dual integrality
  introduced by Edmonds and Giles.  We~study the corresponding theory
  applied to SDP formulations for stable sets in graphs using the
  Lovász theta function and show that total dual integrality in this
  case corresponds to the underlying graph being perfect.  We~also
  relate dual integrality of an SDP formulation for the maximum cut
  problem to bipartite graphs.  Total dual integrality for extended
  formulations naturally comes into play in this context.
\end{abstract}

\maketitle

\section{Introduction}
\label{sec:intro}

In the polyhedral approach to combinatorial optimization one usually
starts by formulating a combinatorial problem as an integer linear
program (ILP) of the form
\(
\max\setst{
  \iprodt{c}{x}
}{
  Ax \leq b,\,
  x \geq 0,\,
  x \in  \Integers^n
}
\), which is relaxed into a linear program (LP) and then studied in
the light of LP duality.  This basic approach of polyhedral
combinatorics can be summarized by the following simple yet
fundamental result:
\begin{theorem}
  \label{thm:LP-chain}
  If \(A \in \Rationals^{m \times n}\) is a matrix, and \(b \in
  \Rationals^m\) and \(c \in \Rationals^n\) are vectors, then
  \begin{align}
    \label{eq:chain-ILP}
    &
    \sup\setst{
      \iprodt{c}{x}
    }{
      Ax \leq b,\,
      x \geq 0,\,
      x \in \Integers^n
    }
    \tag{ILP}
    \\
    \label{eq:chain-LP}
    & \qquad \leq
    \sup\setst{
      \iprodt{c}{x}
    }{
      Ax \leq b,\,
      x \geq 0,\,
      x \in \Reals^n
    }
    \tag{LP}
    \\
    \label{eq:chain-LD}
    & \qquad\qquad \leq
    \inf\setst{
      \iprodt{b}{y}
    }{
      A^{\transp} y \geq c,\,
      y \geq 0,\,
      y \in \Reals^m
    }
    \tag{LD}
    \\
    \label{eq:chain-ILD}
    & \qquad\qquad\qquad \leq
    \inf\setst{
      \iprodt{b}{y}
    }{
      A^{\transp} y \geq c,\,
      y \geq 0,\,
      y \in \Integers^m
    }.
    \tag{ILD}
  \end{align}
  If~\eqref{eq:chain-ILP} and~\eqref{eq:chain-ILD} are both feasible,
  the suprema and infima are attained, and the middle (second)
  inequality holds with equality.
\end{theorem}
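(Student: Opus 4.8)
The plan is to treat the three inequalities in the chain \eqref{eq:chain-ILP}--\eqref{eq:chain-ILD} separately and then address attainment and the middle equality. The first and third inequalities are pure monotonicity of $\sup$ and $\inf$: every $x$ feasible for \eqref{eq:chain-ILP} is feasible for \eqref{eq:chain-LP}, since integrality is merely an added restriction, so \eqref{eq:chain-ILP} $\leq$ \eqref{eq:chain-LP}; symmetrically every $y$ feasible for \eqref{eq:chain-ILD} is feasible for \eqref{eq:chain-LD}, and shrinking the feasible set of a minimization problem cannot decrease its value, giving \eqref{eq:chain-LD} $\leq$ \eqref{eq:chain-ILD}. (Both hold with the usual conventions $\sup\emptyset = -\infty$ and $\inf\emptyset = +\infty$ for empty feasible regions.) The middle inequality \eqref{eq:chain-LP} $\leq$ \eqref{eq:chain-LD} is weak LP duality: for $x$ feasible for \eqref{eq:chain-LP} and $y$ feasible for \eqref{eq:chain-LD}, using $x \geq 0$ and $A^{\transp}y \geq c$ gives $\iprodt{c}{x} \leq \iprodt{A^{\transp}y}{x} = \iprodt{y}{Ax}$, and then using $y \geq 0$ and $Ax \leq b$ gives $\iprodt{y}{Ax} \leq \iprodt{y}{b} = \iprodt{b}{y}$; taking the supremum over $x$ and the infimum over $y$ yields the inequality.

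Next, assume \eqref{eq:chain-ILP} and \eqref{eq:chain-ILD} are both feasible, and fix an integral feasible point $x_0$ for the former and $y_0$ for the latter. Then $x_0$ is feasible for \eqref{eq:chain-LP} and $y_0$ is feasible for \eqref{eq:chain-LD}, so all four problems are feasible; the chain just established then sandwiches each of the four optimal values between the reals $\iprodt{c}{x_0}$ and $\iprodt{b}{y_0}$, so all four values are finite. In particular \eqref{eq:chain-LP} is feasible and bounded above, so the fundamental theorem of linear programming (strong LP duality) applies and yields that \eqref{eq:chain-LP} and \eqref{eq:chain-LD} have equal optimal values, both attained --- this is exactly the middle equality together with the attainment statement for \eqref{eq:chain-LP}--\eqref{eq:chain-LD}.

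Finally, for attainment in \eqref{eq:chain-ILP} and \eqref{eq:chain-ILD} I would exploit rationality of the data. Writing $c = q^{-1}c'$ with $c' \in \Integers^n$ and $q$ a positive integer, every value $\iprodt{c}{x}$ with $x \in \Integers^n$ lies in the discrete set $q^{-1}\Integers$; since the objective values of \eqref{eq:chain-ILP} form a nonempty subset of $q^{-1}\Integers$ that is bounded above by the finite value of \eqref{eq:chain-LP}, that subset has a largest element, so the supremum in \eqref{eq:chain-ILP} is attained, and the identical argument with $b$ in place of $c$ (using that the objective values of \eqref{eq:chain-ILD} are bounded below by the finite value of \eqref{eq:chain-LD}) handles \eqref{eq:chain-ILD}. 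Almost everything here is bookkeeping; the only substantial external input is strong LP duality, which I would cite rather than reprove, and the only spots needing care are the passage from feasibility of all four problems to finiteness of all four values (so that strong duality genuinely applies to \eqref{eq:chain-LP}) and the discreteness observation that upgrades the integral suprema and infima from limits to genuine optima.
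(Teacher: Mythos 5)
Your proof is correct. The paper itself does not spell out an argument for this theorem: the inequalities and the middle equality are left as standard weak/strong LP duality, and for the one genuinely non\-trivial point---attainment in \eqref{eq:chain-ILP} and \eqref{eq:chain-ILD}---the authors simply remark that it follows from Meyer's theorem \cite{Meyer74a}, which asserts that the integer hull of a rational polyhedron is itself a (rational) polyhedron, so that a bounded integer program over it attains its optimum. You replace that citation with a more elementary, self-contained argument: writing \(c = q^{-1}c'\) with \(c'\in\Integers^n\), the achievable objective values of \eqref{eq:chain-ILP} lie in the discrete closed set \(q^{-1}\Integers\), and a nonempty subset of \(q^{-1}\Integers\) bounded above (here by the finite value of \eqref{eq:chain-LP}) has a maximum; likewise for \eqref{eq:chain-ILD}. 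This discreteness argument is perfectly valid for the pure integer programs considered here and avoids the heavier machinery; what Meyer's theorem buys in exchange is generality (it covers mixed-integer programs and yields the polyhedrality of the integer hull, which the discreteness trick does not). Your handling of the remaining steps---monotonicity for the first and third inequalities, weak duality via \(\iprodt{c}{x}\leq\iprodt{y}{Ax}\leq\iprodt{b}{y}\), the sandwiching \(\iprodt{c}{x_0}\leq\text{\eqref{eq:chain-ILP}}\leq\dotsb\leq\iprodt{b}{y_0}\) to get finiteness, and then strong duality for the middle equality and attainment of \eqref{eq:chain-LP} and \eqref{eq:chain-LD}---matches the standard route the paper implicitly relies on.
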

\noindent (Attainment for~\eqref{eq:chain-ILP}
and~\eqref{eq:chain-ILD} follows from Meyer's
Theorem~\cite{Meyer74a}.)

Usually the feasible region of~\eqref{eq:chain-ILP} is contained
in~\(\set{0,1}^n\) and some optimal solution of~\eqref{eq:chain-ILD}
lies in~\(\set{0,1}^m\).  For instance, if \(G = (V,E)\) is a graph,
\(A\) is its \(V \times E\) incidence matrix, and both \(b\) and~\(c\)
are equal to the vector~\(\ones\) of all-ones, then
\eqref{eq:chain-ILP} formulates the maximum cardinality matching
problem and~\eqref{eq:chain-ILD} formulates the minimum cardinality
vertex cover problem.  Alternatively, if \(A\) is the \(E \times V\)
incidence matrix of~\(G\), we obtain the maximum cardinality stable
set problem and the minimum cardinality edge cover problem.  If \(A\)
is the clique-vertex incidence matrix of~\(G\), then
\eqref{eq:chain-ILP} still formulates the maximum cardinality stable
set problem, but now~\eqref{eq:chain-ILD} formulates the minimum
cardinality coloring problem.

What makes the conceptual framework brought forth by
\Cref{thm:LP-chain} so fundamental is the fact that, in many
interesting and important cases~\cite{Schrijver03a}, equality holds
throughout in the chain from~\Cref{thm:LP-chain}, which allows us to
refine a \emph{geometric} min-max relation (equality
between~\eqref{eq:chain-LP} and~\eqref{eq:chain-LD} given by LP
Strong~Duality) into a \emph{combinatorial} min-max relation (equality
between~\eqref{eq:chain-ILP} and~\eqref{eq:chain-ILD}).  For instance,
equality throughout holds for the first two cases described above
when~\(G\) is bipartite (and has no isolated vertices in the
second~case), thus proving very strong, weighted forms of Kőnig's
matching theorem and the Kőnig-Rado edge cover theorem.  In many
cases, the combinatorial optimality conditions thus obtained are
well-known to be key ingredients in the design of efficient algorithms
for solving the corresponding problems, both exactly and
approximately~\cite{Vazirani01a}.

Total dual integrality is arguably the most powerful and unifying
sufficient condition for equality throughout the chain
from~\Cref{thm:LP-chain}.  A vector in \(\Reals^n\) is \emph{integral}
if each of its components is an integer, and a rational system of
linear inequalities \(Ax\leq b\) is called \emph{totally dual integral
  (TDI)} if, for each integral vector~\(c \in \Integers^n\), the
linear program dual to \(\sup\setst{\iprodt{c}{x}}{Ax\leq b}\) has an
integral optimal solution whenever it has an optimal solution at~all.
In this case, if \(b\) itself is integral, then the polyhedron~\(P\)
determined by \(Ax \leq b\) is \emph{integral}, i.e., each nonempty
face of~\(P\) has an integral vector; thus, equality holds throughout
in the chain from~\Cref{thm:LP-chain}.  This was proved in seminal
work of Edmonds and Giles~\cite{EdmondsG77a} as a consequence of the
following fundamental result:
\begin{theorem}[Edmonds-Giles~\cite{EdmondsG77a}]
  \label{thm:Edmonds-Giles}
  If \(A \in \Rationals^{m \times n}\) and \(b \in \Rationals^m\)
  satisfy
  \(\sup\setst{
    \iprodt{c}{x}
  }{
    Ax \leq b
  }
  \in \Integers \cup \set{\pm\infty}
  \) for~each \(c \in \Integers^n\), then the polyhedron \(\setst{x
    \in \Reals^n}{Ax \leq b}\) is integral.
\end{theorem}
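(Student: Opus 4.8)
The plan is to reduce the statement to a standard fact about integral solutions of rational linear systems. Set $P := \setst{x \in \Reals^n}{Ax \le b}$; we may assume $P \neq \emptyset$, since otherwise $P$ has no nonempty face and is integral vacuously. Recall that $P$ is integral precisely when each of its minimal faces contains an integral vector, so it suffices to derive a contradiction from the assumption that some minimal face $F$ of $P$ contains no integral vector. By the standard description of minimal faces, there is a subsystem $A_1 x \le b_1$ of $Ax \le b$ with $F = \setst{x \in \Reals^n}{A_1 x = b_1}$; in particular, the rational system $A_1 x = b_1$ is solvable (since $F \neq \emptyset$) but has no integral solution.

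The first key step would be to invoke the Hermite-normal-form criterion for integral solvability: since $A_1 x = b_1$ is solvable over $\Rationals$ but not over $\Integers$, there is a rational vector $y$ with $y^{\transp} A_1 \in \Integers^n$ but $\iprodt{y}{b_1} \notin \Integers$. The goal is then to turn $y$ into an integral objective vector for which the supremum over $P$ is finite and \emph{not} an integer, contradicting the hypothesis. Taking $c := y^{\transp} A_1$ directly does not work, because this vector need not lie in the cone generated by the rows of $A$, so the supremum might be $+\infty$ — a value the hypothesis explicitly tolerates. The fix is a perturbation that pushes $c$ into that cone while keeping everything integral: let $w := A_1^{\transp} \ones$ be the sum of the rows of $A_1$, observe that $\sup\setst{\iprodt{w}{x}}{Ax \le b} = \iprodt{\ones}{b_1}$ and that this value is attained on $F$, choose a positive integer $\mu$ that clears the denominators of both $w$ and $\iprodt{\ones}{b_1}$, and set $c_N := N\mu\, w + y^{\transp} A_1 = A_1^{\transp}(N\mu \ones + y)$ for a positive integer $N$ to be chosen large.

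For $N$ large enough we have $N\mu \ones + y \geq 0$, so $c_N$ is a nonnegative combination of rows of $A$; hence the vector $u$ equal to $N\mu\ones + y$ on the rows of $A_1$ and $0$ elsewhere is feasible for the LP dual of $\max\setst{\iprodt{c_N}{x}}{Ax \le b}$, with dual objective value $(N\mu\ones + y)^{\transp} b_1 = N\mu\,\iprodt{\ones}{b_1} + \iprodt{y}{b_1}$. On the other hand, every $x \in F$ satisfies $A_1 x = b_1$, so $\iprodt{c_N}{x} = N\mu\,\iprodt{\ones}{b_1} + \iprodt{y}{b_1}$ as well, and weak duality pins the supremum to exactly this finite value (and shows it is attained). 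By the choice of $\mu$, the vector $c_N$ is integral and $N\mu\,\iprodt{\ones}{b_1} \in \Integers$, so $\sup\setst{\iprodt{c_N}{x}}{Ax \le b}$ equals an integer plus $\iprodt{y}{b_1}$, which is not an integer, contradicting the hypothesis. Therefore every minimal face of $P$ contains an integral vector, i.e., $P$ is integral.

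The step I expect to be the crux is the passage from the qualitative fact ``$F$ has no integral point'' to a concrete integral objective with non-integral optimum. The Hermite-normal-form certificate $y$ supplies the ``fractional'' direction, but the real subtlety is the perturbation $c_N = N\mu w + y^{\transp} A_1$: it must simultaneously land in the row cone of $A$ — so that the supremum is finite and attained rather than $+\infty$, which the hypothesis would permit — stay integral, and keep the optimal value congruent to $\iprodt{y}{b_1}$ modulo $\Integers$. Balancing these three requirements through the choices of $\mu$ and $N$ is where the argument has to be done carefully.
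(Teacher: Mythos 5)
The paper does not prove this theorem; it is stated with a citation to Edmonds and Giles, so there is no in-paper argument to compare against. Your proof is correct and is, in substance, the classical argument (as in Schrijver's \emph{Theory of Linear and Integer Programming}, Theorem~22.1): reduce to a minimal face \(F = \setst{x}{A_1 x = b_1}\), extract via the Hermite-normal-form (integer Farkas) criterion a rational \(y\) with \(y^{\transp}A_1\) integral and \(\iprodt{y}{b_1} \notin \Integers\), and then massage \(y\) into a nonnegative dual multiplier so that the resulting integral objective has finite, attained, non-integral optimum. The one place you deviate from the textbook route is in handling the rationality of \(A_1\): the standard proof first rescales the rows of \(Ax \leq b\) to make \(A\) and \(b\) integral (harmless, since the hypothesis depends only on \(P\)) and then adds a large integral vector to \(y\), whereas you keep \(A_1\) rational and instead add \(N\mu A_1^{\transp}\ones\) with \(\mu\) clearing the denominators of \(A_1^{\transp}\ones\) and \(\iprodt{\ones}{b_1}\). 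Both devices achieve the same three goals you identify (membership of \(c\) in the row cone, integrality of \(c\), and preservation of the fractional part of the optimum), and your bookkeeping for each is accurate, so the argument goes through.
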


\begin{corollary}[Hoffman~\cite{Hoffman74a}]
  \label{cor:Hoffman}
  Let \(A \in \Rationals^{m \times n}\) and \(b \in \Rationals^m\).
  If \(P \coloneqq \setst{x \in \Reals^n}{Ax \leq b}\) is bounded and
  \(\max_{x \in P}\iprodt{c}{x} \in \Integers\) for each \(c \in
  \Integers^n\), then \(P\) is integral.
\end{corollary}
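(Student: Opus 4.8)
The plan is to deduce this from the Edmonds--Giles Theorem, which is stated just above. The only gap between the hypotheses of Edmonds--Giles and what we are given is that Edmonds--Giles requires $\sup\setst{\iprodt{c}{x}}{Ax\le b}\in\Integers\cup\set{\pm\infty}$ for \emph{every} $c\in\Integers^n$, whereas here we only know the finite values of such suprema are integers; but since $P$ is bounded and nonempty (if $P=\emptyset$ the conclusion is vacuous), every such supremum is in fact finite and attained, so it equals $\max_{x\in P}\iprodt{c}{x}$, which is assumed to be an integer. Hence the hypothesis of \Cref{thm:Edmonds-Giles} is satisfied verbatim, and we conclude that $P$ is integral.

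First I would dispose of the degenerate case: if $P=\emptyset$, then $P$ has no nonempty face and is integral by definition, so assume $P\ne\emptyset$. Next, because $P$ is a bounded polyhedron, it is a polytope, so for each $c\in\Reals^n$ the linear function $x\mapsto\iprodt{c}{x}$ attains a (finite) maximum over $P$; in particular $\sup\setst{\iprodt{c}{x}}{Ax\le b}=\max_{x\in P}\iprodt{c}{x}\in\Integers$ for each $c\in\Integers^n$, so this common value lies in $\Integers\cup\set{\pm\infty}$. Finally I would invoke \Cref{thm:Edmonds-Giles} with this $A$ and $b$ to conclude that $P=\setst{x\in\Reals^n}{Ax\le b}$ is integral, which is exactly the assertion.

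There is essentially no obstacle here beyond the bookkeeping just described: the corollary is a direct specialization of the theorem, the point being only that boundedness upgrades the ``$\sup\in\Integers\cup\set{\pm\infty}$'' hypothesis to the more familiar ``$\max\in\Integers$'' hypothesis. If one wanted a proof independent of Edmonds--Giles, the natural route would be to argue face-by-face: given a nonempty face $F$ of $P$, pick $c\in\Integers^n$ whose maximizers over $P$ are exactly $F$ (possible since $P$ is rational — one can take $c$ to be an integral combination of the normals of the facets containing $F$), and then show $F$ contains an integral point; but this essentially reconstructs the proof of Edmonds--Giles, so the economical path is simply to cite it. I will therefore present the short deduction above.
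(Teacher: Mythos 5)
Your deduction is correct and matches the paper's (implicit) argument: the paper states this as an unproved corollary of \Cref{thm:Edmonds-Giles}, the point being exactly that boundedness makes every supremum finite and attained, so the Edmonds--Giles hypothesis holds and integrality of \(P\) follows. Your handling of the empty case and the closing remarks are fine but not needed beyond that one observation.
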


In the past couple of decades, it has become popular to formulate
combinatorial optimization problems using more general models of
convex optimization, with semidefinite programs (SDPs) playing a key
role.  Before we can proceed with our discussion, we need to introduce some
basic notation for SDPs.  The real vector space of symmetric \(n
\times n\) matrices is denoted by \(\Sym{n}\).  A matrix \(X \in
\Sym{n}\) is \emph{positive semidefinite} if \(\qform{X}{h} \geq 0\)
for every \(h \in \Reals^n\) or, equivalently, if every eigenvalue
of~\(X\) is nonnegative.  The \emph{semidefinite cone} is
\(\Psd{n} \coloneqq
\setst{
  X \in \Sym{n}
}{
  \text{\(X\) is positive semidefinite}
}
\).  The \emph{inner product} of \(X,Y \in \Sym{n}\) is \(\iprod{X}{Y}
\coloneqq \sum_{i=1}^n \sum_{j=1}^n X_{ij} Y_{ij}\).  Denote \([n]
\coloneqq \set{1,\dotsc,n}\) for each \(n \in \Naturals\).  We refer
the reader to
\Cref{tbl:special-sets,tbl:subsets,tbl:graphs,tbl:mat-vecs,tbl:opt}
and~\Cref{sec:notation} for the rest of the notation used throughout
the text.

When a combinatorial problem is formulated as in~\eqref{eq:chain-ILP},
the combinatorial objects are usually embedded in the (geometric)
space \(\Reals^n\) as incidence vectors, i.e., we consider the
feasible solutions to be of the form \(x = \incidvector{U}\), for
certain subsets \(U \subseteq [n]\), where for each \(i \in [n]\) the
\(i\)th coordinate of~\(\incidvector{U}\) is \(1\) if \(i \in U\)
and~\(0\) otherwise.  Having a correct ILP formulation for a
combinatorial optimization problem typically means that the feasible
solutions for~\eqref{eq:chain-ILP} are in \emph{exact} correspondence
with the combinatorial objects of interest in the problem.  One then
considers the LP relaxation~\eqref{eq:chain-LP} by dropping the
nonconvex constraint ``\(\,x \in \Integers^n\,\)''.  Note that the
``integer dual'' \eqref{eq:chain-ILD} is obtained from the
dual~\eqref{eq:chain-LD} of~\eqref{eq:chain-LP} by adding back the
nonconvex constraint ``\(\,y \in \Integers^m\,\)'' of the same form.

When embedding combinatorial objects into matrix space \(\Sym{n}\) for
an SDP formulation, one may embed a subset \(U \subseteq [n]\) as the
rank-one matrix \(X = \oprodsym{\incidvector{U}} \in \Psd{n}\).  It is
also common to use rank-one matrices arising from signed incidence
vectors, e.g., \(X = \oprodsym{s_U}\) where \(s_U =
2\incidvector{U}-\ones \in \set{\pm1}^n\) for some \(U \subseteq
[n]\).  (We~shall argue later that there is a ``better'' embedding,
which we shall adopt.)  One then obtains the following optimization
problems, partially mimicking the chain from~\Cref{thm:LP-chain}:
\begin{subequations}
  \label{eq:chain0}
  \begin{align}
    \label{eq:chain0-ISDP}
    &
    \sup\setst[\big]{
      \iprod{C}{X}
    }{
      \iprod{A_i}{X} \leq b_i\, \forall i \in [m],\,
      X \in \Psd{n},\,
      \rank(X) = 1
    }
    \\
    \label{eq:chain0-SDP}
    & \qquad\qquad\qquad\qquad
    \leq
    \sup\setst[\big]{
      \iprod{C}{X}
    }{
      \iprod{A_i}{X} \leq b_i\, \forall i \in [m],\,
      X \in \Psd{n}
    }
    \\
    \label{eq:chain0-SDD}
    & \qquad\qquad\qquad\qquad\qquad\qquad
    \leq
    \inf\setst[\big]{
      \iprodt{b}{y}
    }{
      y \in \Reals_+^m,\,
      {\textstyle\sum_{i=1}^m} y_i A_i - C \in \Psd{n}
    },
  \end{align}
\end{subequations}
where \(A_1,\dotsc,A_m,C \in \Sym{n}\) and \(b \in \Reals^m\).  Here
usually the feasible solutions for~\eqref{eq:chain0-ISDP} correspond
\emph{exactly} to the combinatorial objects of interest, as is the
case for~\eqref{eq:chain-ILP}.  Similarly as in~\Cref{thm:LP-chain},
the SDP relaxation~\eqref{eq:chain0-SDP} is obtained
from~\eqref{eq:chain0-ISDP} by dropping the nonconvex constraint
``\(\,\rank(X) = 1\,\)'', \eqref{eq:chain0-SDD} is the SDP dual
of~\eqref{eq:chain0-SDP}, and the last inequality is SDP Weak Duality.
There are many instances of the chain~\eqref{eq:chain0} in the
literature; see, e.g., \cite{GroetschelLS93a, GoemansW95a,
  Nesterov98a}.  Some of this work is in copositive programming (see,
for~instance, \cite{Burer09a} and the references therein).

Conspicuously missing from~\eqref{eq:chain0} is a fourth optimization
problem, that~is, an ``integer dual SDP'' corresponding
to~\eqref{eq:chain-ILD}.  In~fact, it is not even clear what the right
notion of integrality is for~\eqref{eq:chain0-SDD}, i.e., which
nonconvex constraint to add to~\eqref{eq:chain0-SDD} to obtain a
sensible combinatorial problem.  One could argue that we may just add
back the nonconvex constraint from~\eqref{eq:chain0-ISDP}, by
requiring the dual slack \({\textstyle\sum_{i=1}^m} y_i A_i - C\) to
have rank one, and it might also make sense to require the
vector~\(y\) to be integral.  Unfortunately, as we describe in
\Cref{sec:sdp-dual-int}, the ``integer dual SDP'' thus obtained is not
very satisfactory: whereas it can be made to generalize the
corresponding notion for LPs, it fails to provide sensible ``integer
duals'' for the SDP formulations of some of the most classical
combinatorial problems, namely the Lovász theta function for the
stable set problem and the Max~Cut~SDP\null.  Thus, we require our
notion of ``integrality constraints in the dual'' to provide
meaningful combinatorial min-max theorems at least for Max~Cut~SDP and
more importantly, for SDP formulations of the Lovász theta function.

In the late seventies, Lovász~\cite{Lovasz79a} solved a problem in
information theory by introducing the theta~function; this was one of
the earliest applications of semidefinite programming to combinatorial
optimization.  The~theta~function of a graph, which can be computed
efficiently (to within any desired precision), lies sandwiched between
its stability and clique-covering numbers; these latter parameters are
NP-hard to approximate~\cite{LundY94a, AroraLMSS98a, Hastad99a}, let
alone compute.  More importantly, there is a rich and elegant duality
theory centred around the theta function (see, e.g.,
\cite{CarliT17a}) and it has been used in many different
areas~\cite{FeigeL92a,Goemans97a,Simonyi01a,Lovasz03a}.  This rich and
elegant duality theory justifies why we take the underlying SDPs as
the main test case for any generalization of TDI theory.  The other
SDP mentioned above, the Max~Cut~SDP, was famously exploited in a
breakthrough approximation algorithm and its analysis by Goemans and
Williamson~\cite{GoemansW95a} and helped popularize SDP formulations
in the discrete optimization and theoretical computer science
communities.  This SDP remains fundamental due to its connections with
pioneering work in complexity theory related to the unique games
conjecture (see~\cite{Trevisan12a,KhotV15a}) and sums of
squares~\cite{BarakS14a}.

In this paper, we introduce a notion of integrality for SDPs that
\begin{enumerate}[(i)]
\item generalizes the usual rank-one constraint in primal SDPs;
\item allows us to extend the chain~\eqref{eq:chain0} so as to
  generalize \Cref{thm:LP-chain} for LPs in the natural, diagonal
  embedding of \(Ax \leq b\) into matrix space~\(\Sym{n}\);
\item is primal-dual symmetric;
\item yields sensible ``integer duals'' for the SDPs for the Lovász
  theta function and the Max Cut SDP.
\end{enumerate}
We use this integrality condition for SDPs to define the notion of
\emph{total dual integrality} for the defining system of an~SDP.  We
connect this new notion to~\Cref{cor:Hoffman} by extending the latter
to compact convex sets, using basic tools from convex analysis and ILP
theory, such as the Gomory-Chvátal closure.  We~prove that the total
dual integrality of an SDP formulation for the Lovász theta function
is equivalent to the underlying graph being perfect.  We also study a
close relative of TDIness for the Max Cut SDP and relate it to
bipartiteness of the underlying graph.  Along the way, we discuss an
intermediate generalization of TDIness for LPs in terms of lifted
(extended) formulations.  Finally, we discuss future research
directions along these lines, inspired by integrality (and other
exactness) notions in convex optimization.

In order to achieve this, several obstacles must be overcome.  First,
we must choose a specific format for SDPs that makes it natural to
work with integral solutions; that~is, we must settle for a specific
embedding of combinatorial objects into matrix space.  Note that this
is not an issue in the LP case, where incidence vectors are the most
natural choice of embedding.  We solve this partially by restricting
ourselves to binary integer programs, i.e., we only deal with integer
variables taking values in~\(\set{0,1}\); this is the usual case in
combinatorial optimization.  Our choice of embedding and our focus on
the combinatorial aspects of the dual SDP require us to rewrite SDP
constraints in a slightly unusual way; this happens because other
works in the literature do not focus on integrality for the dual SDP.
Finally, SDP formulations for combinatorial problems are usually
lifted formulations, so we must generalize the (algebraic) notion of
TDIness to these (geometric) extended formulations.

Some previous works on abstract notions of duality in the context of
integer programming are related to this one; we
highlight~\cite{CarvalhoT89a,RyanT94a}.

\bgroup
\renewcommand{\arraystretch}{1.2}
\begin{table}[!ht]
  \caption{Notation for special sets.}
  \centering
  \begin{tabular}{r c p{12cm} }
    \toprule
    \(\Integers_+\)
    & \(\coloneqq\) &
    \(\setst{x \in \Integers}{x \geq 0}\), the set of nonnegative integers
    \\
    \(\Reals_+\)
    & \(\coloneqq\) &
    \(\setst{x \in \Reals}{x \geq 0}\), the set of nonnegative reals
    \\
    \(\Reals_{++}\)
    & \(\coloneqq\) &
    \(\setst{x \in \Reals}{x > 0}\), the set of positive reals
    \\
    \([n]\)
    & \(\coloneqq\) &
    \(\set{1,\dotsc,n}\) for each \(n \in \Naturals\)
    \\
    \(\Sym{V}\)
    & \(\coloneqq\) &
    \(\setst{X \in \Reals^{V \times V}}{X = X^{\transp}}\),
    the real vector space of symmetric \(V \times V\) matrices
    \\
    \(\Psd{V}\)
    & \(\coloneqq\) &
    \(\setst{X \in \Sym{V}}{\qform{X}{h} \geq 0\,\forall h \in \Reals^V}\),
    the cone of positive semidefinite matrices in~\(\Sym{V}\)
    \\[2pt]
    \(\Symhat{\Vspc}\)
    & \(\coloneqq\) &
    \(\Sym{\zlift{V}}\), the lifted matrix space; see \eqref{eq:4}
    \\
    \(\Psdhat{\Vspc}\)
    & \(\coloneqq\) &
    \(\Psd{\zlift{V}}\), the semidefinite cone in the lifted space; see \eqref{eq:4}
    \\[2pt]
    \(\Symnonneg{V}\)
    & \(\coloneqq\) &
    \(\setst{X \in \Sym{V}}{X \geq 0}\),
    the cone of entrywise nonnegative matrices in~\(\Sym{V}\)
    \\
    \bottomrule
  \end{tabular}
  \label{tbl:special-sets}
\end{table}
\egroup                         % \arraystretch

\bgroup
\renewcommand{\arraystretch}{1.2}
\begin{table}[!ht]
  \caption{Notation for sets.}
  \centering
  \begin{tabular}{r c p{12cm} }
    \toprule
    \(\Powerset{V}\)
    & \(\coloneqq\) &
    the power set of~\(V\)
    \\[4pt]
    \(\tbinom{V}{k}\)
    & \(\coloneqq\) &
    \(\setst{U \subseteq V}{\card{U} = k}\), the collection of \emph{\(k\)-subsets} of~\(V\)
    \\[4pt]
    \(\tbinom{V}{i \in}\)
    & \(\coloneqq\) &
    the collection of subsets of~\(V\) that contain \(i \in V\)
    \\[4pt]
    \(\tbinom{V}{ij \subseteq}\)
    & \(\coloneqq\) &
    the collection of subsets of~\(V\) that contain \(i \in V\) and \(j \in V\)
    \\[4pt]
    \(f\mathord{\restriction}_U\)
    & \(\coloneqq\) &
    the restriction of the function \(f \colon V \to W\) to \(U \subseteq V\)
    \\
    \(ij\)
    & \(\coloneqq\) &
    \(\set{i,j}\) or \((i,j)\),
    whichever parses
    \\
    \bottomrule
  \end{tabular}
  \label{tbl:subsets}
\end{table}
\egroup                         % \arraystretch

\bgroup
\renewcommand{\arraystretch}{1.2}
\begin{table}[!ht]
  \caption{Notation for a graph \(G = (V,E)\).}
  \centering
  \begin{tabular}{r c p{12cm} }
    \toprule
    \(\overline{G}\)
    & \(\coloneqq\) &
    \((V,\overline{E})\) where \(\overline{E} \coloneqq \tbinom{V}{2}
    \setminus E\), i.e., the \emph{complement} of~\(G\)
    \\
    \(G[U]\)
    & \(\coloneqq\) &
    \(\paren[\big]{U,E\cap\tbinom{U}{2}}\), i.e.,
    the subgraph of \(G\) \emph{induced} by \(U \subseteq V\)
    \\
    \(K_U\)
    & \(\coloneqq\) &
    the complete graph on vertex set \(U\)
    \\
    \(\Ascr_G\)
    & \(\coloneqq\) &
    \(\setst{A \in \Sym{V}}{A_{ij} \neq 0 \implies ij \in E}\),
    the set of \emph{weighted adjacency matrices} of~\(G\)
    \\
    \(\Kcal(G)\)
    & \(\coloneqq\) &
    the set of cliques of~\(G\)
    \\
    \(\omega(G)\)
    & \(\coloneqq\) &
    \(\max\setst{\card{K}}{K \in \Kcal(G)}\), i.e.,
    the \emph{clique number} of~\(G\)
    \\
    \(\chi(G)\)
    & \(\coloneqq\) &
    \(\min\setst{\card{\Pcal}}{\text{\(\Pcal\) a partition of~\(V\) into
        stable sets}}\), i.e.,
    the \emph{chromatic number} of~\(G\)
    \\
    \(\alpha(G,w)\)
    & \(\coloneqq\) &
    the (weighted) stability number of~\(G\) with weights \(w \colon V \to
    \Reals\); see~\eqref{eq:def-alpha}
    \\
    \(\theta(G,w)\)
    & \(\coloneqq\) &
    the Lovász theta number of~\(G\) with weights \(w \colon V \to
    \Reals\); see~\eqref{eq:theta-SDP}
    \\
    \(\theta'(G,w)\)
    & \(\coloneqq\) &
    the variant of~\(\theta(G,w)\) defined in~\eqref{eq:theta'-SDP}
    \\
    \(\theta^+(G,w)\)
    & \(\coloneqq\) &
    the variant of~\(\theta(G,w)\) defined in~\eqref{eq:theta+-SDP}
    \\
    \(\overline{\chi}(G,w)\)
    & \(\coloneqq\) &
    the (weighted) clique covering number of~\(G\) with weights \(w \colon V \to
    \Reals\); see~\eqref{eq:clique-covering}
    \\
    \(\Laplacian{G}\)
    & \(\coloneqq\) &
    the weighted Laplacian of~\(G\); see~\eqref{eq:def-Laplacian}
    \\
    \(\delta(U)\)
    & \(\coloneqq\) &
    the cut in~\(G\) with shore \(U \subseteq V\); see~\eqref{def:cut}
    \\
    \(\delta(i)\)
    & \(\coloneqq\) &
    \(\delta(\set{i})\) for a vertex \(i \in V\)
    \\
    \(N(i)\)
    & \(\coloneqq\) &
    \(\setst{j \in V}{ij \in \delta(i)}\), i.e., the set of neighbors
    of~\(i\) in~\(G\)
    \\
    \bottomrule
  \end{tabular}
  \label{tbl:graphs}
\end{table}
\egroup                         % \arraystretch

\bgroup
\renewcommand{\arraystretch}{1.2}
\begin{table}[!ht]
  \caption{Notation for vectors and matrices.}
  \centering
  \begin{tabular}{r c p{12cm} }
    \toprule
    \(\setst{e_i}{i \in V}\)
    & \(\coloneqq\) &
    the canonical basis of \(\Reals^V\)
    \\
    \(\trace(A)\)
    & \(\coloneqq\) &
    \(\sum_{i \in V} A_{ii}\),
    the \emph{trace} of \(A \in \Reals^{V \times V}\)
    \\
    \(\iprod{X}{Y}\)
    & \(\coloneqq\) &
    \(\trace(XY^{\transp})\),
    the \emph{(trace) inner-product} on~\(\Reals^{V \times V}\)
    \\
    \(\Acal^*\)
    & \(\coloneqq\) &
    the adjoint of a linear map \(\Acal\) between real inner-product spaces
    \\
    \(\diag\)
    & \(\coloneqq\) &
    the linear map from \(\Reals^{V \times V}\) to \(\Reals^V\)
    that extracts the diagonal of a matrix
    \\
    \(\Diag\)
    & \(\coloneqq\) &
    the adjoint of \(\diag\) from \(\Reals^{V}\) to \(\Reals^{V \times V}\),
    which builds diagonal matrices
    \\
    \(X[U]\)
    & \(\coloneqq\) &
    \(X\mathord{\restriction}_{U \times U} \in \Reals^{U \times U}\), i.e.,
    the \emph{principal submatrix} of \(X \in \Reals^{V \times V}\) indexed
    by \(U \subseteq V\)
    \\
    \(\ones\)
    & \(\coloneqq\) &
    the vector of all-ones in the appropriate space
    \\
    \(\incidvector{U}\)
    & \(\coloneqq\) &
    the \emph{incidence vector} of \(U \subseteq V\) in
    \(\set{0,1}^V\); see~\eqref{eq:2}
    \\
    \(I\)
    & \(\coloneqq\) &
    the identity matrix in appropriate dimension
    \\
    \(\geq\)
    & \(\coloneqq\) &
    the nonnegative partial order on \(\Reals^{V \times W}\), i.e.,
    \(A \geq B\) if \(A_{ij} \geq B_{ij}\) \(\forall (i,j) \in V \times W\)
    \\[4pt]
    \(\succeq\)
    & \(\coloneqq\) &
    the Löwner partial order on \(\Sym{V}\), i.e.,
    \(A \succeq B \iff A - B \in \Psd{V}\)
    \\
    \(\sqrt{w}\) & \(\coloneqq\) & the componentwise square root of
    \(w \in \Reals_+^V\), i.e., \((\sqrt{w}\,)_i \coloneqq
    \sqrt{w_i}\) for every \(i \in V\)
    \\
    \(x \oplus y\)
    & \(\coloneqq\) &
    the direct sum of vectors \(x \in \Reals^V\) and \(y \in \Reals^W\)
    \\
    \(x \hprod y\) & \(\coloneqq\) & the \emph{Hadamard product} of
    \(x, y \in \Reals^V\), i.e., \((x \hprod y)_i \coloneqq
    x_i y_i\) for every \(i \in V\)
    \\
    \(\supp(x)\)
    & \(\coloneqq\) &
    \(\setst{i \in V}{x_i \neq 0}\),
    the \emph{support} of \(x \in \Reals^V\)
    \\
    \(\Symmetrize(A)\)
    & \(\coloneqq\) &
    \(\tfrac{1}{2}(A+A^{\transp})\),
    the orthogonal projection of \(A \in \Reals^{V \times V}\) into \(\Sym{V}\)
    \\
    \bottomrule
  \end{tabular}
  \label{tbl:mat-vecs}
\end{table}
\egroup                         % \arraystretch

\bgroup
\renewcommand{\arraystretch}{1.2}
\begin{table}[!ht]
  \caption{Notation for (convex) optimization, with \(\Cscr\) a convex
    subset of an Euclidean space~\(\mathbb{E}\).}
  \centering
  \begin{tabular}{r c p{12cm} }
    \toprule
    \(\conv(X)\)
    & \(\coloneqq\) &
    the convex hull of \(X \subseteq \mathbb{E}\)
    \\
    \(\suppf{\Cscr}{w}\)
    & \(\coloneqq\) &
    the support function of~\(\Cscr\) at \(w \in \mathbb{E}\);
    see~\eqref{def:suppf}
    \\
    \(\GomoryChvatal(\Cscr)\)
    & \(\coloneqq\) &
    the Gomory-Chvátal closure of~\(\Cscr\);
    see~\eqref{def:Gomory-Chvatal}
    \\
    \(\Cscr_I\)
    & \(\coloneqq\) &
    \(\conv(\Cscr \cap \Integers^n)\),
    i.e., the \emph{integer hull} of~\(\Cscr\)
    \\
    \(\Cscr^{\circ}\)
    & \(\coloneqq\) &
    \(\setst{y \in \mathbb{E}}{\iprod{y}{x} \leq 1\,\forall x \in \Cscr}\)
    i.e., the \emph{polar} of~\(\Cscr\)
    \\
    \bottomrule
  \end{tabular}
  \label{tbl:opt}
\end{table}
\egroup                         % \arraystretch

\subsection{Notation}
\label{sec:notation}

We use Iverson's notation: for a predicate \(P\), we denote
\begin{equation}
  \label{eq:1}
  [P] \coloneqq
  \begin{dcases*}
    1 & if \(P\) holds;\\
    0 & otherwise.
  \end{dcases*}
\end{equation}
When \(P\) is false, \([P]\) is considered ``strongly zero'', in the
sense that \([P]\) is allowed to multiply a meaningless term and the
result will be zero.  The simplest example of this is that \([\alpha >
0]\tfrac{1}{\alpha}\) is taken to be \(0\) if \(\alpha = 0\).

Throughout the text, \(V\) should be considered a finite set, usually
taken to be the vertex set of a graph \(G = (V,E)\); \emph{all graphs
  in this paper are simple}.  The \emph{incidence vector} of \(U
\subseteq V\) is \(\incidvector{U} \in \set{0,1}^V\) defined as
\begin{equation}
  \label{eq:2}
  (\incidvector{U})_i \coloneqq [i \in U]
  \qquad
  \forall i \in V.
\end{equation}
The rest of our notation is mostly standard, and it can be looked up
in~\Cref{tbl:special-sets,tbl:subsets,tbl:graphs,tbl:mat-vecs,tbl:opt}.

\subsection{Organization}
\label{sec:org}

The rest of this text is organized as follows.  We discuss dual
integrality constraints for SDPs in~\Cref{sec:sdp-dual-int}, including
drawbacks of the rank-one constraint usually added to the primal SDP
(further drawbacks are postponed to~\Cref{sec:rank-maxcut}
and~\Cref{sec:rank-theta-tr}), as~well as embedding issues.  There, we
show that our notion of dual integrality befits nicely with the Lovász
theta function.  In~\Cref{sec:int-convex}, we~generalize
\Cref{cor:Hoffman}, which motivates us to define a notion of
\emph{total} dual integrality for SDPs in~\Cref{sec:sdp-tdi}; we show
that the latter is sufficient for \emph{primal} integrality.
In~\Cref{sec:theta-tdi}, we characterize total dual integrality for
formulations of the Lovász theta function and we study dual
integrality for the MaxCut SDP with nonnegative weight functions
in~\Cref{sec:maxcut}.  (Some of the limitations in our theory as
applied to the MaxCut SDP, related the use of nonnegative weight
functions, are discussed in~\Cref{sec:maxcut-nonneg}.)  We conclude
our paper with several open problems and future research directions
in~\Cref{sec:conclusion}.

\section{Fundamental Framework and Integrality Constraint for Dual
  SDP}
\label{sec:sdp-dual-int}

We discuss in \Cref{sec:drawbacks-rank-one} below the shortcomings of
the rank constraint as an ``integrality constraint'' for the dual
SDP~\eqref{eq:chain0-SDD}, and propose a replacement
in~\Cref{sec:integrality-constraint}.  Along the discussion, a few,
somewhat unusual choices will be made, which are not normally done in
the SDP literature; e.g., we are careful when writing linear
inequalities of the form \(\iprod{A}{X} \leq \beta\) on a matrix
variable~\(X\) with an \emph{integral symmetric} matrix~\(A\) and
\emph{integer}~\(\beta\).  The reason we insist on symmetry of~\(A\)
is to properly set up the dual SDP, and we want~\(A\) and~\(\beta\) to
be integral so as to simplify combinatorial interpretation of the
linear system; this is also the case when one studies the ILP chain
from~\Cref{thm:LP-chain} in the context of classical TDIness theory.

\subsection{Drawbacks of the Rank-one Constraint as a Dual Integrality
  Constraint}
\label{sec:drawbacks-rank-one}

In order to discuss integrality constraints for SDPs, we must first
choose a standard form to embed combinatorial objects (e.g., subsets
of some finite ground set~\(V\)) into matrix space \(\Sym{V}\).  The
format we shall choose actually embeds subsets of a finite set~\(V\)
as matrices in \(\Sym{\zlift{V}}\), i.e., the index set has one extra
element, which we call~\(0\), \emph{assumed throughout not to be
  in~\(V\)}.  Each subset \(U\) of~\(V\) is embedded as the rank-one
matrix
\begin{equation}
  \label{eq:3}
  \Xhat
  \coloneqq
  \oprodsym{
    \begin{bmatrix}
      1               \\
      \incidvector{U} \\
    \end{bmatrix}
  }
  =
  \begin{bmatrix}
    1               & \incidvector{U}^{\transp}  \\[3pt]
    \incidvector{U} & \oprodsym{\incidvector{U}} \\
  \end{bmatrix}
  \in
  \Psd{\zlift{V}};
\end{equation}
as a convention, \emph{we decorate matrices in this lifted space with
  a hat}, e.g., \(\Xhat\) in~\eqref{eq:3}.  Similarly, since we use
the lifted matrix space so often, we shall abbreviate
\begin{equation}
  \label{eq:4}
  \Symhat{\Vspc} \coloneqq \Sym{\zlift{V}}
  \qquad\text{and}\qquad
  \Psdhat{\Vspc} \coloneqq \Psd{\zlift{V}},
\end{equation}
and we also decorate subsets of~\(\Symhat{\Vspc}\) with a hat, e.g.,
\(\Cscrhat \subseteq \Symhat{\Vspc}\).  By writing any matrix~\(\Xhat\)
from~\eqref{eq:3} in the form
\begin{equation}
  \label{eq:5}
  \Xhat
  =
  \begin{bmatrix}
    1 & x^{\transp}\, \\
    x & X             \\
  \end{bmatrix}
  \in \Symhat{\Vspc},
\end{equation}
with \(X \in \Sym{V}\), one sees that it satisfies the linear constraints
\begin{equation}
  \label{eq:6}
  \Xhat_{00} = 1
  \qquad
  \text{and}
  \qquad
  x_j = X_{jj} \geq 0
  \quad
  \forall j \in V,
\end{equation}
which we shall write as
\begin{subequations}
  \label{eq:SDP-01}
  \begin{alignat}{3}
    \iprod{\oprodsym{e_0}}{\Xhat} & = 1, & \qquad &
    \label{eq:SDP-01-00}
    \\
    \iprod[\big]{
      2\Symmetrize(\oprod{e_j}{(e_j-e_0)})
    }{
      \Xhat
    } & = 0 & \qquad & \forall j \in V,
    \label{eq:SDP-01-diag}
    \\
    \iprod[\big]{
      \oprodsym{e_j}
    }{
      \Xhat
    } & \geq 0 & \qquad & \forall j \in V.
    \label{eq:SDP-01-nonneg}
  \end{alignat}
\end{subequations}
The constraints~\eqref{eq:SDP-01}, together with the constraint
\(\rank(\Xhat) = 1\), ensure that \(\Xhat\) has the form~\eqref{eq:3}
for some \(U \subseteq V\).  Throughout the rest of the text, one may
think that every system of linear inequalities on~\(\Xhat\) arising
from combinatorial problems includes the
constraints~\eqref{eq:SDP-01}, just as one usually considers the
linear constraints \(Ax \leq b\), \(x \geq 0\)
from~\eqref{eq:chain-ILP} to include \(0 \leq x \leq \ones\).

Another constraint satisfied by \(\Xhat\) of the form~\eqref{eq:3},
using the notation of~\eqref{eq:5}, is \(X = \Xhat[V] \geq 0\).
Sometimes it will make sense to add this extra constraint
to~\eqref{eq:SDP-01}, leading to the following constraints:
\begin{subequations}
  \label{eq:SDP-01geq}
  \begin{alignat}{3}
    \iprod{\oprodsym{e_0}}{\Xhat} & = 1, & \qquad &
    \label{eq:SDP-01geq-00}
    \\
    \iprod[\big]{
      2\Symmetrize(\oprod{e_j}{(e_j-e_0)})
    }{
      \Xhat
    } & = 0 & \qquad & \forall j \in V,
    \label{eq:SDP-01geq-diag}
    \\
    \iprod{\oprodsym{e_j}}{\Xhat}
    & \geq 0 & \qquad & \forall j \in V,
    \label{eq:SDP-01geq-geq1}
    \\
    \iprod[\big]{2\Symmetrize(\oprod{e_i}{e_j})}{\Xhat}
    & \geq 0 & \qquad & \forall i,j \in V,\text{ such that }i \neq j.
    \label{eq:SDP-01geq-geq2}
  \end{alignat}
\end{subequations}

The embedding described above is used in some formulations of the
theta function (see~\cite{GroetschelLS93a,Schrijver03a}), in~the
lift-and-project hierarchies of Lovász and Schrijver~\cite{LovaszS91a}
and Lasserre~\cite{Lasserre01a} (also see Laurent~\cite{Laurent03a}), and in
copositive formulations for mixed integer linear programs by
Burer~\cite{Burer09a}.

A simple, natural way to obtain an SDP relaxation for~\eqref{eq:chain-ILP} is to formulate
\begin{subequations}
  \label{eq:LP-as-SDP}
  \begin{alignat}{3}
    \text{Maximize} \quad & \iprod{\Diag(0 \oplus c)}{\Xhat} & \\[0pt]
    \label{eq:LP-as-SDP-embedding}
    \text{subject to}\quad & \text{\(\Xhat\)
      satisfies~\eqref{eq:SDP-01geq} with \(V \coloneqq [n]\)},
    & \\[0pt]
    & \iprod{
      \Diag(-b_i \oplus A^{\transp}e_i)
    }{
      \Xhat
    } \leq 0
    & \qquad & \forall i \in [m], \\[0pt]
    & \Xhat \in \Psdhat{n}. &
  \end{alignat}
\end{subequations}
In this case, to obtain an exact reformulation
of~\eqref{eq:chain-ILP}, corresponding to~\eqref{eq:chain0-ISDP}, one
may add the rank constraint \(\rank(\Xhat) \leq 1\)
to~\eqref{eq:LP-as-SDP}.  Note, however, that~\eqref{eq:LP-as-SDP} is
a potentially tighter relaxation for~\eqref{eq:chain-ILP}
than~\eqref{eq:chain-LP}.  The~SDP~dual to~\eqref{eq:LP-as-SDP} may be
written as
\begin{subequations}
  \label{eq:LP-as-SDP-dual}
  \begin{alignat}{3}
    \text{Minimize} \quad & \eta & \\[0pt]
    \text{subject to}\quad &
    \begin{bmatrix}
      \eta & -u^{\transp}  \\
      -u   & \Diag(2u) - Z \\
    \end{bmatrix}
    + \sum_{i \in [m]} y_i
    \begin{bmatrix}
      -b_i & 0^{\transp}           \\
      0    & \Diag(A^{\transp}e_i) \\
    \end{bmatrix}
    - \Shat =
    \begin{bmatrix}
      0 & 0^{\transp} \\
      0 & \Diag(c)    \\
    \end{bmatrix},
    \label{eq:7}
    \\
    &
    \Shat \in \Psdhat{n},\,
    \eta \in \Reals,\,
    u \in \Reals^n,\,
    y \in \Reals_+^m,\,
    Z \in \Symnonneg{n}. &
  \end{alignat}
\end{subequations}
If~\eqref{eq:LP-as-SDP-embedding} is weakened to ``\(\Xhat\)
satisfies~\eqref{eq:SDP-01}'', again with \(V = [n]\), then the
variable \(Z\) in~\eqref{eq:LP-as-SDP-dual} would be required to take
the form \(Z = \Diag(z)\) for some \(z \in \Reals_+^n\).

It is easy to check that, if \(y\) is feasible
in~\eqref{eq:chain-ILD}, then \((\eta,Z,y,\Shat,u) \coloneqq
(\iprodt{b}{y},\Diag(A^{\transp}y - c),y,0,0)\) is feasible
in~\eqref{eq:LP-as-SDP-dual} with the same objective value as that
of~\(y\) in \eqref{eq:chain-ILD}.  Thus, the rank constraint
\(\rank(\Shat) \leq 1\) seems reasonable as an integrality constraint
for~\eqref{eq:LP-as-SDP-dual}.  In~fact, we may even consider the
tighter rank constraint \(\rank(\Shat) = 1\), as long as we allow
\(\eta\) to take on real values (rather than only integral ones),
possibly at the cost of nonattainment.

Now we move on to the SDP formulation for~\(\theta\), the Lovász theta
function.  In~fact, we will also consider variations of~\(\theta\)
usually denoted by~\(\theta'\) and~\(\theta^+\), which were introduced
independently by McEliece, Rodemich, and Rumsey~\cite{McElieceRR78a}
and Schrijver~\cite{Schrijver79a}, and by Szegedy~\cite{Szegedy94a},
respectively.  \emph{We shall show that the rank constraint is very
  inadequate for the dual SDP in this setting, for all three
  variants.}

Let \(G = (V,E)\) be a graph and let \(w \colon V \to \Reals\).  There
are several equivalent formulations for the \emph{weighted theta
  number}~\(\theta(G;w)\) of~\(G\) with weights~\(w\) (see, e.g.,
\cite{CarliT17a}), and similarly for its variations~\(\theta'(G;w)\)
and~\(\theta^+(G;w)\).  In view of our choice of format for SDPs that
includes the constraints~\eqref{eq:SDP-01}, we shall use the following
formulation for \(\theta(G;w)\):
\begin{subequations}
  \label{eq:theta-SDP}
  \begin{alignat}{3}
    \text{Maximize} \quad & \iprod{\Diag(0 \oplus w)}{\Xhat} & \\[0pt]
    \label{eq:8}
    \text{subject to}\quad & \text{\(\Xhat\) satisfies~\eqref{eq:SDP-01}}, & \\[0pt]
    \label{eq:9}
    & \iprod{2\Symmetrize(\oprod{e_i}{e_j})}{\Xhat} = 0 & \qquad & \forall ij \in E, \\[0pt]
    & \Xhat \in \Psdhat{\Vspc}. &
  \end{alignat}
\end{subequations}
Note that, if \(U \subseteq V\) is \emph{stable} in~\(G\), i.e., no
edge of~\(G\) has both endpoints in~\(U\), then the matrix \(\Xhat\)
defined in~\eqref{eq:3} is feasible in~\eqref{eq:theta-SDP} with
objective value \(\iprodt{w}{\incidvector{U}} = \sum_{u \in U} w_u\).

We formulate \(\theta'(G,w)\) as
\begin{subequations}
  \label{eq:theta'-SDP}
  \begin{alignat}{3}
    \text{Maximize} \quad & \iprod{\Diag(0 \oplus w)}{\Xhat} & \\[0pt]
    \text{subject to}\quad & \text{\(\Xhat\) satisfies~\eqref{eq:SDP-01}}, & \\[0pt]
    & \iprod{2\Symmetrize(\oprod{e_i}{e_j})}{\Xhat} = 0 & \qquad & \forall ij \in E, \\[0pt]
    & \iprod{2\Symmetrize(\oprod{e_i}{e_j})}{\Xhat} \geq 0 & \qquad & \forall ij \in \overline{E}, \\[0pt]
    & \Xhat \in \Psdhat{\Vspc}, &
  \end{alignat}
\end{subequations}
where \(\overline{E} \coloneqq \tbinom{V}{2} \setminus E\), and
\(\theta^+(G,w)\) is formulated as
\begin{subequations}
  \label{eq:theta+-SDP}
  \begin{alignat}{3}
    \text{Maximize} \quad & \iprod{\Diag(0 \oplus w)}{\Xhat} & \\[0pt]
    \text{subject to}\quad & \text{\(\Xhat\) satisfies~\eqref{eq:SDP-01}}, & \\[0pt]
    & \iprod{2\Symmetrize(\oprod{e_i}{e_j})}{\Xhat} \leq 0 & \qquad & \forall ij \in E, \\[0pt]
    & \Xhat \in \Psdhat{\Vspc}. &
  \end{alignat}
\end{subequations}

The dual SDP of~\eqref{eq:theta'-SDP} is:
\begin{subequations}
  \label{eq:SDP-theta'-dual}
  \begin{alignat}{3}
    \text{Minimize} \quad & \eta & \\[0pt]
    \text{subject to}\quad &
    \begin{bmatrix}
      \eta & -u^{\transp}    \\
      -u   & \Diag(2u-z) \\
    \end{bmatrix}
    + \sum_{ij \in \tbinom{V}{2}} y_{ij}
    \begin{bmatrix}
      0 & 0^{\transp}                    \\
      0 & 2\Symmetrize(\oprod{e_i}{e_j}) \\
    \end{bmatrix}
    - \Shat =
    \begin{bmatrix}
      0 & 0^{\transp} \\
      0 & \Diag(w)    \\
    \end{bmatrix},
    \label{eq:SDP-theta'-dual-eq}
    \\
    &
    \Shat \in \Psdhat{\Vspc},\,
    \eta \in \Reals,\,
    u \in \Reals^V,\,
    z \in \Reals_+^V,\,
    y \in \Reals^E \oplus -\Reals_+^{\overline{E}}. &
  \end{alignat}
\end{subequations}
Note that the dual for the formulation~\eqref{eq:theta-SDP}
of~\(\theta(G;w)\) is similar, except that it requires
\(y\mathord{\restriction}_{\overline{E}} = 0\), and the dual for the
formulation~\eqref{eq:theta+-SDP} of~\(\theta^+(G;w)\) furthermore has
the sign constraint \(y\mathord{\restriction}_E \geq 0\).

We claim that,
\begin{equation}
  \label{eq:10}
  \text{
    if \eqref{eq:SDP-theta'-dual} has a feasible solution with
    \(\rank(\Shat) \leq 1\) and \(w \in \Reals_{++}^V\), then \(G =
    K_V\).
  }
\end{equation}
Indeed, suppose that \(\rank(\Shat) \leq 1\).  We have \(\eta > 0\)
by weak duality, so \(\rank(\Shat)=1\) and
\[
\Shat =
\begin{bmatrix}
  \eta & -u^{\transp}                \\
  -u   & \tfrac{1}{\eta}\oprodsym{u} \\
\end{bmatrix}.
\]
Then,
\begin{equation}
  \label{eq:11}
  \Diag(2u-z-w) + \sum_{ij \in \tbinom{V}{2}} 2 y_{ij}
  \Symmetrize(\oprod{e_i}{e_j}) = \tfrac{1}{\eta} \oprodsym{u}.
\end{equation}
By applying~\(\diag\) to both sides of~\eqref{eq:11}, we get \(2u-z-w
= u \hprod u\) so \(2u = (u \hprod u) + z + w \in \Reals_{++}^V\).
Next let \(i,j \in V\) be distinct.  The \(ij\)th entry
of~\eqref{eq:11} is \(y_{ij} = \tfrac{1}{\eta} u_i u_j > 0\) whence
\(ij \in E\).  This proves~\eqref{eq:10}.  Hence, the dual SDPs for
the formulations of all three variants of~\(\theta\) only have
feasible solutions with rank-one slacks if~\(G\) is complete.

One might argue that we have chosen an inappropriate formulation for
the rank constraint.  However, given the mandatory
constraints~\eqref{eq:SDP-01}, the formulation above is the most
natural one.  For completeness, we~show in~\Cref{sec:rank-theta-tr}
that the rank constraint is not adequate either for the more popular
formulation of the theta function with variable \(X \in \Psd{V}\) and
the trace constraint \(\trace(X) = 1\); in~\Cref{sec:rank-maxcut}, we
also treat the rank constraint for the dual of the MaxCut SDP, which
will be introduced in~\Cref{sec:maxcut}.

\subsection{An Improved Dual Integrality Constraint}
\label{sec:integrality-constraint}

In view of our adopted embedding~\eqref{eq:3}, let us draft the
complete version of the (partial) chain of
inequalities~\eqref{eq:chain0} as
\begin{subequations}
  \label{eq:chain1}
  \begin{align}
    &
    \sup\setst[\big]{
      \iprod{\Chat}{\Xhat}
    }{
      \iprod{\Ahat_i}{\Xhat} \leq b_i\, \forall i \in [m],\,
      \Xhat \in \Psdhat{n},\,
      \text{``\(\Xhat\) integral''}
    }
    \label{eq:chain1-ISDP}
    \\
    & \qquad\qquad\qquad\qquad
    \leq
    \sup\setst[\big]{
      \iprod{\Chat}{\Xhat}
    }{
      \iprod{\Ahat_i}{\Xhat} \leq b_i\, \forall i \in [m],\,
      \Xhat \in \Psdhat{n}
    }
    \label{eq:chain1-SDP}
    \\
    & \qquad\qquad\qquad\qquad\qquad\qquad
    \leq
    \inf\setst[\big]{
      \iprodt{b}{y}
    }{
      y \in \Reals_+^m,\,
      {\Shat = \textstyle\sum_{i=1}^m} y_i \Ahat_i - \Chat \in \Psdhat{n}
    }
    \label{eq:chain1-SDD}
    \\
    & \qquad\qquad\qquad\qquad\qquad\qquad\quad
    \leq
    \inf\setst[\big]{
      \iprodt{b}{y}
    }{
      y \in \Integers_+^m,\,
      {\Shat = \textstyle\sum_{i=1}^m} y_i \Ahat_i - \Chat \in \Psdhat{n},\,
      \text{``\(\Shat\) integral''}
    }.
    \label{eq:chain1-ISDD}
  \end{align}
\end{subequations}
Assume that the system \(\iprod{\Ahat_i}{\Xhat} \leq b_i,\, i \in
[m]\), includes the constraints~\eqref{eq:SDP-01}.

To define the \emph{integrality constraint}
for~\eqref{eq:chain1-ISDD}, we shall consider the dual slack \(\Shat =
\sum_{i=1}^m y_i \Ahat_i - \Chat\).
\begin{definition}
  \label{def:dual-int}
  Let \(\Shat\) be feasible in~\eqref{eq:chain1-SDD}.  We say that
  ``\(\Shat\)~is~\emph{integral}\,'' if \(\Shat\) is a sum \(\Shat =
  \sum_{k=1}^N \Shat_k\) of rank-one matrices \(\Shat_1,\dotsc,\Shat_N
  \in \Psdhat{n}\) such that, for each \(k \in [N]\), we have
  \begin{subequations}
    \label{eq:sdp-linear-int-dual}
    \begin{alignat}{3}
      \iprod{\oprodsym{e_0}}{\Shat_k} & = 1, & \qquad &
      \label{eq:sdp-linear-int-dual-0}
      \\
      \iprod[\big]{2\Symmetrize(\oprod{e_j}{(e_j+e_0)})}{\Shat_k} & = 0
      & \quad & \forall j \in V.
      \label{eq:sdp-linear-int-dual-1}
    \end{alignat}
  \end{subequations}
\end{definition}
Note that this is almost identical to the constraints
in~\eqref{eq:SDP-01}, except for the sign of~\(e_0\)
in~\eqref{eq:sdp-linear-int-dual-1}.  Equivalently, each \(\Shat_k\)
must have the form
\begin{equation*}
  \Shat_k =
  \begin{bmatrix}
    1    & -s_k^{\transp}\, \\
    -s_k & S_k              \\
  \end{bmatrix}
\end{equation*}
and satisfy \(\diag(S_k) = s_k\).  Since \(\Shat_k\) has rank-one, we
must have \(S_k = \oprodsym{s_k}\).  Hence, the condition
``\(\Shat\)~is~\emph{integral}\,'' may be interpreted with a more
combinatorial flavor as requiring \(\Shat\) to have the form
\begin{equation*}
  \Shat = \sum_{K \in \mathcal{K}}
  \oprodsym{
    \begin{bmatrix}
      -1              \\
      \incidvector{K} \\
    \end{bmatrix}
  }
\end{equation*}
for some \emph{family} (i.e., multiset) \(\mathcal{K}\) of subsets
of~\([n]\).  Denote the power set of a set~\(V\) by \(\Powerset{V}\).
By letting \(m \colon \Powerset{V} \to \Integers_+\) denote the
multiplicity of each subset \(K \subseteq V \coloneqq [n]\)
in~\(\mathcal{K}\), we may rewrite the condition
``\(\Shat\)~is~integral\,'' as
\begin{equation}
  \label{eq:dual-int}
  \Shat = \sum_{A \subseteq V} m_A
  \begin{bmatrix}
    1                & -\incidvector{A}^{\transp} \\[3pt]
    -\incidvector{A} & \oprodsym{\incidvector{A}} \\
  \end{bmatrix}
  \text{ for some }
  m \colon \Powerset{V} \to \Integers_+.
  \tag{D\(\Integers\)}
\end{equation}

The \emph{integrality constraint} for~\eqref{eq:chain1-ISDP} is
analogous:
\begin{definition}
  \label{def:primal-int}
  Let \(\Xhat\) be feasible in~\eqref{eq:chain1-SDP}.  We say that
  ``\(\Xhat\)~is~\emph{integral}\,'' if \(\Xhat\) is a sum \(\Xhat =
  \sum_{k=1}^N \Xhat_k\) of rank-one matrices \(\Xhat_1,\dotsc,\Xhat_N
  \in \Psdhat{n}\) such that \(\Xhat_k\) satisfies~\eqref{eq:SDP-01}
  for each \(k \in [N]\).
\end{definition}
The usual rank constraint ``\(\rank(\Xhat) = 1\)'', which is the usual
notion of integrality for~\(\Xhat\), can be simply enforced by the
linear constraint \(\Xhat_{00} = 1\).  As before, this integrality
constraint can be described as
\begin{equation}
  \label{eq:primal-int}
  \Xhat = \sum_{A \subseteq V} m_A
  \begin{bmatrix}
    1               & \incidvector{A}^{\transp}  \\[3pt]
    \incidvector{A} & \oprodsym{\incidvector{A}} \\
  \end{bmatrix}
  \text{ for some }
  m \colon \Powerset{V} \to \Integers_+.
  \tag{P\(\Integers\)}
\end{equation}

With these ``semidefinite integrality'' conditions in mind, we can
state a semidefinite analogue of \Cref{thm:LP-chain}.  To make the
theorems syntactically more similar, we shall adopt a more compact
notation for SDPs via linear maps: define \(\Acal \colon \Symhat{n}
\to \Reals^m\) by setting \([\Acal(\Xhat)]_i \coloneqq
\iprod{\Ahat_i}{\Xhat}\) for each \(i \in [m]\), so that
\(\Acal(\Xhat) \leq b\) is equivalent to \(\iprod{\Ahat_i}{\Xhat} \leq
b_i\) \(\forall i \in [m]\).  Then the adjoint \(\Acal^* \colon
\Reals^m \to \Symhat{n}\) satisfies \(\Acal^*(y) = \sum_{i=1}^m y_i
\Ahat_i\) for each \(y \in \Reals^m\).
\begin{theorem}
  \label{thm:1}
  If \(\Chat \in \Symhat{n}\) is a matrix, \(\Acal \colon \Symhat{\Vspc}
  \to \Reals^m\) is a linear map, and \(b \in \Reals^m\) is a vector,
  then
  \begin{align}
    \label{eq:chain-ISDP}
    &
    \sup\setst[\big]{
      \iprod{\Chat}{\Xhat}
    }{
      \Acal(\Xhat) \leq b,\,
      \text{\(\Xhat\) satisfies~\eqref{eq:primal-int}}
    }
    \tag{ISDP}
    \\
    \label{eq:chain-SDP}
    & \quad\quad\quad\quad
    \leq
    \sup\setst[\big]{
      \iprod{\Chat}{\Xhat}
    }{
      \Acal(\Xhat) \leq b,\,
      \Xhat \in \Psdhat{n}
    }
    \tag{SDP}
    \\
    \label{eq:chain-SDD}
    & \quad\quad\quad\quad\quad\quad
    \leq
    \inf\setst[\big]{
      \iprodt{b}{y}
    }{
      y \in \Reals_+^m,\,
      \Shat = \Acal^*(y) - \Chat \in \Psdhat{n}
    }
    \tag{SDD}
    \\
    \label{eq:chain-ISDD}
    & \quad\quad\quad\quad\quad\quad\quad\quad
    \leq
    \inf\setst[\big]{
      \iprodt{b}{y}
    }{
      y \in \Integers_+^m,\,
      \text{\(\Shat = \Acal^*(y) - \Chat\) satisfies \eqref{eq:dual-int}}
    },
    \tag{ISDD}
  \end{align}
  and the middle (second) inequality holds with equality if either one
  of~\eqref{eq:chain-SDP} and~\eqref{eq:chain-SDD} has a positive
  definite feasible solution and finite optimal value.
\end{theorem}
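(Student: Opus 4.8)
The plan is to establish the four relations in Theorem~\ref{thm:1} essentially independently, since two of them are (by design) trivial relaxations, one is SDP Weak Duality, and the nontrivial analytic content is confined to the equality clause. First I would dispatch the outer two inequalities~\eqref{eq:chain-ISDP}~$\leq$~\eqref{eq:chain-SDP} and~\eqref{eq:chain-SDD}~$\leq$~\eqref{eq:chain-ISDD}. For the first, every $\Xhat$ satisfying~\eqref{eq:primal-int} is, by Definition~\ref{def:primal-int}, a sum of rank-one matrices in $\Psdhat{n}$ each satisfying~\eqref{eq:SDP-01}; hence $\Xhat \in \Psdhat{n}$, and since the system also contains~\eqref{eq:SDP-01} (which each summand satisfies, so their sum satisfies the equalities trivially and the inequalities by nonnegativity), $\Xhat$ is feasible in~\eqref{eq:chain-SDP}. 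Thus the feasible region of~\eqref{eq:chain-ISDP} is contained in that of~\eqref{eq:chain-SDP}, giving the inequality. The fourth relation is the mirror image: any $y \in \Integers_+^m$ for which $\Shat = \Acal^*(y) - \Chat$ satisfies~\eqref{eq:dual-int} is in particular a sum of matrices in $\Psdhat{n}$, so $\Shat \in \Psdhat{n}$ and $y \in \Reals_+^m$, i.e.\ $y$ is feasible in~\eqref{eq:chain-SDD} with the same objective $\iprodt{b}{y}$; hence the infimum over the smaller set is at least the infimum over the larger set.

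Next I would prove the middle inequality~\eqref{eq:chain-SDP}~$\leq$~\eqref{eq:chain-SDD}, which is SDP Weak Duality in the format at hand. For feasible $\Xhat$ (primal) and $y$ (dual), one computes
\begin{equation*}
  \iprodt{b}{y} - \iprod{\Chat}{\Xhat}
  = \iprodt{b}{y} - \iprod{\Acal^*(y) - \Shat}{\Xhat}
  = \iprodt{b - \Acal(\Xhat)}{y} + \iprod{\Shat}{\Xhat}
  \geq 0,
\end{equation*}
where the last step uses $b - \Acal(\Xhat) \geq 0$ together with $y \geq 0$, and $\iprod{\Shat}{\Xhat} \geq 0$ because both matrices lie in the self-dual cone $\Psdhat{n}$. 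Taking the supremum over $\Xhat$ and the infimum over $y$ yields the claim.

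Finally, the equality clause under a strict-feasibility (Slater) hypothesis is the one genuine ingredient, and I expect it to be the main point requiring care rather than a real obstacle: it is exactly strong duality for conic programs over the self-dual cone $\Psdhat{n}$. The cleanest route is to invoke a standard conic duality theorem (e.g.\ the one guaranteeing zero duality gap and dual attainment when the primal has a strictly feasible point, and symmetrically for the dual): if~\eqref{eq:chain-SDP} has a positive definite feasible $\Xhat$ with finite optimal value, the dual~\eqref{eq:chain-SDD} attains its optimum with no gap; if instead~\eqref{eq:chain-SDD} has a positive definite feasible $\Shat$, apply the same theorem to the dual problem. The only subtlety is cosmetic---our primal is stated with inequality constraints $\Acal(\Xhat) \leq b$ rather than in standard equality form---so I would first rewrite it in conic standard form over $\Psdhat{n} \times \Reals_+^m$ by introducing slacks, check that a positive definite $\Xhat$ together with strictly positive slacks gives a Slater point of the reformulation, and then read off the conclusion; the hypothesis that the optimal value is finite rules out the degenerate case where strong duality would otherwise fail. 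This closes the chain.
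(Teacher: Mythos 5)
Your proposal is correct and follows the same route as the paper, which treats the three inequalities as immediate (feasible-region containment for the two outer ones, weak duality for the middle one) and attributes the equality clause to SDP Strong Duality under the relaxed Slater condition, citing standard references. One cosmetic caveat: your parenthetical claim that a sum of matrices each satisfying~\eqref{eq:SDP-01} again satisfies~\eqref{eq:SDP-01} is false for the normalization~\eqref{eq:SDP-01-00} (the \(00\)-entry of the sum is \(N\), not \(1\)), but this aside is not needed---feasibility in~\eqref{eq:chain-ISDP} already stipulates \(\Acal(\Xhat) \leq b\) verbatim, so only \(\Xhat \in \Psdhat{n}\) requires verification.
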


The equality in \Cref{thm:1} follows from the usual constraint
qualification for SDP, namely the fact that the SDP satisfies the
\emph{relaxed Slater condition}, i.e., the SDP has a positive definite
feasible solution; see, e.g., \cite[Theorem~1.1]{Carli13a}
or~\cite[Sec.~D.2.3]{BenTalN13a}.

We shall refer to~\eqref{eq:chain-ISDD} as the \emph{integer dual SDP}
of~\eqref{eq:chain-SDP}.  For convenience, we shall say that a
feasible solution~\((y,\Shat)\) for~\eqref{eq:chain-SDD} is
\emph{integral} if it is actually feasible in~\eqref{eq:chain-ISDD},
that~is, if \(y\) is integral and
\(\Shat\)~satisfies~\eqref{eq:dual-int}.  Integrality of~\(y\)
in~\eqref{eq:chain-ISDD} shows why it is important to use integral
matrices~\(\Ahat_i\).

Let us setup the integer dual SDP of the SDP
formulation~\eqref{eq:LP-as-SDP} of LPs.  If we require integrality
from feasible solutions of~\eqref{eq:LP-as-SDP-dual}, that~is, if~we
add the constraint~\eqref{eq:dual-int} and further constrain \(\eta\),
\(u\), \(y\), and \(Z\) to be integral, then~\eqref{eq:7} becomes
equivalent to
\begin{subequations}
  \label{eq:12}
  \begin{gather}
    \eta - \iprodt{b}{y} = \iprodt{\ones}{m},
    \\
    -u = -\sum_{A \subseteq V} m_A \incidvector{A},
    \\
    \Diag(2u + A^{\transp}y - c) = \sum_{A \subseteq V} m_A^{}
    \oprodsym{\incidvector{A}} + Z.
  \end{gather}
\end{subequations}
At each feasible solution we have \(Z \geq 0\), which implies that
\(\supp(m) \subseteq \tbinom{V}{1}\); we may always set
\(m_{\emptyset} \coloneqq 0\).  Thus, the integer dual SDP
of~\eqref{eq:LP-as-SDP} can be written as
\begin{subequations}
  \label{eq:13}
  \begin{alignat}{3}
    \text{Minimize}  \quad & \iprodt{\ones}{u} + \iprodt{b}{y}
    & \\[0pt]
    \text{subject to}\quad & A^{\transp}y + u \geq c,
    & \\[0pt]
                           & y \in \Integers_+^m,\, u \in \Integers_+^n,
    &
  \end{alignat}
\end{subequations}
assuming \(A\), \(b\), and~\(c\) to be integral.  Hence, every
feasible solution \(y\) for~\eqref{eq:chain-ILD} yields a feasible
solution for~\eqref{eq:12} with the same objective value by setting
\(u \coloneqq 0\).  In~fact,
\begin{equation}
  \label{claim:LP-as-SDP-dual-int-equiv}
  \text{\eqref{eq:13} is equivalent to \eqref{eq:chain-ILD}
    from~\Cref{thm:LP-chain} when \eqref{eq:chain-ILP} is \( \sup\setst{
      \iprodt{c}{x} }{ Ax \leq b,\, 0 \leq x \leq \ones,\, x \in
      \Integers^n }\).}
\end{equation}

From our previous discussion after~\eqref{eq:LP-as-SDP-dual}, our new
notion of dual integrality passes the test of behaving nicely with
respect to ILPs.  Next we will see that it surpasses the rank-one
constraint by showing that it~yields the ``natural'' combinatorial
dual for the theta function.

Let \(G = (V,E)\) be a graph.  A subset \(U\) of~\(V\) is a
\emph{clique} in~\(G\) if \(G[U] = K_U\).  Denote the set of cliques
of~\(G\) by~\(\Kcal(G)\).  Let \(w \colon V \to \Integers\).  The
\emph{clique covering number} \(\overline{\chi}(G,w)\) of~\(G\) with
respect to~\(w\) is the optimal value of the optimization problem
\begin{subequations}
  \label{eq:clique-covering}
  \begin{alignat}{2}
    \text{Minimize} \quad & \iprodt{\ones}{m}
    & \\[0pt]
    \text{subject to}\quad & m \colon \Powerset{V} \to \Integers_+,
    & \\[0pt]
    & \supp(m) \subseteq \Kcal(G),
    & \\[0pt]
    & \sum_{\mathclap{K \in \Kcal(G)}} m_K \incidvector{K} \geq w.
  \end{alignat}
\end{subequations}
Any feasible solution of~\eqref{eq:clique-covering} is a \emph{clique
  cover} of~\(G\) with respect to~\(w\).  We now show that the integer
dual SDPs for each of the SDP formulations~\eqref{eq:theta-SDP},
\eqref{eq:theta'-SDP}, and~\eqref{eq:theta+-SDP} are essentially
extended formulations for the clique covering number
\(\overline{\chi}(G,w)\):
\begin{proposition}
  \label{prop:theta-int-dual}
  Let \(G = (V,E)\) be a graph, and let \(w \colon V \to \Integers\).
  Then
  \begin{enumerate}[(i)]
  \item\label{prop:theta-int-dual-cover-to-int} if \(m \colon
    \Powerset{V} \to \Integers_+\) is a clique cover of~\(G\) with
    respect to~\(w\), then there exists an integral dual solution
    \((\Shat,\eta,u,y,z)\) for~\eqref{eq:SDP-theta'-dual} such that
    \eqref{eq:dual-int} holds for~\(\Shat\) and \(m \colon
    \Powerset{V} \to \Integers_+\), \(\eta = \iprodt{\ones}{m}\), and
    \(y \in \Reals_+^E \oplus 0\).
  \item\label{prop:theta-int-dual-int-to-cover} if
    \((\Shat,\eta,u,y,z)\) is an integral dual solution
    for~\eqref{eq:SDP-theta'-dual} and \eqref{eq:dual-int} holds
    for~\(\Shat\) and \(m \colon \Powerset{V} \to \Integers_+\), then
    \(\eta = \iprodt{\ones}{m}\) and \(m\) is a clique cover of~\(G\)
    with respect to~\(w\).
  \end{enumerate}
\end{proposition}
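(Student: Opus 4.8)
The plan is to unwind what it means for a dual solution $(\Shat,\eta,u,y,z)$ of~\eqref{eq:SDP-theta'-dual} to satisfy~\eqref{eq:dual-int}, by reading off equations from the dual feasibility relation~\eqref{eq:SDP-theta'-dual-eq} in the same way the paper handled the rank-one case around~\eqref{eq:11}, but now with $\Shat$ written as $\sum_{A\subseteq V} m_A\,\oprodsym{(-1\oplus\incidvector{A})}$. Substituting this into~\eqref{eq:SDP-theta'-dual-eq} and comparing the $(0,0)$ block, the $(0,V)$ block, and the $(V,V)$ block produces three identities: from the $(0,0)$ entry, $\eta = \iprodt{\ones}{m}$ (since each rank-one summand contributes $1$ in that corner and the right-hand side there is $0$); from the $(0,j)$ entries, $-u = -\sum_A m_A\incidvector{A}$, i.e.\ $u = \sum_A m_A\incidvector{A}$; and from the $(i,j)$ block for $i\neq j$, the off-diagonal dual constraint forces $y_{ij} = \sum_{A\ni i,j} m_A$ for each pair, together with the diagonal relation $\Diag(2u - z - w) = \sum_A m_A\,\oprodsym{\incidvector{A}}\big|_{\text{diag}}$-part plus the contribution of the $y$-terms, which on the diagonal is $0$; so $\diag$ of both sides gives $2u - z - w = \sum_A m_A\incidvector{A} = u$, hence $u = w + z$. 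This is exactly the bookkeeping already rehearsed for~\eqref{eq:LP-as-SDP-dual} in~\eqref{eq:12}.

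For part~\eqref{prop:theta-int-dual-cover-to-int}, I would run this backwards: given a clique cover $m\colon\Powerset{V}\to\Integers_+$ of $G$ with respect to $w$, set $u \coloneqq \sum_{K\in\Kcal(G)} m_K\incidvector{K}$, $z \coloneqq u - w$ (which is in $\Reals_+^V$ precisely because $m$ is a clique cover, $\sum_K m_K\incidvector{K}\geq w$), $\eta \coloneqq \iprodt{\ones}{m}$, $y_{ij} \coloneqq \sum_{K\in\Kcal(G),\,K\ni i,j} m_K$ for each pair $ij$, and $\Shat \coloneqq \sum_{K} m_K\,\oprodsym{(-1\oplus\incidvector{K})}$. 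Then $\Shat\in\Psdhat{\Vspc}$ as a sum of rank-one PSD matrices, and~\eqref{eq:dual-int} holds by construction. One checks directly that this choice satisfies~\eqref{eq:SDP-theta'-dual-eq} block by block using the identities above; the key point is that $\supp(m)\subseteq\Kcal(G)$ guarantees $y_{ij}$ is supported on $E$ (if $ij\notin E$ then no clique contains both $i$ and $j$, so $y_{ij}=0$), which both meets the sign condition $y\in\Reals^E\oplus-\Reals_+^{\overline E}$ and in fact gives $y\in\Reals_+^E\oplus 0$ as claimed. Also $\Shat_{00}$-type constraints~\eqref{eq:sdp-linear-int-dual} hold for each rank-one piece since $\oprodsym{(-1\oplus\incidvector{K})}$ has $(0,0)$-entry $1$ and satisfies $\diag(\oprodsym{\incidvector{K}}) = \incidvector{K}$, matching the sign-flipped diagonal constraint~\eqref{eq:sdp-linear-int-dual-1}.

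For part~\eqref{prop:theta-int-dual-int-to-cover}, I would take an integral dual solution $(\Shat,\eta,u,y,z)$ with $\Shat$ expanded as in~\eqref{eq:dual-int} via some $m\colon\Powerset{V}\to\Integers_+$, derive the same three identities from~\eqref{eq:SDP-theta'-dual-eq}, conclude $\eta = \iprodt{\ones}{m}$ immediately, and then extract $\sum_A m_A\incidvector{A} = u = w + z \geq w$, which is the covering inequality. The remaining obligation is $\supp(m)\subseteq\Kcal(G)$: for each $ij\in\overline E$ the dual sign constraint reads $y_{ij}\leq 0$, while the $(i,j)$-block identity gives $y_{ij} = \sum_{A\ni i,j} m_A \geq 0$, forcing $y_{ij}=0$ and hence $m_A = 0$ for every $A$ containing both $i$ and $j$ with $ij\in\overline E$; since $m_A\geq 0$ and every non-clique set $A$ contains some non-edge, this yields $m_A=0$ whenever $A\notin\Kcal(G)$. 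Therefore $m$ is feasible in~\eqref{eq:clique-covering} and $\eta = \iprodt{\ones}{m}$, as required. I expect the main obstacle to be purely notational: carefully matching the $2\Symmetrize(\oprod{e_i}{e_j})$ terms and the factor-of-two conventions in the diagonal block of~\eqref{eq:SDP-theta'-dual-eq} so that the off-diagonal identity $y_{ij} = \sum_{A\ni i,j} m_A$ and the diagonal identity $u = w+z$ come out with the right coefficients; everything else is a direct transcription of the argument already given for~\eqref{eq:12}. The variant formulations~\eqref{eq:theta-SDP} and~\eqref{eq:theta+-SDP} are handled by the same computation with the noted modifications ($y\restriction_{\overline E}=0$, resp.\ the extra sign constraint $y\restriction_E\geq 0$), which are automatically satisfied by the solution constructed above since there $y$ is already supported on $E$ and nonnegative.
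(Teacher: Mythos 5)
Your proposal is correct and follows essentially the same route as the paper's proof: both substitute the decomposition \(\Shat = \sum_A m_A\,\oprodsym{(-1\oplus\incidvector{A})}\) into~\eqref{eq:SDP-theta'-dual-eq}, read off the identities \(\eta = \iprodt{\ones}{m}\), \(u = \sum_A m_A\incidvector{A}\), \(u = w+z\), and \(y_{ij} = \iprodt{\incidvector{\tbinom{V}{ij\subseteq}}}{m}\) by comparing blocks and applying \(\diag\), then run this forwards for~(i) and use the sign constraint \(y_{ij}\leq 0\) on \(\overline{E}\) for~(ii). Your explicit remark that the covering inequality comes from \(u = w+z \geq w\) is a small completeness improvement over the paper, which leaves that step implicit.
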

\begin{proof}
  To restrict ourselves to integral dual solutions
  for~\eqref{eq:SDP-theta'-dual}, we (i)~require the
  dual~slack~\(\Shat\) to satisfy~\eqref{eq:dual-int}, and
  (ii)~require \(\eta\), \(u\), \(y\), and \(z\) to be integral.  In
  this case, \eqref{eq:SDP-theta'-dual-eq} can be rewritten as
  \begin{gather}
    \eta = \iprodt{\ones}{m},
    \notag
    \\
    u = \sum_{A \subseteq V} m_A \incidvector{A},
    \notag
    \\
    \Diag(2u-z-w) + \sum_{ij \in \tbinom{V}{2}} 2y_{ij}
    \Symmetrize(\oprod{e_i}{e_j}) = \sum_{A \subseteq V} m_A^{} \oprodsym{\incidvector{A}}.
    \label{eq:14}
  \end{gather}

  Applying \(\diag\) to both sides of~\eqref{eq:14} yields \(2u-z-w
  = \sum_{A \subseteq V} m_A\incidvector{A} = u\).  Let \(i,j \in V\)
  be distinct.  The \(ij\)th entry of~\eqref{eq:14} is
  \begin{equation*}
    y_{ij} = \iprodt{\incidvector{\tbinom{V}{ij\subseteq}}}{m}.
  \end{equation*}
  Hence, the integer dual SDP of~\eqref{eq:theta'-SDP} can be written
  as
  \begin{subequations}
    \label{eq:SDP-theta'-dual-clean}
    \begin{alignat}{3}
      \text{Minimize} \quad & \eta & \\[0pt]
      \text{subject to}\quad &
      m \colon \Powerset{V} \to \Integers_+, & \\[0pt]
      & \eta = \iprodt{\ones}{m}, & \\[0pt]
      & u = \sum_{A \subseteq V} m_A \incidvector{A}, & \\[0pt]
      & u = w + z, & \\[0pt]
      \label{eq:SDP-theta'-dual-clean-y}
      & y_{ij} = \iprodt{\incidvector{\tbinom{V}{ij\subseteq}}}{m} \qquad
      \forall ij \in \tbinom{V}{2}, \\[0pt]
      \label{eq:SDP-theta'-dual-clean-Shat}
      & \Shat = \sum_{A \subseteq V} m_A
      \begin{bmatrix}
        1                & -\incidvector{A}^{\transp} \\[3pt]
        -\incidvector{A} & \oprodsym{\incidvector{A}} \\
      \end{bmatrix},
      \\
      &
      \Shat \in \Psdhat{\Vspc},\,
      \eta \in \Integers,\,
      u \in \Integers^V,\,
      z \in \Integers_+^V,\,
      y \in \Integers^E \oplus -\Integers_+^{\overline{E}}. &
    \end{alignat}
  \end{subequations}

  We may now prove the result.  We start
  with~\eqref{prop:theta-int-dual-cover-to-int}.  Suppose \(m \colon
  \Powerset{V} \to \Integers_+\) is a clique cover of~\(G\) with
  respect to~\(w\).  Set \(u \coloneqq \sum_{A \subseteq V} m_A
  \incidvector{A}\), \(z \coloneqq u - w \geq 0\), and \(\eta
  \coloneqq \iprodt{\ones}{m}\).  Define \(y\) and~\(\Shat\) as
  in~\eqref{eq:SDP-theta'-dual-clean-y}
  and~\eqref{eq:SDP-theta'-dual-clean-Shat}, respectively.  Since
  \(\supp(m) \subseteq \Kcal(G)\), we get \(y \in \Integers_+^E \oplus
  0\).  Hence, \((\Shat,\eta,u,y,z)\) is feasible
  in~\eqref{eq:SDP-theta'-dual-clean} and satisfies the desired
  properties in~\eqref{prop:theta-int-dual-cover-to-int}.

  For~\eqref{prop:theta-int-dual-int-to-cover}, let
  \((\Shat,\eta,u,y,z)\) be feasible
  in~\eqref{eq:SDP-theta'-dual-clean}.  If \(ij \in \overline{E}\),
  then \(y_{ij} \leq 0\) together
  with~\eqref{eq:SDP-theta'-dual-clean-y} yield \(m_A = 0\) for each
  \(A \subseteq V\) such that \(i,j \in V\).  Hence, \(m_A > 0\) and
  \(i,j \in A \subseteq V\) imply \(ij \in E\), i.e., \(\supp(m)
  \subseteq \Kcal(G)\), whence \(m\) is a clique cover of~\(G\).  This
  proves~\eqref{prop:theta-int-dual-int-to-cover}.
\end{proof}

Note that the result above is stated in a way to make it clear that
the integer dual SDPs of~\(\theta\), \(\theta'\), and~\(\theta^+\) are
all equivalent to the clique covering problem.

We have just seen that, not only the integer dual SDP has a feasible
solution for every graph, but it is actually equivalent to a natural
combinatorial optimization problem.  In~fact, the clique covering
problem is the \emph{right} dual problem for the maximum stable set
problem at least for the very rich class of perfect graphs; see, e.g.,
\cite[Ch.~67]{Schrijver03a}.  Recall that a graph \(G = (V,E)\) is
perfect if \(\omega(G[U]) = \chi(G[U])\) for each \(U \subseteq V\).

One may contend that the integrality constraints~\eqref{eq:primal-int}
and~\eqref{eq:dual-int} are not quite natural, and they depend
unnecessarily on having~\eqref{eq:SDP-01} as part of the constraints.
Note, however, that this arises from the choice of the
embedding~\eqref{eq:3}; the same objection might as well be raised for
ILPs, which have the arbitrary (though~intuitive) embedding using
incidence vectors.  That~is, the integrality conditions for ILP suffer
from the same drawbacks arising from the dependence on the embedding.
Other common drawbacks are that integrality constraints are not (and
probably cannot be) scaling invariant nor coordinate-free. The latter
drawbacks make it very hard to define a general integrality notion for
general convex relaxations; we discuss these issues
in~\Cref{sec:conclusion}.

Now that we have a sensible notion of integrality for the dual SDP, we
go back to the chain from~\Cref{thm:1}.  Motivated by the notion of
total dual integrality that was so powerful for proving equality
throughout in the chain from~\Cref{thm:LP-chain}, and which was based
on \Cref{thm:Edmonds-Giles} and~\Cref{cor:Hoffman}, we shall prove a
generalized version of the latter \namecref{cor:Hoffman} in the next
section.

\section{Integrality in Convex Relaxations}
\label{sec:int-convex}

In this section, we generalize \Cref{cor:Hoffman} to compact convex
sets.  This shall motivate the definition of total dual integrality
for SDPs in the next section.  Following~\cite{Rockafellar97a}, we
denote the \emph{support function} of a convex set \(\Cscr \subseteq
\Reals^n\) by
\begin{equation}
  \label{def:suppf}
  \suppf{\Cscr}{w} \coloneqq \sup_{x \in \Cscr} \iprod{w}{x}
  \in [-\infty,+\infty]
  \qquad
  \forall w \in \Reals^n.
\end{equation}
\begin{theorem}
  \label{rational-approx}
  If \(\Cscr \subseteq \Reals^n\) is a compact convex set, then
  \(\Cscr = \setst{x \in \Reals^n}{\iprodt{w}{x} \leq \suppf{\Cscr}{w}
    \,\forall w \in \Integers^n}\).
\end{theorem}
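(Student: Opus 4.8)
The plan is to prove the two inclusions separately. The inclusion ``$\subseteq$'' is immediate: if $x \in \Cscr$ and $w \in \Integers^n$, then $\iprodt{w}{x} \le \sup_{y \in \Cscr} \iprodt{w}{y} = \suppf{\Cscr}{w}$ by the very definition of the support function, so $x$ lies in the right-hand side. For ``$\supseteq$'', the idea is to upgrade the usual \emph{real} supporting-halfspace description of a closed convex set to one using only \emph{integer} normal vectors. Concretely, I would start from the standard fact (a consequence of the separating hyperplane theorem) that a closed convex set is the intersection of all halfspaces containing it, i.e.\ $\Cscr = \setst{x \in \Reals^n}{\iprodt{w}{x} \le \suppf{\Cscr}{w}\ \forall w \in \Reals^n}$. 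Hence it suffices to take an arbitrary $x$ in the right-hand side of the claimed identity and show it satisfies $\iprodt{w}{x} \le \suppf{\Cscr}{w}$ for \emph{every} real $w$.

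This is achieved by two reductions. First, $\Integers^n \rightsquigarrow \Rationals^n$: the support function is positively homogeneous, $\suppf{\Cscr}{\lambda w} = \lambda\,\suppf{\Cscr}{w}$ for $\lambda \ge 0$. So if $w \in \Rationals^n$, write $w = \tfrac1N w'$ with $N \in \Integers$, $N \ge 1$, $w' \in \Integers^n$; then $\iprodt{w'}{x} \le \suppf{\Cscr}{w'}$ holds by assumption, and dividing by $N$ gives $\iprodt{w}{x} \le \suppf{\Cscr}{w}$. Thus $x$ satisfies all \emph{rational}-normal inequalities. Second, $\Rationals^n \rightsquigarrow \Reals^n$: since $\Cscr$ is bounded, for any $w, w' \in \Reals^n$ one has $\suppf{\Cscr}{w} - \suppf{\Cscr}{w'} \le \sup_{y \in \Cscr} \iprodt{w - w'}{y}$, whose right-hand side tends to $0$ as $w' \to w$; hence $\suppf{\Cscr}{\cdot}$ is (Lipschitz) continuous on $\Reals^n$. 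Given any $w \in \Reals^n$, choose $w^{(k)} \in \Rationals^n$ with $w^{(k)} \to w$; from $\iprodt{w^{(k)}}{x} \le \suppf{\Cscr}{w^{(k)}}$ and continuity of both sides (the left side is obviously continuous in the normal), passing to the limit yields $\iprodt{w}{x} \le \suppf{\Cscr}{w}$. Therefore $x \in \Cscr$, and ``$\supseteq$'' is proved.

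I do not expect a genuine obstacle here, but the essential point — and the only place compactness is used — is the continuity of $\suppf{\Cscr}{\cdot}$, which is what allows a dense set of rational (hence, after scaling, integer) normal directions to ``fill in'' every real direction. It is worth noting that this is where boundedness is indispensable: for an unbounded $\Cscr$ the support function takes the value $+\infty$ on a large set that may contain no nonzero integer vector (e.g.\ a halfspace with an irrational normal), so the integer-normal system imposes no effective constraint and the claimed identity genuinely fails. The only care needed in writing up is to state the general ``closed convex set $=$ intersection of its supporting halfspaces'' fact (or, equivalently, to run the argument through strict separation of the point $x \notin \Cscr$ from $\Cscr$ directly, producing a real separating $w$, then rationalizing and clearing denominators while preserving the strict inequality by continuity and homogeneity).
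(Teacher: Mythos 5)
Your proposal is correct and follows essentially the same route as the paper's proof: the reduction from integer to rational normals via positive homogeneity, then from rational to real normals via the Lipschitz continuity of the support function of a bounded set, finishing with the representation of a closed convex set as the intersection of its supporting halfspaces. The only differences are cosmetic (you prove the Lipschitz estimate directly where the paper cites Rockafellar), so there is nothing to fix.
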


\begin{proof}
  We may assume that \(\Cscr \neq \emptyset\).  The inclusion
  `\(\subseteq\)' is obvious.  For the reverse inclusion, we start by
  noting that the RHS is equal to \hbox{\(\Cscr' \coloneqq \setst{x
      \in \Reals^n}{\iprodt{w}{x} \leq \suppf{\Cscr}{w} \,\forall w
      \in \Rationals^n}\)} by positive homogeneity
  of~\(\suppf{\Cscr}{\cdot}\).  Let \(\xbar \in \Cscr'\).  Let \(\wbar
  \in \Reals^n\), and let \((w_k)_{k \in \Naturals}\) be a sequence
  in~\(\Rationals^n\) converging to~\(\wbar\).  Then
  \(\iprodt{w_k}{\xbar} \leq \suppf{\Cscr}{w_k}\) for every \(k \in
  \Naturals\), which in the limit yields \hbox{\(\iprodt{\wbar}{\xbar}
    \leq \suppf{\Cscr}{\wbar}\)} by the (Lipschitz) continuity of the
  support function (apply Corollary 13.3.3 of~\cite{Rockafellar97a} to
  the function \(\suppf{\Cscr}{\cdot}\), where \(\Cscr\) is a compact
  convex set).  Hence \(\Cscr' \subseteq \setst{x \in
    \Reals^n}{\iprodt{w}{x} \leq \suppf{\Cscr}{w}\,\forall w \in
    \Reals^n} = \Cscr\), where the latter equation follows from
  Theorem 13.1 of~\cite{Rockafellar97a}.
\end{proof}

Note that the obvious generalization of~\Cref{rational-approx} to
unbounded convex set is false, even when restricted to
polyhedral~\(\Cscr\).  Consider, for~instance as~\(\Cscr\), any closed
halfspace with a normal vector containing both rational and irrational
entries.

Next we connect to the Gomory-Chvátal closure.  Let \(\Cscr \subseteq
\Reals^n\) be a convex set.  The \emph{Gomory-Chvátal closure}
of~\(\Cscr\) is
\begin{equation}
  \label{def:Gomory-Chvatal}
  \GomoryChvatal(\Cscr)
  \coloneqq
  \setst*{
    x \in \Reals^n
  }{
    \iprodt{w}{x} \leq \floor*{\suppf{\Cscr}{w}}
    \,\forall w \in \Integers^n
  }.
\end{equation}
The \emph{integer hull} of~\(\Cscr\) is
\begin{equation}
  \label{def:integer-hull}
  \Cscr_I \coloneqq \conv(\Cscr \cap \Integers^n).
\end{equation}

\begin{theorem}[\cite{Schrijver80a}]
  \label{cg-converges}
  If \(\Cscr \subseteq \Reals^n\) is a bounded convex set, then
  \(\GomoryChvatal^k(\Cscr) = \Cscr_I\) for some natural \(k \geq 1\).
\end{theorem}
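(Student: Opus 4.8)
The plan is to prove $\GomoryChvatal^k(\Cscr) = \Cscr_I$ for a bounded convex set $\Cscr \subseteq \Reals^n$ by reducing to the classical polyhedral case, which is the content of the original Gomory-Chvátal finiteness theorem for rational polytopes. First I would dispose of trivialities: if $\Cscr \cap \Integers^n = \emptyset$, one must show that finitely many rounds of the Gomory-Chvátal operator yield the empty set; if $\Cscr \cap \Integers^n \neq \emptyset$, then $\Cscr_I$ is a (nonempty) \emph{rational} polytope (its vertices are integral, hence rational), and the goal becomes squeezing $\GomoryChvatal^k(\Cscr)$ down to exactly this polytope.

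The key observation to exploit is that the Gomory-Chvátal closure is \emph{monotone} under inclusion and that, since $\Cscr$ is bounded, we may enclose it in a rational polytope $P$ with $\Cscr \subseteq P$ and $P \cap \Integers^n = \Cscr \cap \Integers^n$ — for instance, take $P$ to be a large integer box, or more carefully a rational polytope chosen so that no new integer points are introduced (this is possible because $\Cscr$ is compact, so only finitely many integer points lie near it, and one can cut $P$ down with rational halfspaces separating $\Cscr$ from each unwanted integer point). By monotonicity, $\GomoryChvatal^k(\Cscr) \subseteq \GomoryChvatal^k(P)$ for every $k$. Applying the classical theorem of Schrijver (the rational polytope case of \Cref{cg-converges}, which is what the cited \cite{Schrijver80a} establishes) to $P$, we get $\GomoryChvatal^{k_0}(P) = P_I = \Cscr_I$ for some $k_0$. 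Hence $\GomoryChvatal^{k_0}(\Cscr) \subseteq \Cscr_I$. The reverse inclusion $\Cscr_I \subseteq \GomoryChvatal^k(\Cscr)$ for all $k$ is easy: every integer point of $\Cscr$ survives each rounding step (if $x \in \Cscr \cap \Integers^n$ and $w \in \Integers^n$, then $\iprodt{w}{x} \in \Integers$ and $\iprodt{w}{x} \le \suppf{\Cscr}{w}$, so $\iprodt{w}{x} \le \floor{\suppf{\Cscr}{w}}$), and $\GomoryChvatal(\cdot)$ always produces a closed convex (indeed polyhedral, when finitely many inequalities suffice) set, so it contains the convex hull $\Cscr_I$; an induction on $k$ then gives $\Cscr_I \subseteq \GomoryChvatal^k(\Cscr)$ for all $k$. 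Combining the two inclusions at $k = k_0$ finishes the proof.

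There is one subtlety worth isolating as the main obstacle: invoking the classical theorem for $P$ requires $P$ to be a \emph{rational} polytope, and also requires knowing that $\GomoryChvatal$ applied to a (possibly non-polyhedral) set is well-behaved enough that the monotonicity argument is legitimate — in particular that $\GomoryChvatal(\Cscr)$ is genuinely defined by the displayed intersection over all $w \in \Integers^n$ even when $\Cscr$ is not polyhedral. Here one uses \Cref{rational-approx}: since $\Cscr$ is compact convex, $\Cscr$ itself is the intersection of the halfspaces $\iprodt{w}{x} \le \suppf{\Cscr}{w}$ over $w \in \Integers^n$, so $\GomoryChvatal(\Cscr)$ is a closed convex set sandwiched between $\Cscr_I$ and $\Cscr$, and $\suppf{\Cscr}{w} = \suppf{P}{w}$ is finite for all $w$ because $\Cscr$ is bounded — this is exactly what makes the rounding operation meaningful and the monotonicity comparison with $P$ valid. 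Once these finiteness and well-definedness points are in place, the rest is the monotonicity-plus-sandwich argument sketched above, and no delicate new estimates are needed.
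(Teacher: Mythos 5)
The paper offers no proof of this theorem: it is imported as a black box from \cite{Schrijver80a}, so there is nothing internal to compare your argument against. What you propose is essentially the standard derivation of the compact-convex case from the classical polyhedral case, and the skeleton is sound: \(\GomoryChvatal\) is monotone under inclusion because \(\suppf{\Cscr}{w} \leq \suppf{P}{w}\) whenever \(\Cscr \subseteq P\); a bounding integer box contains only finitely many integer points, and each such point outside the \emph{compact} set \(\Cscr\) can be strictly separated from \(\Cscr\) by a rational inequality (perturb a separating hyperplane, using finiteness and continuity of the support function), giving a rational polytope \(P \supseteq \Cscr\) with \(P_I = \Cscr_I\); the Chvátal--Schrijver finiteness theorem for \(P\) then yields \(\GomoryChvatal^{k_0}(\Cscr) \subseteq \GomoryChvatal^{k_0}(P) = P_I = \Cscr_I\); and the reverse inclusion \(\Cscr_I \subseteq \GomoryChvatal^k(\Cscr)\) by induction is routine. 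Two small blemishes: your parenthetical claim that \(\suppf{\Cscr}{w} = \suppf{P}{w}\) is false in general (only \(\leq\) holds), though nothing in the argument uses it; and the case distinction on whether \(\Cscr \cap \Integers^n = \emptyset\) is unnecessary, since the main argument covers the empty case (there \(P_I = \emptyset\)).

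The one substantive point is that you silently strengthen the hypothesis from ``bounded'' to ``compact'' when constructing \(P\), and this strengthening is genuinely needed: the theorem as printed is \emph{false} for non-closed bounded convex sets. For \(\Cscr = (0,1) \subseteq \Reals\) one has \(\suppf{\Cscr}{w} = \max\set{w,0}\) for integral \(w\), hence \(\GomoryChvatal^k(\Cscr) = [0,1]\) for every \(k \geq 1\), while \(\Cscr_I = \conv(\emptyset) = \emptyset\). (The support function cannot distinguish \(\Cscr\) from its closure, so the correct general statement replaces \(\Cscr_I\) by the integer hull of the closure; since the paper applies the theorem only to compact sets, in \Cref{cor:3.1}, nothing downstream is affected.) Finally, be clear that your argument is a reduction rather than a from-scratch proof: it consumes the rational-polytope case as given. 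That is legitimate here, since the finiteness theorem for polytopes (Chvátal, extended to rational polyhedra in \cite{Schrijver80a}) is precisely what the citation supplies, but a reader should understand that the content of your contribution is the passage from rational polytopes to compact convex sets, not a new proof of convergence itself.
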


We now generalize \Cref{cor:Hoffman}
(see~\cite{BraunP14a,DadushDV14a,DadushDV11a} for recent
generalizations in similar directions):
\begin{corollary}
  \label{cor:3.1}
  If \(\Cscr \subseteq \Reals^n\) is a nonempty compact convex set,
  then \(\Cscr = \Cscr_I\) if and only if \(\suppf{\Cscr}{w} \in
  \Integers\) for every \(w \in \Integers^n\).
\end{corollary}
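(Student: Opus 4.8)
The plan is to prove the two implications of \Cref{cor:3.1} separately, using \Cref{rational-approx} and \Cref{cg-converges} as the main tools.

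First, suppose $\Cscr = \Cscr_I$. Then $\Cscr$ is a nonempty polytope with integral vertices, so for every $w \in \Integers^n$ the linear program $\sup_{x \in \Cscr} \iprodt{w}{x}$ attains its optimum at a vertex of $\Cscr$, which is an integral point; hence $\suppf{\Cscr}{w} = \iprodt{w}{v}$ for some integral $v$, and this is an integer. (Alternatively, one can phrase this as: $\suppf{\Cscr}{w} = \suppf{\conv(\Cscr \cap \Integers^n)}{w} = \max\setst{\iprodt{w}{x}}{x \in \Cscr \cap \Integers^n} \in \Integers$, using that the supremum of a linear function over a convex hull of a nonempty finite-or-bounded set equals the supremum over the set itself, together with compactness to guarantee the max is attained and finite.) This direction is the easy one and requires essentially no new ingredients.

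For the converse, suppose $\suppf{\Cscr}{w} \in \Integers$ for every $w \in \Integers^n$. Then $\floor{\suppf{\Cscr}{w}} = \suppf{\Cscr}{w}$ for every $w \in \Integers^n$, so comparing \eqref{def:suppf}, \eqref{def:Gomory-Chvatal}, and \Cref{rational-approx} gives
\begin{equation*}
  \GomoryChvatal(\Cscr)
  = \setst{x \in \Reals^n}{\iprodt{w}{x} \leq \suppf{\Cscr}{w}\,\forall w \in \Integers^n}
  = \Cscr.
\end{equation*}
Iterating, $\GomoryChvatal^k(\Cscr) = \Cscr$ for all $k \geq 1$. Since $\Cscr$ is bounded, \Cref{cg-converges} yields a natural $k \geq 1$ with $\GomoryChvatal^k(\Cscr) = \Cscr_I$, and therefore $\Cscr = \Cscr_I$.

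The main obstacle is the converse direction, and specifically making sure \Cref{rational-approx} is applicable: one needs $\Cscr$ compact (given) so that $\suppf{\Cscr}{w}$ is finite for every $w$, which is what lets the floor operation in \eqref{def:Gomory-Chvatal} be meaningful and lets the identity $\GomoryChvatal(\Cscr) = \Cscr$ go through cleanly. A secondary point of care is that \Cref{cg-converges} requires $\Cscr$ to be bounded — again supplied by the hypothesis — and that we only need \emph{some} finite $k$, not $k = 1$; the equality $\GomoryChvatal^k(\Cscr) = \Cscr$ for all $k$ then forces $\Cscr_I = \Cscr$. No delicate estimates are needed beyond invoking these two cited results.
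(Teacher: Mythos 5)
Your proof is correct and follows essentially the same route as the paper: necessity via attainment of the supremum at an integral vertex of \(\Cscr_I\), and sufficiency by observing that integrality of the support values makes the floor in the Gomory--Chvátal closure vacuous, so \(\GomoryChvatal(\Cscr)=\Cscr\) by \Cref{rational-approx}, and then invoking \Cref{cg-converges}. No further comment is needed.
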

\begin{proof}
  Necessity is clear.  For sufficiency, note that \(\Cscr = \setst{x
    \in \Reals^n}{\iprodt{w}{x} \leq
    \left\lfloor\suppf{\Cscr}{w}\right\rfloor\,\forall w \in
    \Integers^n} = \GomoryChvatal(\Cscr)\) by~\Cref{rational-approx}.
  Hence, \(\GomoryChvatal^k(\Cscr) = \Cscr\) for every \(k \geq 1\),
  so \(\Cscr = \Cscr_I\) by \Cref{cg-converges}.
\end{proof}

Characterizations of exactness of convex relaxations for sets of
integer points can naturally involve (convex) geometry in general,
boundary structure of convex sets in particular (including polyhedral
combinatorics), diophantine equations (number theory), and convex
analysis and optimization. Next, we summarize some of the consequences
of our geometric characterization (Corollary 10) of exactness for
convex relaxations of integral polytopes.  The next theorem,
well-known in the special case of LP relaxations, provides equivalent
characterizations of integrality in terms of the facial structure of
the convex relaxation, optimum values of linear functions over the
relaxation, optimal solutions of the linear optimization problems over
the relaxation, diophantine equations, and gauge functions in convex
optimization and analysis.

A convex subset \(\Fscr\) of a convex set \(\Cscr\) is a \emph{face}
of \(\Cscr\) if, for every \(x,y \in \Cscr\) such that the open line
segment \((x,y) \coloneqq \setst{\lambda x + (1-\lambda)y}{\lambda \in
  (0,1)}\) between~\(x\) and~\(y\) meets~\(\Fscr\), we have \(x,y \in
\Fscr\).  A nonempty face of \(\Cscr\) which does not contain another
nonempty face of \(\Cscr\) is a \emph{minimal face} of \(\Cscr\).  If
\(w \in \Reals^n \setminus \set{0}\) and \(\beta \in \Reals\), we say
that \(\Hscr \coloneqq \setst{x \in \Reals^n}{\iprodt{w}{x} \leq
  \beta}\) is a \emph{supporting halfspace} of~\(\Cscr\) if \(\Cscr
\subseteq \Hscr\); in this case we also say that \(\setst{x \in
  \Reals^n}{\iprodt{w}{x} = \beta}\) is a \emph{supporting hyperplane}
of~\(\Cscr\).  The intersection of~\(\Cscr\) with any of its
supporting hyperplanes is a face of~\(\Cscr\); such faces are
\emph{exposed}.

\begin{theorem}
  \label{thm:3.main}
  Let \(\Cscr\) be a nonempty compact convex set in
  \(\Reals^n\). Then, the following are equivalent:
  \smallskip
  \begin{enumerate}[(i)]
    \begin{minipage}{\textwidth}
    \item\label{item:1} \(\Cscr = \Cscr_I\);
    \item\label{item:2} every nonempty face of \(\Cscr\) contains an
      integral point;
    \item\label{item:3} every minimal face of \(\Cscr\) contains an
      integral point;
    \item\label{item:4} for every \(w \in \Reals^n\), we have that \(
      \max\setst{\iprod{w}{x}}{x \in \Cscr} \) is attained by an
      integral point;
    \item\label{item:5} for every \(w \in \Integers^n\), we have \(
      \max\setst{\iprod{w}{x}}{x \in \Cscr} \in \Integers \);
    \item\label{item:6} every rational supporting hyperplane for
      \(\Cscr\) contains integral points;
    \item\label{item:7} for each \(x_0 \in \Cscr\) and for each \(w
      \in \Integers^n\), we have \(\iprod{w}{x_0} + \inf\setst{\eta
        \in \Reals_{++}}{\tfrac{1}{\eta}w \in (\Cscr-x_0)^{\circ}} \in
      \Integers\);
    \item\label{item:8} there exists \(x_0 \in \Cscr\) such that, for
      each \(w \in \Integers^n\), \(\iprod{w}{x_0} + \inf\setst{\eta
        \in \Reals_{++}}{\tfrac{1}{\eta}w \in (\Cscr-x_0)^{\circ}} \in
      \Integers\).
    \end{minipage}
  \end{enumerate}
\end{theorem}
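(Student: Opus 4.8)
The plan is to establish the equivalences by proving a suitable cycle of implications together with a few short side implications, using Corollary \ref{cor:3.1} as the pivot that connects the purely geometric conditions to the arithmetic one. First I would dispatch the implications among \eqref{item:1}, \eqref{item:2}, \eqref{item:3}, \eqref{item:4}, and \eqref{item:5} that are classical and hold verbatim as in the polytope case. The chain \eqref{item:1}$\Rightarrow$\eqref{item:2}$\Rightarrow$\eqref{item:3} is immediate since $\Cscr_I$ is the convex hull of its integral points, every face of $\conv(\Cscr\cap\Integers^n)$ is generated by integral points, and a minimal face is in particular a nonempty face; for \eqref{item:3}$\Rightarrow$\eqref{item:1} one uses that a nonempty compact convex set is the convex hull of the union of its minimal faces (these are the analogues of vertices; in the compact case each minimal face is a translate of a subspace intersected with $\Cscr$, hence itself compact, and the Krein--Milman-type decomposition into minimal faces holds). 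For the optimization-flavored conditions, \eqref{item:2}$\Rightarrow$\eqref{item:4} follows because $\arg\max\setst{\iprod{w}{x}}{x\in\Cscr}$ is a nonempty face (it is the intersection of $\Cscr$ with a supporting hyperplane when $w\neq 0$, and all of $\Cscr$ when $w=0$), hence contains an integral point by \eqref{item:2}; \eqref{item:4}$\Rightarrow$\eqref{item:5} is trivial since an integral maximizer against an integral $w$ gives an integral optimal value; and \eqref{item:5}$\Rightarrow$\eqref{item:1} is precisely Corollary \ref{cor:3.1}, noting that $\max\setst{\iprod{w}{x}}{x\in\Cscr}=\suppf{\Cscr}{w}$ by compactness. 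This closes the loop on \eqref{item:1}--\eqref{item:5}.

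Next I would fold in \eqref{item:6}. The implication \eqref{item:2}$\Rightarrow$\eqref{item:6} is clear since the intersection of $\Cscr$ with a rational supporting hyperplane is a nonempty face. For \eqref{item:6}$\Rightarrow$\eqref{item:5}, given $w\in\Integers^n\setminus\set{0}$, set $\beta\coloneqq\suppf{\Cscr}{w}$, which is finite by compactness; then $\setst{x}{\iprodt{w}{x}=\beta}$ is a rational supporting hyperplane, so by \eqref{item:6} it contains an integral point $z$, and $\iprodt{w}{z}=\beta\in\Integers$ (the case $w=0$ gives $\suppf{\Cscr}{0}=0\in\Integers$). The one subtlety here is that a priori $\beta$ need not be rational, but the hyperplane $\setst{x}{\iprodt{w}{x}=\beta}$ is still ``rational'' in the sense of having a rational normal direction, and containing an integral point $z$ forces $\beta=\iprodt{w}{z}$ to be an integer anyway — so I should be careful to state \eqref{item:6} as ``supporting hyperplanes with rational normal vectors,'' matching the usage ``rational supporting hyperplane,'' and the argument goes through. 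At this point \eqref{item:1}--\eqref{item:6} are all equivalent.

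Finally I would attach the gauge-function conditions \eqref{item:7} and \eqref{item:8}. The key identity, valid for any $x_0\in\Cscr$ and any $w\in\Reals^n$, is
\begin{equation}
  \label{eq:gauge-support}
  \suppf{\Cscr}{w}
  = \iprod{w}{x_0}
  + \suppf{\Cscr-x_0}{w}
  = \iprod{w}{x_0}
  + \inf\setst[\big]{\eta\in\Reals_{++}}{\tfrac{1}{\eta}w\in(\Cscr-x_0)^{\circ}},
\end{equation}
where the last equality is the standard duality between the support function of a compact convex set containing the origin (namely $\Cscr-x_0$, which contains $0$) and the gauge of its polar: indeed $\suppf{\Cscr-x_0}{w}=\inf\setst{\eta>0}{w\in\eta(\Cscr-x_0)^{\circ}}$ is the gauge function $\gamma_{(\Cscr-x_0)^{\circ}}(w)$, using that $(\Cscr-x_0)^{\circ}$ is a compact convex set containing the origin in its interior precisely when $x_0$ is interior to $\Cscr$ — but the gauge identity holds for any compact convex set containing $0$ with the convention that the infimum over an empty set is $+\infty$, which is consistent with the ``strongly zero'' convention in \eqref{eq:1}. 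Granting \eqref{eq:gauge-support}, condition \eqref{item:7} says exactly that $\suppf{\Cscr}{w}\in\Integers$ for every $w\in\Integers^n$ (for every choice of $x_0\in\Cscr$), i.e. \eqref{item:7}$\Leftrightarrow$\eqref{item:5}; and \eqref{item:8} is the weakening asserting this for some $x_0$, so \eqref{item:5}$\Rightarrow$\eqref{item:7}$\Rightarrow$\eqref{item:8}$\Rightarrow$\eqref{item:5}, the last step again via \eqref{eq:gauge-support}. The main obstacle I anticipate is purely bookkeeping around the gauge/polar duality \eqref{eq:gauge-support} when $x_0$ is on the boundary of $\Cscr$: then $0$ is on the boundary of $\Cscr-x_0$, the polar $(\Cscr-x_0)^{\circ}$ is unbounded, and one must check the gauge identity and the $[-\infty,+\infty]$-valued conventions carefully (in particular that $\suppf{\Cscr-x_0}{w}$ can be $0$, corresponding to $w\in(\Cscr-x_0)^{\circ}$ scaled arbitrarily, matched by the $\inf$ being $0$, and can be $+\infty$ only if $\Cscr-x_0$ were unbounded, which it is not). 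Everything else is a routine transcription of the polyhedral theory, with Corollary \ref{cor:3.1} doing the real work.
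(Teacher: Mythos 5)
Your proposal is correct and rests on the same pivot as the paper's proof: everything is funneled into condition~\eqref{item:5} and then \Cref{cor:3.1} does the real work, while \eqref{item:7} and~\eqref{item:8} are handled exactly as in the paper via the identity \(\suppf{\Cscr}{w} = \iprod{w}{x_0} + \suppf{\Cscr-x_0}{w}\) and the gauge-of-the-polar reformulation (your boundary-case caveats match the paper's use of \(0 \in \Cscr - x_0\) to justify restricting \(\eta\) to \(\Reals_+\)). The implication graph differs in two places, though. For item~\eqref{item:6}, the paper proves \eqref{item:5}\(\Rightarrow\)\eqref{item:6} by rescaling \(w\) to a primitive integral vector and invoking the diophantine fact that \(\setst{x \in \Integers^n}{\iprodt{\mu w}{x} = \beta} \neq \emptyset\) iff \(\gcd(\mu w_1,\dotsc,\mu w_n)\) divides \(\beta\); you instead derive \eqref{item:6} from \eqref{item:2} (the supporting hyperplane meets \(\Cscr\) in a nonempty face) and return via \eqref{item:6}\(\Rightarrow\)\eqref{item:5}. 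Your route avoids the gcd/Bézout step entirely and even yields the stronger conclusion that the hyperplane contains an integral point \emph{of \(\Cscr\)}, whereas the paper's route isolates the number-theoretic content explicitly; your insistence on reading ``rational supporting hyperplane'' as one with rational normal vector that touches \(\Cscr\) agrees with the paper's implicit usage. The second difference is the one spot you should tighten: the paper never proves \eqref{item:3}\(\Rightarrow\)\eqref{item:1} directly, but rather continues the chain with \eqref{item:3}\(\Rightarrow\)\eqref{item:4} (the argmax face contains a minimal face, hence an integral point). Your direct route via a Minkowski/Krein--Milman decomposition is fine in substance, but as stated it has a small hole: knowing that \(\Cscr\) is the convex hull of its minimal faces and that each minimal face contains an integral point does \emph{not} give \(\Cscr \subseteq \Cscr_I\) unless each minimal face is a single point. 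That is true here --- faces of a compact convex set are closed, hence compact, so every nonempty face contains an extreme point of \(\Cscr\), and therefore the minimal faces are exactly the extreme points --- but your phrase ``a translate of a subspace intersected with \(\Cscr\)'' does not by itself deliver it; either add that one line or follow the paper and prove \eqref{item:3}\(\Rightarrow\)\eqref{item:4} instead.
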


\begin{proof}
  \(\text{\eqref{item:1}} \Rightarrow \text{\eqref{item:2}}\): Since
  \(\Cscr\) is compact, it is bounded.  Therefore, \(\Cscr=\Cscr_I\)
  implies that \(\Cscr\) is a polytope.  Every nonempty face of
  \(\Cscr\) contains an extreme point of \(\Cscr\) and every extreme
  point of \(\Cscr=\Cscr_I\) is integral.

  \(\text{\eqref{item:2}} \Rightarrow \text{\eqref{item:3}}\):
  Immediate.

  \(\text{\eqref{item:3}} \Rightarrow \text{\eqref{item:4}}\): Suppose
  every minimal face of \(\Cscr\) contains an integral point.  Let \(w
  \in \Reals^n\).  Then, since \(\Cscr\) is nonempty, compact and
  convex,
  \[
  \argmax_{x \in \Cscr}{\iprod{w}{x}} \eqqcolon \Fscr
  \]
  is a nonempty (exposed) face of \(\Cscr\).  Every minimal face
  contained in \(\Fscr\) contains an integral point
  (by~part~\eqref{item:3}); hence, \(\Fscr\) contains an integral point.

  \(\text{\eqref{item:4}} \Rightarrow \text{\eqref{item:5}}\): Suppose
  \(\Cscr\) satisfies~\eqref{item:4}.  Let \(w \in \Integers^n\).
  Then, by~\eqref{item:4}, there exists \(\bar{x} \in \Cscr \cap
  \Integers^n\) such that
  \[
  \max_{x \in \Cscr}{\iprod{w}{x}} = \iprod{w}{\bar{x}}.
  \]
  Since \(w\) and~\(\bar{x}\) are integral, it follows that \(\max_{x
    \in \Cscr}{\iprod{w}{x}} \in \Integers\).

  \(\text{\eqref{item:5}} \Rightarrow \text{\eqref{item:6}}\): Suppose
  \(\Cscr\) has the property~\eqref{item:5}.  Let \(w \in
  \Rationals^n\).  Define
  \[
  \Fscr \coloneqq \argmax_{x \in \Cscr}{\iprod{w}{x}}.
  \]
  Let \(\mu\) be a positive rational such that \(\mu w \in
  \Integers^n\) and \(\gcd(\mu w_1, \dotsc, \mu w_n)=1\).  Then,
  \(\argmax_{x \in \Cscr}{\iprod{\mu w}{x}} = \Fscr\).  By
  property~\eqref{item:5}, \(\beta \coloneqq \max_{x \in
    \Cscr}{\iprod{\mu w}{x}} \in \Integers\).  Since
  \[
  \setst{x \in \Integers^n}{\iprod{\mu w}{x} = \beta} \neq \emptyset
  \iff
  \text{\(\gcd(\mu w_1, \dotsc, \mu w_n)\) divides \(\beta\)},
  \]
  and we have \(\gcd(\mu w_1, \dotsc, \mu w_n) = 1\), we are done.

  \(\text{\eqref{item:6}} \iff \text{\eqref{item:1}}\): Suppose
  \(\Cscr\) has property \eqref{item:6}.  Then, for every \(w \in
  \Integers^n\), \(\suppf{\Cscr}{w} \in \Integers\).  Therefore, by
  \Cref{cor:3.1}, \(\Cscr= \Cscr_I\).  The converse also follows from
  \Cref{cor:3.1}.

  \(\text{\eqref{item:5}} \iff \text{\eqref{item:7}} \iff \text{\eqref{item:8}}\): Let \(x_0 \in
  \Cscr\) and \(w \in \Integers^n\).  Set \(\Cscrtilde \coloneqq \Cscr
  - x_0\).  Then
  \begin{equation*}
    \suppf{\Cscr}{w}
    = \iprod{w}{x_0} + \suppf{\Cscrtilde}{w}
    = \iprod{w}{x_0}
    + \min\setst[\big]{
      \eta \in \Reals_+
    }{
      \iprod{w}{x} \leq \eta\,\forall x \in \Cscrtilde
    },
  \end{equation*}
  where in the last equation we use the fact that \(0 \in \Cscrtilde\)
  to add the constraint \(\eta \in \Reals_+\).  Finally, note that
  \begin{equation*}
    \min\setst[\big]{
      \eta \in \Reals_+
    }{
      \iprod{w}{x} \leq \eta\,\forall x \in \Cscrtilde
    }
    =
    \inf\setst[\big]{
      \eta \in \Reals_{++}
    }{
      \tfrac{1}{\eta} w \in \Cscrtilde^{\circ}
    }.\qedhere
  \end{equation*}
\end{proof}

In the quite common case that \(0 \in \Cscr\), \Cref{thm:3.main} shows
that \(\Cscr_I = \Cscr\) if and only if, for each \(w \in
\Integers^n\), we~have \(\inf\setst{\eta \in
  \Reals_{++}}{\tfrac{1}{\eta}w \in \Cscr^{\circ}} \in \Integers\).

Just as \Cref{thm:Edmonds-Giles} motivates the definition of total
dual integrality for LP formulations, one may use \Cref{cor:3.1} to
define total dual integrality more generally.  In the next section, we
shall define it for SDP formulations.

\section{Total Dual Integrality for SDPs}
\label{sec:sdp-tdi}

Before we define a semidefinite notion of total dual integrality, we
shall recall a few basic facts about the corresponding theory for
polyhedra.  Let \(A \in \Reals^{m \times n}\) be a matrix, and let \(b
\in \Reals^m\).  We say that the system \(Ax \leq b\) is
\emph{rational} if the entries of~\(A\) and~\(b\) are rational.  The
rational system of linear inequalities \(Ax \leq b\) is \emph{totally
  dual integral} (\emph{TDI}) if, for each \(c \in \Integers^m\), the
LP \(\min\setst{\iprodt{b}{y}}{A^{\transp}y = c,\, y \geq 0}\) dual to
\(\max\setst{\iprodt{c}{x}}{Ax \leq b}\) has an integral optimal
solution if its optimal value is finite.  If \(Ax \leq b\) is TDI and
\(b\) is integral, then the polyhedron \(P \coloneqq \setst{x \in
  \Reals^n}{Ax \leq b}\) is integral by~\Cref{thm:Edmonds-Giles}.  It
is important to emphasize that total dual integrality is an
\emph{algebraic} notion, rather than a \emph{geometric} one: it is not
the geometric object~\(P\) that is TDI, but rather the defining system
\(Ax \leq b\), which is not uniquely determined by~\(P\).  This
subtlety leads to some odd consequences, as we describe next.

A polyhedron \(P \subseteq \Reals^n\) is \emph{rational} if it is
determined by a rational system of linear inequalities.  It is
well~known~\cite[Theorem~22.6]{Schrijver86a} that every rational
polyhedron~\(P\) is defined by a TDI system \(Ax \leq b\) with
\(A\)~integral, and if \(P\) is integral, then~\(b\) may be chosen
integral.  This allows one to prove the odd fact that, for~every
rational system \(Ax \leq b\), there is a positive integer~\(t\) such
that the system \((\tfrac{1}{t}A)x \leq \tfrac{1}{t}b\) is TDI.

Next we move on to define a notion of total dual integrality for SDP
formulations.  We want to define when the defining system
\(\Acal(\Xhat) \leq b\), \(\Xhat \succeq 0\) for~\eqref{eq:chain-SDP}
is~TDI, but there is a further complication.  We may not need the dual
SDP to have an ``integral solution'' for \emph{every} integral
objective function \(\Xhat \mapsto \iprod{\Chat}{\Xhat}\).  As~the
formulation~\eqref{eq:theta-SDP} shows, for the
Lovász~\(\theta\)~function we are only interested in objective
functions of the form \(\Xhat \mapsto \iprod{\Diag(0\oplus
  w)}{\Xhat}\), perhaps with \(w \in \Reals^n\) integral.  The same
remark can be made about the diagonal embedding~\eqref{eq:LP-as-SDP}
of LPs as SDPs.  In these cases, one is interested only in the
\emph{diagonal} part of the variable~\(\Xhat\), and the lifting \(w
\mapsto \Diag(0 \oplus w)\) embeds in matrix space only the objective
functions that matter to us.  Note that this arises from the fact that
we are essentially dealing with extended (lifted) formulations.
However, when we look at the MaxCut~SDP in~\Cref{sec:maxcut}, we~shall
only be interested in objective functions of the form \(\Xhat \mapsto
\iprod{0 \oplus \Laplacian{G}(w)}{\Xhat}\), where \(\Laplacian{G}(w)
\in \Sym{V}\) is a weighted Laplacian matrix of the input graph~\(G\)
on vertex~set~\(V\), to be defined later; as~before, \(\Xhat \in
\Psdhat{\Vspc}\) is the variable.  In this case, one might argue the
we are only interested in the \emph{off-diagonal} (!)  entries of the
variable~\(\Xhat\).  Thus, when defining semidefinite TDIness,
we~shall need to refer to which objective functions (that~is, which
projection of the feasible region) we care about.  (This notion of
TDIness coupled with extended formulations already leads to an
interesting generalization of TDIness in the polyhedral case, as we
discuss in~\Cref{sec:conclusion}.)

We may now define a semidefinite notion of total dual integrality.
Below, the map~\(\Lcal\) is a lifting map, such as \(w \mapsto \Diag(0
\oplus w)\) and \(w \mapsto 0 \oplus \Laplacian{G}(w)\) from above.  The
corresponding projection, which will be the adjoint \(\Lcal^*\) of the
lifting~\(\Lcal\), will appear in~\Cref{thm:tdi-projection} below.

\begin{definition}
  \label{def:TDI}
  Let \(\Lcal \colon \Reals^k \to \Symhat{n}\) be a linear map.  The
  system \(\Acal(\Xhat) \leq b\), \(\Xhat \succeq 0\) is \emph{totally
    dual integral (TDI) through~\(\Lcal\)} if, for every integral \(c \in
  \Integers^k\), the SDP dual to
  \(\sup\setst{\iprod{\Lcal(c)}{\Xhat}}{\Acal(\Xhat) \leq b,\, \Xhat
    \succeq 0}\) has an integral optimal solution whenever it has an
  optimal solution.
\end{definition}

Note that, for~convenience, we use the term ``TDI'' to refer to two
separate notions, one for linear inequality systems of the form \(Ax
\leq b\), and another one for semidefinite systems of the form
\(\Acal(\Xhat) \leq b\), \(\Xhat \succeq 0\); the~context shall make
it clear to which notion we are referring.

\begin{theorem}
  \label{thm:tdi-projection}
  Let \(\Acal(\Xhat) \leq b\), \(\Xhat \succeq 0\) be totally dual
  integral through a linear map \(\Lcal \colon \Reals^k \to \Symhat{n}\).
  Set \(\Cscrhat \coloneqq \setst{\Xhat \in \Psdhat{n}}{\Acal(\Xhat)
    \leq b}\) and \(\Cscr \coloneqq \Lcal^*(\Cscrhat) \subseteq
  \Reals^k\).  If \(b\) is integral, \(\Cscr\) is compact, and
  \(\Cscrhat\) has a positive definite matrix, then \(\Cscr =
  \Cscr_I\).
\end{theorem}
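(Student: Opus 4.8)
The plan is to derive \(\Cscr = \Cscr_I\) from \Cref{cor:3.1}, whose hypothesis on a nonempty compact convex set is exactly that its support function be integer-valued on the integral vectors. First I would check that \(\Cscr\) meets the standing requirements of that corollary. Writing \(\Cscrhat = \setst{\Xhat \in \Psdhat{n}}{\Acal(\Xhat) \leq b}\), this set is convex and is nonempty because it contains a positive definite matrix by hypothesis; hence \(\Cscr = \Lcal^*(\Cscrhat)\) is convex and nonempty, and it is compact by assumption. So it remains to prove that \(\suppf{\Cscr}{c} \in \Integers\) for every \(c \in \Integers^k\).

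Fix such a~\(c\). The crux is the adjoint identity \(\iprodt{c}{\Lcal^*(\Xhat)} = \iprod{\Lcal(c)}{\Xhat}\), which gives
\[
  \suppf{\Cscr}{c}
  = \sup_{\Xhat \in \Cscrhat}\iprodt{c}{\Lcal^*(\Xhat)}
  = \sup_{\Xhat \in \Cscrhat}\iprod{\Lcal(c)}{\Xhat}
  = \sup\setst[\big]{\iprod{\Lcal(c)}{\Xhat}}{\Acal(\Xhat) \leq b,\, \Xhat \succeq 0},
\]
which is precisely the primal SDP that appears in \Cref{def:TDI}; this value is finite since it equals \(\suppf{\Cscr}{c}\) and \(\Cscr\) is nonempty and compact (note that \(\Cscrhat\) itself may well be unbounded). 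I would then invoke SDP strong duality. Because \(\Cscrhat\) contains a positive definite matrix, the primal SDP satisfies the relaxed Slater condition, and it has finite optimal value, so its dual \(\inf\setst{\iprodt{b}{y}}{y \in \Reals_+^m,\, \Acal^*(y) - \Lcal(c) \succeq 0}\) attains its optimum with no duality gap (the constraint qualification used for the equality in \Cref{thm:1}; see, e.g., \cite[Theorem~1.1]{Carli13a} or~\cite[Sec.~D.2.3]{BenTalN13a}). Since the system \(\Acal(\Xhat) \leq b\), \(\Xhat \succeq 0\) is TDI through~\(\Lcal\) and this dual SDP is now known to be solvable, it has an \emph{integral} optimal solution \((y,\Shat)\): that is, \(y \in \Integers_+^m\) and \(\Shat = \Acal^*(y) - \Lcal(c)\) satisfies~\eqref{eq:dual-int}. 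Its value \(\iprodt{b}{y}\) is then an integer, because \(b\) and~\(y\) are both integral; hence \(\suppf{\Cscr}{c} = \iprodt{b}{y} \in \Integers\). As \(c \in \Integers^k\) was arbitrary, \Cref{cor:3.1} yields \(\Cscr = \Cscr_I\).

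I expect the main obstacle to be the step where strong duality must provide \emph{attainment} in the dual SDP, not merely equality of optimal values: the definition of TDIness through~\(\Lcal\) only guarantees an integral optimal dual solution once an optimal dual solution is known to exist. This is exactly where the hypothesis that \(\Cscrhat\) contain a positive definite matrix is used — it is the relaxed Slater condition that makes the dual solvable — with compactness of~\(\Cscr\) supplying the matching finiteness of the primal objective. The remaining ingredients (the adjoint identity, the translation between the support function of~\(\Cscr\) and the primal SDP over~\(\Cscrhat\), and the integrality of \(\iprodt{b}{y}\)) are routine, and the overall structure mirrors the classical use of \Cref{thm:Edmonds-Giles} and \Cref{cor:Hoffman} in polyhedral TDIness theory.
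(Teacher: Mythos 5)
Your proposal is correct and follows essentially the same route as the paper's proof: rewrite \(\suppf{\Cscr}{c}\) as the primal SDP via the adjoint identity, use the relaxed Slater condition together with finiteness (from compactness of~\(\Cscr\)) to get a dual optimal solution, invoke TDIness through~\(\Lcal\) to make it integral, conclude \(\suppf{\Cscr}{c} = \iprodt{b}{y} \in \Integers\), and apply \Cref{cor:3.1}. You also correctly identify the one delicate point — that strong duality must supply dual \emph{attainment} before the TDI hypothesis can be used — which is exactly the role the positive definite feasible matrix plays in the paper's argument.
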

\begin{proof}
  Let \(w \in \Integers^k\).  Then
  \begin{equation}
    \label{eq:15}
    \suppf{\Cscr}{w}
    = \max_{\Xhat \in \,\Cscrhat} \iprod{w}{\Lcal^*(\Xhat)}
    = \max_{\Xhat \in \,\Cscrhat} \iprod{\Lcal(w)}{\Xhat}.
  \end{equation}
  The latter SDP satisfies the relaxed Slater condition by assumption
  and its optimal value is finite and attained by compactness
  of~\(\Cscr\).  By SDP~Strong~Duality, the dual SDP has an optimal
  solution.  Since \(\Acal(\Xhat) \leq b\), \(\Xhat \succeq 0\) is TDI
  through~\(\Lcal\), the dual SDP has an integral optimal solution
  \((y^*,\Shat^*)\).  Hence, \(\suppf{\Cscr}{w} = \iprodt{b}{y^*}\)
  and so \(\suppf{\Cscr}{w} \in \Integers\), since \(b\) is integral.
  It~follows from~\Cref{cor:3.1} that \(\Cscr = \Cscr_I\).
\end{proof}

We have established that total dual integrality is sufficient for
exact (primal) representations.  We next describe conditions under
which the chain of inequalities in~\Cref{thm:1} holds with equality
throughout, thus completing our discussion in~\Cref{sec:intro}
regarding equality throughout in~\Cref{thm:LP-chain}.

Again there is a more involved setup due to our choice of
embedding~\eqref{eq:3}.  Let \(\Cscr \subseteq [0,1]^k\) be a
convex~set.  Let \(\Lcal \colon \Reals^k \to \Symhat{n}\) be a linear
map, and let \(\Cscrhat \subseteq \Symhat{n}\).  We say that
\(\Cscrhat\) is a \emph{rank-one embedding} of~\(\Cscr_I\)
via~\(\Lcal\) if, for~each \(\xbar \in \set{0,1}^k\) there exists
\(\Xhat \in \Cscrhat\) such that \(\xbar = \Lcal^*(\Xhat)\) and
\(\Xhat\) has the form~\eqref{eq:3} for some \(U \subseteq V \coloneqq
[n]\).  One may think of~\(\Cscrhat\) as a convex set in (lifted)
matrix space, e.g., the feasible region of an~SDP, described
algebraically by a linear system \(\Acal(\Xhat) \leq b\), \(\Xhat
\succeq 0\) that includes~\eqref{eq:SDP-01}.  Then to have the
(lifted) rank-constrained SDP formulation
\(\sup\setst{\iprod{\Lcal(w)}{\Xhat}}{\Xhat \in \Cscrhat,\,
  \rank(\Xhat) = 1}\) be a correct relaxation for the combinatorial
optimization problem \(\max\setst{\iprodt{w}{x}}{x \in \Cscr \cap
  \set{0,1}^k}\) requires the conditions for \(\Cscrhat\) to be a
rank-one embedding of~\(\Cscr_I\).

In the case where \(\Lcal \colon w \in \Reals^V \mapsto 0 \oplus
\Diag(w)\) and \(\Cscr \subseteq [0,1]^V\), to say that the set
\(\Cscrhat \subseteq \Symhat{n}\) defined by a system \(\Acal(\Xhat)
\leq b\), \(\Xhat \succeq 0\) is a rank-one embedding of \(\Cscr_I\)
via~\(\Lcal\) requires that, for each \(\xbar \in \Cscr \cap
\set{0,1}^V\), we have
\[
\Acal\paren[\bigg]{
  \begin{bmatrix}
    1     & \xbar^{\transp}  \\[3pt]
    \xbar & \oprodsym{\xbar} \\
  \end{bmatrix}
} \leq b.
\]

\begin{theorem}
  \label{thm:eq-throughout}
  Let \(\Acal(\Xhat) \leq b\), \(\Xhat \succeq 0\) be totally dual
  integral through a linear map \(\Lcal \colon \Reals^k \to
  \Symhat{n}\) such that \(b\)~is~integral.  Suppose that \(\Cscrhat
  \coloneqq \setst{\Xhat \in \Psdhat{n}}{\Acal(\Xhat) \leq b}\) has a
  positive definite matrix and that \(\Cscr \coloneqq
  \Lcal^*(\Cscrhat) \subseteq [0,1]^k\) is compact.  If \(\Cscrhat\)
  is a rank-one embedding of~\(\Cscr_I\) via~\(\Lcal\), then for every
  \(w \in \Integers^k\), equality holds throughout in the chain of
  inequalities from~\Cref{thm:1} for \(\Chat \coloneqq \Lcal(w)\), all
  optimum values are equal to
  \begin{equation}
    \label{eq:28}
    \max\setst{\iprodt{w}{x}}{x \in \Cscr_I},
  \end{equation}
  and all suprema and infima are attained.
\end{theorem}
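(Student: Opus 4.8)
The plan is to chain together the results already established, using \Cref{thm:tdi-projection} as the workhorse. First I would fix $w \in \Integers^k$ and set $\Chat \coloneqq \Lcal(w)$. By \Cref{thm:tdi-projection} (whose hypotheses are exactly the ones assumed here: $b$ integral, $\Cscr$ compact, $\Cscrhat$ has a positive definite matrix), we get $\Cscr = \Cscr_I$. In particular $\Cscr$ is a polytope contained in $[0,1]^k$, so \eqref{eq:28} is attained, say at an integral vertex $\xbar \in \Cscr \cap \set{0,1}^k$ (every vertex of $\Cscr_I$ is integral). This takes care of the value and attainment for the outermost quantity in the chain.

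Next I would walk through the four quantities in \Cref{thm:1} from the outside in. For the $\sup$ in \eqref{eq:chain-SDP}: its value equals $\suppf{\Cscr}{w}$ by the identity \eqref{eq:15} from the proof of \Cref{thm:tdi-projection} (namely $\iprod{\Lcal(w)}{\Xhat} = \iprod{w}{\Lcal^*(\Xhat)}$), and this equals $\max\setst{\iprodt{w}{x}}{x\in\Cscr} = \max\setst{\iprodt{w}{x}}{x\in\Cscr_I}$, which is \eqref{eq:28}; it is attained because $\Cscr$ is compact and the relaxed Slater condition lets us invoke strong duality. For \eqref{eq:chain-SDP} $=$ \eqref{eq:chain-SDD} (the middle equality): the positive definite matrix in $\Cscrhat$ supplies the constraint qualification demanded by \Cref{thm:1}, and finiteness of the optimal value follows from compactness of $\Cscr$, so equality holds and the $\inf$ in \eqref{eq:chain-SDD} is attained. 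For the value of \eqref{eq:chain-ISDD}: TDIness of $\Acal(\Xhat)\leq b$, $\Xhat\succeq 0$ through $\Lcal$ gives, for this integral $c \coloneqq w$, an integral optimal dual solution $(y^*, \Shat^*)$ — it exists precisely because \eqref{eq:chain-SDD} has an optimal solution — so \eqref{eq:chain-ISDD} is attained, and its value $\iprodt{b}{y^*}$ equals the value of \eqref{eq:chain-SDD}. Thus the three outer quantities \eqref{eq:chain-SDP}, \eqref{eq:chain-SDD}, \eqref{eq:chain-ISDD} all coincide with \eqref{eq:28}, and all are attained.

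The only remaining link is the outermost inequality \eqref{eq:chain-ISDP} $\leq$ \eqref{eq:chain-SDP}; since the reverse inequality is already part of \Cref{thm:1}, it suffices to exhibit a feasible $\Xhat$ for \eqref{eq:chain-ISDP} with objective value $\geq$ \eqref{eq:28}. Here is where the rank-one embedding hypothesis does its job: take the integral optimizer $\xbar \in \Cscr\cap\set{0,1}^k$ from the first paragraph; because $\Cscrhat$ is a rank-one embedding of $\Cscr_I$ via $\Lcal$, there is $\Xhat_0 \in \Cscrhat$ of the form \eqref{eq:3} for some $U \subseteq [n]$ with $\Lcal^*(\Xhat_0) = \xbar$. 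Such a matrix satisfies \eqref{eq:primal-int} (it is itself a single rank-one summand of the required shape — indeed \eqref{eq:3} is precisely the $m_A$ all zero except $m_U = 1$ case of \eqref{eq:primal-int}), so $\Xhat_0$ is feasible in \eqref{eq:chain-ISDP}; and its objective value is $\iprod{\Lcal(w)}{\Xhat_0} = \iprod{w}{\Lcal^*(\Xhat_0)} = \iprodt{w}{\xbar} = $ \eqref{eq:28}. Hence \eqref{eq:chain-ISDP} $\geq$ \eqref{eq:28} $=$ \eqref{eq:chain-SDP}, forcing equality and attainment throughout.

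I expect the genuinely substantive step to be the one I leaned on from \Cref{thm:tdi-projection}, namely establishing $\Cscr = \Cscr_I$ so that we even have a polytope with integral vertices to optimize over — but that work is already done. Within this proof the only place requiring care is bookkeeping: making sure the optimal dual solution guaranteed by TDIness is the one whose existence is ensured by strong duality applied to the \emph{projected} objective $\Lcal(w)$ (not some other objective), and checking that the rank-one embedding condition is being applied to the correct integral point $\xbar$, i.e.\ one that is actually optimal for \eqref{eq:28} rather than merely feasible. No new analytic input beyond \Cref{thm:1}, \Cref{thm:tdi-projection}, \Cref{cor:3.1}, and the definition of rank-one embedding should be needed.
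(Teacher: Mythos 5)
Your proposal is correct and follows essentially the same route as the paper: invoke \Cref{thm:tdi-projection} to get \(\Cscr = \Cscr_I\), use the relaxed Slater condition plus compactness for \eqref{eq:chain-SDP} \(=\) \eqref{eq:chain-SDD} with dual attainment, use TDIness for \eqref{eq:chain-SDD} \(=\) \eqref{eq:chain-ISDD}, and use the rank-one embedding of an optimal integral point \(\xbar\) to close the gap between \eqref{eq:chain-ISDP}, \eqref{eq:chain-SDP}, and~\eqref{eq:28}. One cosmetic remark: primal attainment for \eqref{eq:chain-SDP} does not follow from strong duality alone as your second paragraph suggests, but your construction of \(\Xhat_0\) achieving the common value already supplies it, exactly as in the paper's argument.
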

\begin{proof}
  Fix \(w \in \Integers^k\) and set \(\Chat \coloneqq \Lcal(w)\)
  throughout the proof.  Note that the optimal value
  of~\eqref{eq:chain-SDP} is bounded above, since each \(\Xhat \in
  \Cscrhat\) has objective value \(\iprod{\Lcal(w)}{\Xhat} =
  \iprod{w}{\Lcal^*(\Xhat)} \leq \suppf{\Cscr}{w} < \infty\) by
  compactness.  Since the relaxed Slater condition holds by
  assumption, SDP Strong Duality shows that \eqref{eq:chain-SDD} has
  an optimal solution, and hence is feasible.  Together with the TDI
  assumption, this shows that \eqref{eq:chain-SDP},
  \eqref{eq:chain-SDD}, and \eqref{eq:chain-ISDD} have the same
  optimal values and the latter two are attained.

  It~remains to prove that~\eqref{eq:chain-SDP},
  \eqref{eq:chain-ISDP}, and~\eqref{eq:28} have the same optimal
  values and are attained.  Let \(\xbar\) be an optimal solution for
  \(\max\setst{\iprodt{w}{x}}{x \in \Cscr \cap \set{0,1}^k}\).  Then
  there exists \(\Xbar \in \Cscrhat\) that
  satisfies~\eqref{eq:primal-int} such that \(\xbar =
  \Lcal^*(\Xbar)\).  Then the optimal value of~\eqref{eq:28} is
  \(\iprodt{w}{\xbar} = \iprod{w}{\Lcal^*(\Xbar)} =
  \iprod{\Chat}{\Xbar}\), which is upper bounded by the optimal value
  of~\eqref{eq:chain-ISDP}.  On the other hand, as shown above the
  optimal value of~\eqref{eq:chain-SDP} is upper bounded by
  \(\suppf{\Cscr}{w} = \suppf{\Cscr_I}{w} = \iprodt{w}{\xbar}\) since
  \(\Cscr = \Cscr_I\) by \Cref{thm:tdi-projection}.  Hence, \(\xbar\)
  is optimal in~\eqref{eq:28}, and \(\Xbar\) is optimal
  in~\eqref{eq:chain-ISDP} and~\eqref{eq:chain-SDP}, all with the same
  objective values.
\end{proof}

Naturally, any other choice of (i) embedding in some lifted space and
(ii) integrality conditions would require an adaptation of the
definition of ``rank-one embedding'' of~\(\Cscr_I\) via a lifting map,
if only to ensure that the lifted representation~\(\Cscrhat\) is a
correct formulation of~\eqref{eq:28}.

The next result characterizes TDIness for the diagonal
embedding~\eqref{eq:LP-as-SDP} of LPs.  It shows that our notion of
semidefinite TDIness is the same as the polyhedral notion, given the
limitation in our model that only deals with binary variables:
\begin{theorem}
  \label{thm:SDP-TDI-LP}
  Let \(Ax \leq b\) be a rational system of linear inequalities.  The
  system defining~\eqref{eq:LP-as-SDP} is TDI through \(w \in \Reals^V
  \mapsto \Diag(0 \oplus w)\) if and only if the system \(Ax \leq b\),
  \(0 \leq x \leq \ones\) is TDI.
\end{theorem}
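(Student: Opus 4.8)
The plan is to unwind both sides of the claimed equivalence into statements about the same family of integer linear programs, and then match them up. The key observation is that \Cref{prop:theta-int-dual} already did exactly this kind of computation for the theta SDP: adding the dual integrality constraint \eqref{eq:dual-int} together with integrality of the remaining dual variables collapses the dual SDP into an explicit combinatorial LP in the multiplicity vector $m$. I would perform the analogous reduction here. Starting from \eqref{eq:LP-as-SDP-dual}, imposing \eqref{eq:dual-int} on $\Shat$ and integrality of $\eta,u,y,Z$, the computation in \eqref{eq:12}--\eqref{eq:13} shows that the integer dual SDP of \eqref{eq:LP-as-SDP}, for objective $\Lcal(c)=\Diag(0\oplus c)$, is precisely
\[
  \min\setst[\big]{
    \iprodt{\ones}{u} + \iprodt{b}{y}
  }{
    A^{\transp}y + u \geq c,\;
    y \in \Integers_+^m,\;
    u \in \Integers_+^n
  },
\]
which is exactly the LP dual of $\max\setst{\iprodt{c}{x}}{Ax\le b,\,0\le x\le\ones}$ with its dual variables required to be integral (the $u$ variables being the duals of $x\le\ones$). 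This identification is the technical heart and is essentially already in the excerpt via~\eqref{claim:LP-as-SDP-dual-int-equiv}.

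**Then** the equivalence becomes almost a tautology. Fix $c\in\Integers^n$. On the polyhedral side, $Ax\le b,\,0\le x\le\ones$ is TDI means: the LP dual above has an integral optimal solution $(y,u)$ whenever it has any optimal solution. On the SDP side, the system defining~\eqref{eq:LP-as-SDP} is TDI through $w\mapsto\Diag(0\oplus w)$ means: the SDP dual~\eqref{eq:LP-as-SDP-dual} has an integral optimal solution (in the sense of \Cref{thm:1}, i.e.\ satisfying \eqref{eq:dual-int} with $y,\eta,u,Z$ integral) whenever it has an optimal solution. So I must show two things: first, that the continuous dual SDP~\eqref{eq:LP-as-SDP-dual} has an optimal solution if and only if the continuous polyhedral LP dual does (equivalently, that the two relaxations have the same finite/infinite status and attainment behavior); and second, that when optima exist, an integral optimal solution of one yields an integral optimal solution of the other. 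The first point follows because \eqref{eq:LP-as-SDP} is feasible and bounded exactly when $\max\setst{\iprodt{c}{x}}{Ax\le b,\,0\le x\le\ones}$ is (the rank-one matrices~\eqref{eq:3} over $x\in[0,1]^n$ exhibit feasibility, and the diagonal relaxation is sandwiched appropriately), together with SDP/LP strong duality under the Slater point $\Xhat=I/(n+1)$ scaled suitably — or one argues directly via the dictionary. The second point is the content of the reduction to~\eqref{eq:13}: the integral feasible solutions of~\eqref{eq:LP-as-SDP-dual} are in objective-preserving bijection with the integral feasible solutions of the polyhedral LP dual.

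**The main obstacle** I anticipate is the "only if" direction, or more precisely handling the case where the \emph{continuous} dual SDP~\eqref{eq:LP-as-SDP-dual} attains an optimum but this does not by itself hand us an optimum of the cleaner LP~\eqref{eq:13} — one needs to check that integrality of the SDP dual optimum forces the extra variables ($Z$, and the off-diagonal slack structure) into the special form making~\eqref{eq:12} equivalent to~\eqref{eq:13}, and that no spurious obstruction to attainment is introduced by the lifting. Concretely: given that $Ax\le b,\,0\le x\le\ones$ is TDI, take any optimal solution of the continuous SDP dual; SDP strong duality (Slater holds) gives that the common optimal value equals $\max\setst{\iprodt{c}{x}}{Ax\le b,\,0\le x\le\ones}$, which is an integer by LP TDIness and integrality of $b$; pick an integral LP-dual optimum $(y,u)$ and lift it to an integral SDP-dual solution via $(\eta,Z,y,\Shat,u)=(\iprodt{\ones}{u}+\iprodt{b}{y},\,0,\,y,\,\sum_i u_i\,\oprodsym{[-1;e_i]}\ \text{(form }\eqref{eq:dual-int}),\,u)$, checking feasibility in~\eqref{eq:7} by reversing the computation~\eqref{eq:12}. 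Conversely, given SDP-TDIness, an integral SDP-dual optimum reduces via~\eqref{eq:12} to an integral feasible point of~\eqref{eq:13}, hence an integral LP-dual optimum. The remaining routine verification — that the reduction~\eqref{eq:12}$\Rightarrow$\eqref{eq:13} is reversible and objective-preserving, and that $\supp(m)\subseteq\binom{V}{1}$ is forced by $Z\ge0$ — is exactly the argument already carried out in the text preceding~\eqref{eq:13}, so I would cite it rather than repeat it.
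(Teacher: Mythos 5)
Your proposal is correct and follows essentially the same route as the paper: the paper's entire proof is ``Immediate from~\eqref{claim:LP-as-SDP-dual-int-equiv}'', i.e.\ it rests exactly on the reduction of the integer dual SDP of~\eqref{eq:LP-as-SDP} to the system~\eqref{eq:13}, which is the identification you place at the heart of your argument. The additional care you take with attainment and with lifting an integral LP-dual optimum back to an integral solution of~\eqref{eq:LP-as-SDP-dual} only makes explicit what the paper leaves implicit.
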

\begin{proof}
  Immediate from~\eqref{claim:LP-as-SDP-dual-int-equiv}.
\end{proof}

Together with~\Cref{thm:SDP-TDI-LP}, \Cref{thm:eq-throughout} yields a
richer version of equality throughout in the chain
from~\Cref{thm:LP-chain}, since it includes the LP case via the
diagonal embedding~\eqref{eq:LP-as-SDP} as well as other, lifted
formulations; see, e.g., \Cref{thm:theta-TDI} in the next section.
\Cref{thm:eq-throughout} yields further results when the lifting map
involves the Laplacian of a graph~\(G\), i.e., when~\(\Lcal\) has the
form \(w \mapsto 0 \oplus \Laplacian{G}(w)\) as discussed
before~\Cref{def:TDI}.  In this case, we leave it to the reader to
check exactly how the set \(\Cscr\) must be related to the (incidence~vectors~of) cuts of~\(G\).

\section{Integrality in the Theta Function Formulation}
\label{sec:theta-tdi}

In this section, we prove that the formulation~\eqref{eq:theta-SDP}
for the Lovász \(\theta\) function of a graph~\(G\) is TDI through the
appropriate lifting if and only if \(G\) is perfect.

Let \(G = (V,E)\) be a graph.  We say that \(G\) is \emph{perfect} if
\(\omega(G[U]) = \chi(G[U])\) for every \(U \subseteq V\).  For~each
\(w \colon V \to \Reals\), the \emph{weighted stability number}
\(\alpha(G,w)\) of~\(G\) with respect to~\(w\) is
\begin{equation}
  \label{eq:def-alpha}
  \alpha(G,w)
  \coloneqq
  \max\setst{
    \iprodt{w}{\incidvector{U}}
  }{
    \text{\(U \subseteq V\) stable}
  }.
\end{equation}

A subset~\(\Cscr\) of~\(\Reals_+^n\) is a \emph{convex corner}
if~\(\Cscr\) is a compact convex set with nonempty interior and such
that \(0 \leq y \leq x \in \Cscr\) implies \(y \in \Cscr\).  Associate
with each graph \(G = (V,E)\) the following convex corners:
\begin{gather*}
  \STAB(G) \coloneqq \conv\setst{\incidvector{U}}{\text{\(U \subseteq V\) stable}},
  \\
  \THbody'(G) \coloneqq \setst{
    \diag(\Xhat[V])
  }{
    \text{\(\Xhat\) feasible in~\eqref{eq:theta'-SDP}}
  },
  \\
  \THbody(G) \coloneqq \setst{
    \diag(\Xhat[V])
  }{
    \text{\(\Xhat\) feasible in~\eqref{eq:theta-SDP}}
  },
  \\
  \THbody^+(G) \coloneqq \setst{
    \diag(\Xhat[V])
  }{
    \text{\(\Xhat\) feasible in~\eqref{eq:theta+-SDP}}
  },
  \\
  \QSTAB(G) \coloneqq \setst{
    x \in \Reals_+^V
  }{
    \iprodt{\incidvector{K}}{x} \leq 1\,\forall K \in \Kcal(G)
  }.
\end{gather*}
A strong form of Lovász sandwich theorem~\cite{Lovasz79a} is
that
\begin{equation}
  \label{eq:strong-sandwich}
  \STAB(G)
  \subseteq \THbody'(G)
  \subseteq \THbody(G)
  \subseteq \THbody^+(G)
  \subseteq \QSTAB(G).
\end{equation}
The following result is well known; we include a sketch of its proof
for completeness.
\begin{theorem}
  \label{thm:perfect-TFAE}
  Let \(G\) be a graph.  The following are equivalent:
  \begin{enumerate}[(i)]
    \setlength{\itemsep}{3pt}
  \item \(G\) is perfect;
  \item \(\overline{G}\) is perfect;
  \item\label{item:perfect-TFAE-STAB-QSTAB} \(\STAB(G) = \QSTAB(G)\);
  \item the system \(x \geq 0\), \(\iprodt{\incidvector{K}}{x} \leq
    1\) \(\forall K \in \Kcal(G)\) defining~\(\QSTAB(G)\) is TDI;
  \item\label{item:perfect-TFAE-alpha-chi} \(\alpha(G,w) = \overline{\chi}(G,w)\) for each \(w \colon V
    \to \Integers\);
  \item\label{item:perfect-TFAE-TH} \(\THbody(G)\) is a polytope;
  \item\label{item:perfect-TFAE-TH'} \(\THbody'(G)\) is a polytope;
  \item\label{item:perfect-TFAE-TH+} \(\THbody^+(G)\) is a polytope.
  \end{enumerate}
\end{theorem}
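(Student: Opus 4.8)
The plan is to prove the eight conditions equivalent in two groups. Conditions (i)--(v) are classical: I~would quote Lovász's perfect graph theorem for the equivalence of (i) and~(ii), Chvátal's theorem that $\STAB(G)=\QSTAB(G)$ characterizes perfection for the equivalence of (i) and~(iii), and Fulkerson's theorem (via the replication lemma) for the implications (i) $\Rightarrow$ (iv) and (i) $\Rightarrow$ (v); see, e.g., \cite{Schrijver03a}. What then remains in this group is the elementary glue reducing (iv) and~(v) back to~(iii); afterwards, (vi)--(viii) will be reduced to~(iii) through the sandwich \eqref{eq:strong-sandwich}, with the reverse implications cited.

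For the glue: since the defining system of $\QSTAB(G)$ has integral right-hand side~$\ones$, TDIness together with \Cref{thm:Edmonds-Giles} (or \Cref{cor:Hoffman}, as $\QSTAB(G)$ is bounded) shows that $\QSTAB(G)$ is an integral polytope, whence $\QSTAB(G)=\conv\setst{\incidvector{U}}{U\subseteq V\text{ stable}}=\STAB(G)$; this gives (iv) $\Rightarrow$ (iii). For (iv) $\Rightarrow$ (v), fix $c\in\Integers^V$: the LP dual of $\max\setst{\iprodt{c}{x}}{x\in\QSTAB(G)}$ is the linear relaxation of~\eqref{eq:clique-covering} with $w\coloneqq c$, which is feasible with finite optimal value; TDIness produces an integral optimal clique cover, so this common value is $\overline{\chi}(G,c)$, while LP~duality together with (iv) $\Rightarrow$ (iii) and $\suppf{\STAB(G)}{c}=\alpha(G,c)$ identifies it with $\alpha(G,c)$. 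For (v) $\Rightarrow$ (iii), one always has $\alpha(G,c)\le\suppf{\QSTAB(G)}{c}\le\overline{\chi}(G,c)$ for every $c\in\Integers^V$ (the first inequality because $\STAB(G)\subseteq\QSTAB(G)$, the second by LP~duality together with the fact that the linear relaxation of~\eqref{eq:clique-covering} is a lower bound for~\eqref{eq:clique-covering} itself), so hypothesis~(v) forces $\suppf{\QSTAB(G)}{c}=\alpha(G,c)=\suppf{\STAB(G)}{c}$ for all $c\in\Integers^V$; applying \Cref{rational-approx} to the compact convex sets $\STAB(G)$ and $\QSTAB(G)$ then yields $\QSTAB(G)=\STAB(G)$.

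For the theta-body conditions, one direction is immediate from \eqref{eq:strong-sandwich}: if (iii) holds, then $\STAB(G)=\THbody'(G)=\THbody(G)=\THbody^+(G)=\QSTAB(G)$, so each of $\THbody(G)$, $\THbody'(G)$, $\THbody^+(G)$ equals the polytope $\STAB(G)$, giving (iii) $\Rightarrow$ (vi) $\wedge$ (vii) $\wedge$ (viii). For the converses---that polytopality of any one of these bodies already forces $G$ to be perfect---I~would invoke the theorem of Grötschel, Lovász, and Schrijver that $\THbody(G)$ is a polytope if and only if $G$ is perfect, together with the analogous statements for Schrijver's body $\THbody'(G)$ and Szegedy's body $\THbody^+(G)$; see \cite{GroetschelLS93a} and~\cite{CarliT17a}, and the original sources \cite{McElieceRR78a,Schrijver79a,Szegedy94a}. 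Antiblocking duality for the theta bodies, which sends $\THbody(G)$, $\THbody'(G)$, $\THbody^+(G)$ to $\THbody(\overline{G})$, $\THbody^+(\overline{G})$, $\THbody'(\overline{G})$ respectively and preserves polytopality of a convex corner, is consistent with these converses and with the equivalence of (i) and~(ii).

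The step I~expect to be the real obstacle is precisely that converse for the theta bodies: showing that mere polytopality of $\THbody(G)$ (or $\THbody'(G)$ or $\THbody^+(G)$) forces perfection, as opposed to the easy direction in which the theta bodies collapse onto $\STAB(G)$ once $G$ is known to be perfect. This is the deepest classical ingredient---it rests on Lovász's theory of orthonormal representations of graphs---so, in keeping with the statement that \Cref{thm:perfect-TFAE} is well known, I~would cite it rather than reprove it. The genuinely new content is only the organizational glue, namely the sandwich \eqref{eq:strong-sandwich} and \Cref{thm:Edmonds-Giles}, which wires the SDP-flavored conditions (vi)--(viii) and the TDIness condition~(iv) into the classical circle of equivalences for perfect graphs.
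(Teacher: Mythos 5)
Your proposal is correct and follows essentially the same route as the paper: cite the classical perfect-graph equivalences for (i)--(v), get (iii) $\Rightarrow$ (vi), (vii), (viii) from the sandwich~\eqref{eq:strong-sandwich}, and delegate the hard converses (polytopality of a theta body forces perfection) to Gr\"otschel--Lov\'asz--Schrijver together with the antiblocking/facet facts for $\THbody'$ and $\THbody^+$ from~\cite{CarliT17a}. The extra glue you supply for (iv) and (v)---Edmonds--Giles for (iv) $\Rightarrow$ (iii), LP duality for (iv) $\Rightarrow$ (v), and \Cref{rational-approx} for (v) $\Rightarrow$ (iii)---is sound but covers ground the paper simply attributes to \cite[Ch.~9]{GroetschelLS93a}.
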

\begin{proof}
  Most equivalences can be seen in~\cite[Ch.~9]{GroetschelLS93a},
  except for~\eqref{item:perfect-TFAE-TH'}
  and~\eqref{item:perfect-TFAE-TH+}, involving~\(\THbody'(G)\)
  and~\(\THbody^+(G)\).  It is clear
  that~\eqref{item:perfect-TFAE-STAB-QSTAB}
  and~\eqref{eq:strong-sandwich} imply
  both~\eqref{item:perfect-TFAE-TH'}
  and~\eqref{item:perfect-TFAE-TH+}.  When proving
  that~\eqref{item:perfect-TFAE-TH}
  implies~\eqref{item:perfect-TFAE-STAB-QSTAB},
  \cite[Cor~9.3.27]{GroetschelLS93a} relies on the facts that the
  antiblocker of~\(\THbody(G)\) is~\(\THbody(\overline{G})\) and that
  the nontrivial facets of~\(\THbody(G)\) are determined by the clique
  inequalities \(\iprodt{\incidvector{K}}{x} \leq 1\) for each \(K \in
  \Kcal(G)\).  It is well known that the antiblocker
  of~\(\THbody'(G)\) is~\(\THbody^+(\overline{G})\) and that the
  nontrivial facets of both~\(\THbody'(G)\) and~\(\THbody^+(G)\) are
  determined by the same clique inequalities above.  The~interested
  reader can find complete, unified proofs
  in~\cite[Theorem~24]{CarliT17a}.  These facts are sufficient to
  adapt the proof from~\cite[Cor.~9.3.27]{GroetschelLS93a} to show
  that each of~\eqref{item:perfect-TFAE-TH'}
  and~\eqref{item:perfect-TFAE-TH+}, separately,
  implies~\eqref{item:perfect-TFAE-STAB-QSTAB}.
\end{proof}

We can now characterize TDIness for~\(\theta\) via perfection.  We
comment in the proof below the modifications to obtain analogous
results for the formulations~\eqref{eq:theta'-SDP}
and~\eqref{eq:theta+-SDP}, of~\(\theta'\) and~\(\theta^+\),
respectively.
\begin{theorem}
  \label{thm:theta-TDI}
  Let \(G = (V,E)\) be a graph.  The defining system for the SDP
  formulation of Lovász~\(\theta\) function in~\eqref{eq:theta-SDP} is
  TDI through \(w \in \Reals^V \mapsto \Diag(0 \oplus w)\) if and only
  if \(G\)~is~perfect.
\end{theorem}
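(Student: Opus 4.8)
The plan is to deduce this characterization from the general machinery built in the preceding sections, combined with the combinatorial identification of the integer dual SDP carried out in \Cref{prop:theta-int-dual} and the classical equivalences of \Cref{thm:perfect-TFAE}. The key observation is that, by \Cref{prop:theta-int-dual}, the integer dual SDP of~\eqref{eq:theta-SDP} (through the lifting $w \mapsto \Diag(0 \oplus w)$) is nothing but the clique covering problem~\eqref{eq:clique-covering}, with optimal value $\overline{\chi}(G,w)$. Moreover, by the strong sandwich relation~\eqref{eq:strong-sandwich} and the definition of $\THbody(G)$, the projection $\Lcal^*(\Cscrhat)$ of the feasible region of~\eqref{eq:theta-SDP} under $\Lcal^*(\Xhat) = \diag(\Xhat[V])$ is exactly $\THbody(G)$, which is compact; and~\eqref{eq:theta-SDP} has a positive definite feasible solution (e.g., a small perturbation of $\tfrac{1}{\card{V}+1}I$, or more carefully a scaled version of the matrix built from the uniform point in the interior of a convex corner), so the relaxed Slater condition holds. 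These are the hypotheses needed for \Cref{thm:tdi-projection} and \Cref{thm:eq-throughout}.

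First I would prove the ``only if'' direction. Suppose the system defining~\eqref{eq:theta-SDP} is TDI through $\Lcal$. Then \Cref{thm:tdi-projection} applies (with $b$ integral, $\Cscr = \THbody(G)$ compact, and the relaxed Slater condition), yielding $\THbody(G) = \THbody(G)_I$. Since $\THbody(G)_I$ is a polytope, $\THbody(G)$ is a polytope, which by item~\eqref{item:perfect-TFAE-TH} of \Cref{thm:perfect-TFAE} means $G$ is perfect.

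For the ``if'' direction, suppose $G$ is perfect. I would show directly that for every $w \in \Integers^V$ the dual SDP~\eqref{eq:chain-SDD} for $\Chat = \Diag(0 \oplus w)$ has an integral optimal solution whenever it has an optimal solution. By \Cref{prop:theta-int-dual}\eqref{prop:theta-int-dual-cover-to-int}, any optimal clique cover $m$ realizing $\overline{\chi}(G,w)$ (which exists and is finite provided $\overline{\chi}(G,w)$ is finite, i.e.\ provided $w \geq 0$ on the support of some clique cover — here one reduces to $w \geq 0$ since vertices of negative weight contribute nothing and can be covered with multiplicity zero, a point to handle carefully) yields an integral feasible dual solution $(\Shat,\eta,u,y,z)$ with $\eta = \iprodt{\ones}{m} = \overline{\chi}(G,w)$. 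Since $G$ is perfect, item~\eqref{item:perfect-TFAE-alpha-chi} of \Cref{thm:perfect-TFAE} gives $\overline{\chi}(G,w) = \alpha(G,w)$; and by the sandwich relation together with $\STAB(G) = \QSTAB(G)$ (item~\eqref{item:perfect-TFAE-STAB-QSTAB}) we get $\THbody(G) = \STAB(G)$, so the primal optimal value of~\eqref{eq:theta-SDP} is $\max\setst{\iprodt{w}{x}}{x \in \THbody(G)} = \alpha(G,w)$. Hence the integral dual solution above attains the common primal–dual optimal value, so it is an integral \emph{optimal} dual solution. When the dual has no optimal solution there is nothing to prove, so TDIness through $\Lcal$ follows.

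The main obstacle I anticipate is bookkeeping around unboundedness and sign issues in $w$: the clique covering problem~\eqref{eq:clique-covering} only has finite value when $w$ is dominated by a clique cover, and one must check that in exactly the cases where the dual SDP~\eqref{eq:chain-SDD} has an optimal solution (equivalently, is bounded), the corresponding clique cover exists and is finite — and conversely that when $w \not\leq 0$ forces infeasibility or unboundedness, the definition of TDIness through $\Lcal$ is vacuously satisfied. A secondary technical point is exhibiting an explicit positive definite feasible solution of~\eqref{eq:theta-SDP} to invoke the relaxed Slater condition; this is routine but must be stated. For the analogous statements for $\theta'$ and $\theta^+$ one uses items~\eqref{item:perfect-TFAE-TH'} and~\eqref{item:perfect-TFAE-TH+} of \Cref{thm:perfect-TFAE} in place of~\eqref{item:perfect-TFAE-TH}, together with the fact, noted after \Cref{prop:theta-int-dual}, that the integer dual SDPs of all three variants are equivalent to clique covering; the rest of the argument is verbatim the same.
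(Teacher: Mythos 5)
Your proposal is correct and follows essentially the same route as the paper: necessity via \Cref{thm:tdi-projection} together with the equivalence ``\(\THbody(G)\) is a polytope \(\iff\) \(G\) perfect'' from \Cref{thm:perfect-TFAE}, and sufficiency via \Cref{prop:theta-int-dual} combined with \(\alpha(G,w)=\overline{\chi}(G,w)\), certifying dual optimality by weak duality. The sign/unboundedness worries you raise are moot, since the clique covering problem \(\sum_K m_K\incidvector{K}\geq w\) is feasible (with finite value) for every integral \(w\) by using singleton cliques, so an integral optimal dual solution exists for every integral objective.
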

\begin{proof}
  We start with sufficiency.  Suppose \(G\) is perfect.  Let \(w
  \colon V \to \Integers\).  Let \(U \subseteq V\) be a stable set
  of~\(G\) such that \(\alpha(G,w) = \iprodt{w}{\incidvector{U}}\), so
  that \(\Xhat\) defined as in~\eqref{eq:3} is feasible
  in~\eqref{eq:theta-SDP} with objective value \(\alpha(G,w)\).  Then
  by item~\eqref{item:perfect-TFAE-alpha-chi}
  in~\Cref{thm:perfect-TFAE} there exists a clique cover \(m\)
  of~\(G\) with respect to~\(w\) such that \(\iprodt{\ones}{m} =
  \alpha(G,w)\).  Hence, \Cref{prop:theta-int-dual} shows that there
  is an integral dual solution \((\Shat,\eta,u,y,z)\) for the dual SDP
  of~\eqref{eq:theta-SDP} with objective value \(\eta =
  \iprodt{\ones}{m} = \alpha(G,w)\), which is the same as the
  objective value of~\(\Xhat\).  Hence, \((\Shat,\eta,u,y,z)\) is
  optimal for the dual SDP of~\eqref{eq:theta-SDP} by weak duality.
  Note in~fact that \Cref{prop:theta-int-dual} shows that
  \((\Shat,\eta,u,y,z)\) is an integer dual solution also for the dual
  SDPs of~\eqref{eq:theta'-SDP} and~\eqref{eq:theta+-SDP}.

  Now we move to necessity.  Suppose the defining system is TDI
  through \(\Diag(0 \oplus \cdot)\).  By~\Cref{thm:tdi-projection},
  it~follows that \(\THbody(G) = \THbody(G)_I\), hence \(\THbody(G)\)
  is a polytope and \(G\) is perfect by~\Cref{thm:perfect-TFAE}.  Note
  that the equivalences~\eqref{item:perfect-TFAE-TH'}
  and~\eqref{item:perfect-TFAE-TH+} in~\Cref{thm:perfect-TFAE} also
  show that the defining systems for~\(\theta'\) and~\(\theta^+\) can
  only be~TDI if~\(G\) is perfect.
\end{proof}

\section{Dual Integrality for the MaxCut SDP}
\label{sec:maxcut}

Let \(G = (V,E)\) be a graph.  A \emph{cut} in~\(G\) is a set of edges
of the form
\begin{equation}
  \label{def:cut}
  \delta(U)
  \coloneqq
  \setst{e \in E}{\card{e \cap U} = 1}
\end{equation}
for some \(U \subseteq V\) such that \(\emptyset \neq U \neq V\).  The
\emph{maximum cut problem} (or MaxCut problem) is to find, given a
graph \(G = (V,E)\) and \(w \colon E \to \Reals_+\), an optimal
solution for
\({
  \max\setst{
    \iprodt{w}{\incidvector{\delta(U)}}
  }
  {
    \emptyset \neq U \subsetneq V
  }
}\).
(We shall discuss nonnegativity of~\(w\) and related issues
in~\Cref{sec:maxcut-nonneg}.)  It is well known that, by using the
embedding \(U \in \Powerset{V} \mapsto \oprodsym{s_U} \in \Sym{V}\)
with \(s_U \coloneqq 2\incidvector{U}-\ones\), i.e.,
\begin{equation}
  \label{eq:plus-minus-embedding}
  (s_U)_i
  =
  (-1)^{[i \not\in U]}
  \qquad
  \forall i \in V,
\end{equation}
one may reformulate the MaxCut problem exactly by adding the
constraint ``\(\rank(Y) = 1\,\)'' to the SDP
\begin{equation}
  \label{eq:maxcut-sdp}
  \begin{array}[!h]{rll}
    \text{Maximize}   & \iprod{\tfrac{1}{4}\Laplacian{G}(w)}{Y} &                  \\[4pt]
    \text{subject to} & \iprod{\oprodsym{e_i}}{Y} = 1           & \forall i \in V, \\[4pt]
                      & Y \in \Psd{V};                          &                  \\
  \end{array}
\end{equation}
here, \(\Laplacian{G} \colon \Reals^E \to \Sym{V}\) is the \emph{Laplacian} of
the graph~\(G\), defined as
\begin{equation}
  \label{eq:def-Laplacian}
  \Laplacian{G}(w) \coloneqq \sum_{ij \in E} w_{ij} \oprodsym{(e_i-e_j)}
  \qquad
  \forall w \in \Reals^E.
\end{equation}
It~is not hard to check that
\(\qform{\Laplacian{G}(w)}{\incidvector{U}} = \tfrac{1}{4}
\qform{\Laplacian{G}(w)}{s_U} = \iprodt{w}{\incidvector{\delta(U)}}\)
for each \(U \subseteq V\), with \(s_U\) defined as
in~\eqref{eq:plus-minus-embedding}.  We call~\eqref{eq:maxcut-sdp} the
\emph{MaxCut SDP}.  It is one of the most famous~SDPs, since it was
used by Goemans and Williamson~\cite{GoemansW95a} in their seminal
approximation algorithm and its analysis.

We discuss the drawbacks of the rank-one constraint for the dual SDP
of~\eqref{eq:maxcut-sdp} in \Cref{sec:rank-maxcut}, and
in~\Cref{sec:maxcut-dual-int} we study the integer dual SDP for the
MaxCut SDP with objective functions of the form \(X \mapsto
\iprod{\tfrac{1}{4} \Laplacian{G}(w)}{X}\) for every \(w \in
\Reals_+^E\).

\subsection{Rank-One Constraint in Dual of the MaxCut SDP}
\label{sec:rank-maxcut}

In this section, we show that the dual of the MaxCut SDP has a
feasible solution with a rank-one slack only if the weight function on
the edges comes from a very restricted (though rather interesting)
class of weight functions.  Let \(G = (V,E)\) be a graph and let \(w
\colon E \to \Reals\).  The dual of the MaxCut
SDP~\eqref{eq:maxcut-sdp} is
\begin{equation}
  \label{eq:27}
  \begin{array}[!h]{rll}
    \text{Minimize}  \quad & \iprodt{\ones}{y}                           & \\[0pt]
    \text{subject to}\quad & S = \Diag(y) - \tfrac{1}{4}\Laplacian{G}(w) & \\[2pt]
                           & S \in \Psd{V},\, y \in \Reals^V.            &
  \end{array}
\end{equation}

\begin{proposition}
  \label{prop:maxcut-rankone}
  Let \(G = (V,E)\) be a graph without isolated vertices.  Let \(w
  \colon E \to \Reals \setminus \set{0}\).  If~\eqref{eq:27} has a
  feasible solution \((S,y)\) such that \(\rank(S) \leq 1\), then \(G
  = K_V\), and there exists \(u \colon V \to \Reals \setminus
  \set{0}\) such that \(w_{ij} = u_i u_j\) for each \(ij \in E\).
\end{proposition}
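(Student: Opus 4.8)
The plan is to exploit the tension between the rigidity of rank-one positive semidefinite matrices and the sparsity pattern of the Laplacian. Since \(S \in \Psd{V}\) has \(\rank(S) \le 1\), I would write \(S = \oprodsym{v}\) for some \(v \in \Reals^V\) (allowing \(v = 0\)). The only place the hypotheses bite is the off-diagonal part of the defining equation \(S = \Diag(y) - \tfrac14 \Laplacian{G}(w)\): for distinct \(i,j \in V\) we have \((\Diag(y))_{ij} = 0\), and since in \(\Laplacian{G}(w) = \sum_{kl \in E} w_{kl}\oprodsym{(e_k-e_l)}\) only an edge joining \(i\) and \(j\) can contribute to the \((i,j)\) entry, \((\Laplacian{G}(w))_{ij} = -w_{ij}\) if \(ij \in E\) and \(0\) otherwise. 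Hence
\[
  v_i v_j \;=\; S_{ij} \;=\; \tfrac14\, w_{ij}\,[ij \in E]
  \qquad \text{for all distinct } i,j \in V.
\]

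Next I would use the absence of isolated vertices together with \(w\) being nowhere zero. Every vertex \(i \in V\) lies on some edge \(ij \in E\); for that edge the identity above gives \(v_i v_j = \tfrac14 w_{ij} \ne 0\), so \(v_i \ne 0\). Thus \(v\) has full support. Now suppose, for contradiction, that some distinct \(i,j \in V\) satisfy \(ij \notin E\); then the same identity forces \(v_i v_j = 0\), contradicting \(v_i, v_j \ne 0\). Therefore \(G\) has no non-edges, i.e., \(G = K_V\).

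Finally, with \(G = K_V\) the identity reads \(w_{ij} = 4 v_i v_j = (2v_i)(2v_j)\) for every pair of distinct vertices, so \(u \coloneqq 2v\) is nowhere zero and satisfies \(w_{ij} = u_i u_j\) for each \(ij \in E\), as required. I do not expect a genuine obstacle here: the argument is a short, direct computation. The only point needing a moment's care is the degenerate case \(S = 0\) (equivalently \(v = 0\)), which is harmless because the full-support conclusion already excludes it whenever \(V\) — hence \(E\) — is nonempty, and the statement is vacuous when \(V = \emptyset\). One should of course double-check the entrywise formula for \(\Laplacian{G}(w)\) and the factor of \(\tfrac14\), but nothing essential hinges on these bookkeeping details.
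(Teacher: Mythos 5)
Your proof is correct and follows essentially the same route as the paper's: write \(S\) as a rank-one outer product and read the conclusion off the entries of the defining equation \(S = \Diag(y) - \tfrac{1}{4}\Laplacian{G}(w)\). The only immaterial differences are that the paper deduces full support of the factor from the diagonal entries together with the fact that a vanishing diagonal entry of a positive semidefinite matrix kills the corresponding row, whereas you read it directly off the off-diagonal entries, and that you carry out explicitly the rescaling by \(2\) that the paper leaves implicit.
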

\begin{proof}
  Set \(L \coloneqq \Laplacian{G}(w)\).  Suppose there exists \(u \in
  \Reals^V\) such that \(S = \oprodsym{u}\).  Then, for each \(i \in
  V\), we have \(y_i - \tfrac{1}{4}L_{ii} = S_{ii} = u_i^2 \geq 0\);
  equality implies that \(S e_i = 0\).  Since \(G\) has no isolated
  vertices, it follows that \(\supp(u) = V\).  Now the off-diagonal
  entries of the equality constraint of~\eqref{eq:27} show that \(G =
  K_V\) and that \(w_{ij} = 4 u_i u_j\) for each \(ij \in E =
  \tbinom{V}{2}\).
\end{proof}

Instances of MaxCut of the form described by
\Cref{prop:maxcut-rankone} are still NP-hard.  Indeed, it is easy to
see that they may be reformulated as
\(\max\setst{(\iprodt{\incidvector{U}}{u})(\iprodt{\incidvector{V
      \setminus U}}{u})}{\emptyset \neq U \subsetneq V}\).  The latter
problem is easily seen to include the partition problem.

\subsection{Dual Integrality for the MaxCut SDP}
\label{sec:maxcut-dual-int}

As described in \Cref{sec:drawbacks-rank-one}, our theory does not
apply directly to the embedding used in the MaxCut
SDP~\eqref{eq:maxcut-sdp}.  To formulate~\eqref{eq:maxcut-sdp} in our
format, first rewrite it as
\begin{equation}
  \label{eq:maxcut-sdp-aux}
  \begin{array}[!h]{rll}
    \text{Maximize}   & \iprod{0 \oplus\tfrac{1}{4}\Laplacian{G}(w)}{\Yhat} &                          \\[4pt]
    \text{subject to} & \iprod{\oprodsym{e_i}}{\Yhat} = 1                   & \forall i \in \zlift{V}, \\[4pt]
                      & \Yhat \in \Psdhat{\Vspc},                           &                          \\
  \end{array}
\end{equation}
and then perform the change of variable \(\Yhat \mapsto
\Bhat\Yhat\Bhat^{\transp} = \Xhat\), where
\begin{equation*}
  \Bhat \coloneqq
  \frac{1}{2}
  \begin{bmatrix}
    2     & 0^{\transp} \\
    \ones & I           \\
  \end{bmatrix}
\end{equation*}
to get the equivalent SDP
\begin{equation}
  \label{eq:maxcut-sdp-aux2}
  \begin{array}[!h]{rll}
    \text{Maximize}   & \iprod{0 \oplus \Laplacian{G}(w)}{\Xhat}                &                  \\[4pt]
    \text{subject to} & \iprod{\oprodsym{e_0}}{\Xhat} = 1,                      &                  \\[4pt]
                      & \iprod{2\Symmetrize(\oprod{e_i}{(e_i-e_0)})}{\Xhat} = 0 & \forall i \in V, \\[4pt]
                      & \Xhat \in \Psdhat{\Vspc}.                               &                  \\
  \end{array}
\end{equation}
Finally, add the redundant constraints \(\diag(\Xhat[V]) \geq 0\) to
get the \emph{homogeneous MaxCut SDP}:
\begin{equation}
  \label{eq:maxcut-sdp-homog}
  \begin{array}[!h]{rll}
    \text{Maximize}        & \iprod{0 \oplus \Laplacian{G}(w)}{\Xhat}      & \\[4pt]
    \text{subject to}\quad & \text{\(\Xhat\) satisfies~\eqref{eq:SDP-01}}, & \\[4pt]
                           & \Xhat \in \Psdhat{\Vspc}.                     & \\
  \end{array}
\end{equation}
Note that the change of variable is a linear automorphism
of~\(\Symhat{\Vspc}\) that preserves rank, so we are not giving ourselves
any undue advantage by choosing this embedding.

The dual SDP of~\eqref{eq:maxcut-sdp-homog} is
\begin{equation}
  \label{eq:maxcut-sdp-dual}
  \begin{array}[!h]{rll}
    \text{Minimize}   & \eta
                      & \\[4pt]
    \text{subject to} & \begin{bmatrix}
                          \eta & -u^{\transp} \\
                          -u   & \Diag(2u-z)
                        \end{bmatrix}
                        - \Shat
                        =
                        \begin{bmatrix}
                          0 & 0^{\transp} \\
                          0 & \Laplacian{G}(w)
                        \end{bmatrix},
                      & \\[10pt]
                      & \Shat \in \Psdhat{\Vspc},
                        \eta \in \Reals,\,
                        u \in \Reals^V,\,
                        z \in \Reals_+^V.
  \end{array}
\end{equation}
Upon adding the integrality constraint to~\eqref{eq:maxcut-sdp-dual}
(and assuming integrality of \(w \in \Integers_+^E\)),
we obtain
\begin{equation}
  \label{eq:maxcut-sdp-integer-dual-pre}
  \begin{array}[!h]{rll}
    \text{Minimize}  & \iprodt{\ones}{m}
                     & \\[4pt]
    \text{subject to} & m \colon \Powerset{V} \to \Integers_+,
                     & \\[4pt]
                     & u = \sum_{A \subseteq V} m_A \incidvector{A},
                     & \\[4pt]
                     & \iprodt{\incidvector{\binom{V}{i\in}}}{m}
                     \leq
                       2u_i - \iprodt{\incidvector{\delta(i)}}{w}
                     & \forall i \in V, \\[8pt]
                     & \iprodt{\incidvector{\binom{V}{ij\subseteq}}}{m}
                       =
                       [ij \in E] w_{ij}
                     & \forall ij \in \binom{V}{2},
  \end{array}
\end{equation}
which may be finally simplified to
\begin{subequations}
  \label{eq:maxcut-integer-dual}
  \begin{alignat}{3}
    \label{eq:maxcut-integer-dual-obj}
    \text{Minimize} \quad & \iprodt{\ones}{m}
    & \\[0pt]
    \label{eq:maxcut-integer-dual-domain}
    \text{subject to}\quad & m \colon \Powerset{V} \setminus \set{\emptyset} \to \Integers_+,
    & \\[0pt]
    \label{eq:maxcut-integer-dual-clique}
    & \supp(m) \subseteq \Kcal(G), & \\[0pt]
    \label{eq:maxcut-integer-dual-vertex}
    & \iprodt{\incidvector{\delta(i)}}{w}
      \leq
      \iprodt{\incidvector{\binom{V}{i\in}}}{m}
    & \qquad & \forall i \in V, \\[0pt]
    \label{eq:maxcut-integer-dual-edge}
    & \iprodt{\incidvector{\binom{V}{ij\subseteq}}}{m} = w_{ij}
    & \qquad & \forall ij \in E.
  \end{alignat}
\end{subequations}

The next result yields a closed formula for the unique optimal solution
of~\eqref{eq:maxcut-integer-dual}:
\begin{theorem}
  \label{thm:maxcut-integer-dual}
  Let \(G = (V,E)\) be a graph and let \(w \colon E \to \Integers_+\).
  Then the optimization problem
  \begin{subequations}
    \label{eq:maxcut-integer-dual-leq}
    \begin{alignat}{3}
      \label{eq:maxcut-integer-dual-leq-obj}
      \text{Minimize} \quad & \iprodt{\ones}{m}
      & \\[0pt]
      \label{eq:maxcut-integer-dual-leq-domain}
      \text{subject to}\quad & m \colon \Powerset{V} \setminus \set{\emptyset} \to \Integers_+,
      & \\[0pt]
      \label{eq:maxcut-integer-dual-leq-clique}
      & \supp(m) \subseteq \Kcal(G), & \\[0pt]
      \label{eq:maxcut-integer-dual-leq-vertex}
      & \iprodt{\incidvector{\delta(i)}}{w}
        \leq
        \iprodt{\incidvector{\binom{V}{i\in}}}{m}
      & \qquad & \forall i \in V, \\[0pt]
      \label{eq:maxcut-integer-dual-leq-edge}
      & \iprodt{\incidvector{\binom{V}{ij\subseteq}}}{m} \leq w_{ij}
      & \qquad & \forall ij \in E.
    \end{alignat}
  \end{subequations}
  has a unique optimal solution~\(m^*\), and it satisfies
  \(\supp(m^*) \subseteq E\) and
  \(m^*\mathord{\restriction}_E = w\).
\end{theorem}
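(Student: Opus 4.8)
The plan is to certify optimality of \(m^*\) by a single aggregation of the vertex and edge constraints, and then read uniqueness off the equality conditions of that aggregation. Concretely, I first check that \(m^*\) (extend \(w\) by \(0\) off \(E\)) is feasible in~\eqref{eq:maxcut-integer-dual-leq}: each \(ij \in E\) is a \(2\)-clique, so \(\supp(m^*) \subseteq \Kcal(G)\); for every \(i \in V\) we have \(\sum_{A \ni i} m^*_A = \sum_{j \in N(i)} w_{ij} = \iprodt{\incidvector{\delta(i)}}{w}\), so~\eqref{eq:maxcut-integer-dual-leq-vertex} holds with equality; and for every \(ij \in E\) the only set in \(\supp(m^*)\) containing \(\{i,j\}\) is \(\{i,j\}\) itself, so \(\sum_{A \supseteq \{i,j\}} m^*_A = w_{ij}\), which is equality in~\eqref{eq:maxcut-integer-dual-leq-edge}. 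Thus \(m^*\) is feasible with \(\iprodt{\ones}{m^*} = \sum_{ij \in E} w_{ij}\).

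Next comes the key step, a lower bound on the objective. Let \(m\) be any feasible solution. Because \(\supp(m) \subseteq \Kcal(G)\), every \(A\) in the support satisfies \(\card{\{ij \in E : \{i,j\} \subseteq A\}} = \binom{\card{A}}{2}\) and \(\card{N(i) \cap A} = \card{A} - 1\) for each \(i \in A\). Summing~\eqref{eq:maxcut-integer-dual-leq-vertex} over all \(i \in V\) gives \(\sum_A m_A \card{A} \ge 2 \sum_{ij \in E} w_{ij}\), and summing~\eqref{eq:maxcut-integer-dual-leq-edge} over all \(ij \in E\) gives \(\sum_A m_A \binom{\card{A}}{2} \le \sum_{ij \in E} w_{ij}\). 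Subtracting the second from the first, \(\sum_A m_A\bigl(\card{A} - \binom{\card{A}}{2}\bigr) \ge \sum_{ij \in E} w_{ij}\). Since \(\card{A} - \binom{\card{A}}{2} \le 1\) for every integer \(\card{A} \ge 1\) and all \(m_A \ge 0\), this yields \(\iprodt{\ones}{m} = \sum_A m_A \ge \sum_{ij \in E} w_{ij}\), so \(m^*\) is optimal.

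For uniqueness I trace the equality conditions. If \(m\) is optimal, the whole chain above holds with equality. Equality in the last step forces \(m_A\bigl(1 - \card{A} + \binom{\card{A}}{2}\bigr) = 0\) for each \(A\); as \(1 - \card{A} + \binom{\card{A}}{2} = \binom{\card{A}-1}{2} \ge 1\) once \(\card{A} \ge 3\), we get \(\supp(m) \subseteq \tbinom{V}{1} \cup \tbinom{V}{2}\), hence \(\supp(m) \subseteq \tbinom{V}{1} \cup E\) by \(\supp(m) \subseteq \Kcal(G)\). Equality in the summed edge inequality forces every~\eqref{eq:maxcut-integer-dual-leq-edge} tight, which — \(m\) being supported on singletons and edges — reads \(m_{\{i,j\}} = w_{ij}\) for all \(ij \in E\). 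Equality in the summed vertex inequality forces every~\eqref{eq:maxcut-integer-dual-leq-vertex} tight, i.e.\ \(m_{\{i\}} + \sum_{j \in N(i)} m_{\{i,j\}} = \iprodt{\incidvector{\delta(i)}}{w}\), which combined with the previous sentence gives \(m_{\{i\}} = 0\). Hence \(m = m^*\); in particular \(\supp(m^*) \subseteq E\) and \(m^* \mathord{\restriction}_E = w\).

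The one nontrivial move is finding the right aggregation in the key step: weighting every vertex and every edge constraint by~\(1\) and subtracting produces the coefficient \(\card{A} - \binom{\card{A}}{2}\), whose maximum over integers \(\card{A} \ge 1\) equals \(1\) and is attained exactly at \(\card{A} \in \{1,2\}\). This is precisely an optimal dual solution \((\ones,\ones)\) of the linear programming relaxation, and the quantity \(1 - \card{A} + \binom{\card{A}}{2} = \binom{\card{A}-1}{2}\) records which dual inequalities are slack, which is what powers the uniqueness argument via complementary slackness. Everything else — the clique double-counting and the term-by-term equality bookkeeping — is routine.
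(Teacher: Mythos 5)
Your proof is correct, and it reaches the conclusion by a genuinely different and noticeably shorter route than the paper's. The paper spends most of its effort establishing, \emph{before} any counting, that an optimal solution is supported on singletons and edges: for each \(C\) in the support it performs a local exchange, replacing one copy of \(C\) by \(\card{C}-2\) copies of each singleton of \(C\) plus one copy of each edge induced by \(C\); verifying feasibility of the exchanged solution requires a delicate nonnegativity claim, namely \(m^* - e_C \geq (\card{C}-2)\incidvector{\binom{C}{1}}\), proved by chaining the vertex, edge, and clique constraints, and optimality then forces \(\card{C}\in\set{1,2}\) because the exchange lowers the objective by \(\tfrac12(\card{C}-1)(\card{C}-2)\). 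Only afterwards does the paper sum the vertex and edge constraints to finish. You run that same aggregation up front on an \emph{arbitrary} feasible \(m\), using the clique condition to evaluate the summed edge constraints as \(\sum_A m_A\binom{\card{A}}{2}\), and then invoke the elementary bound \(\card{A}-\binom{\card{A}}{2}\leq 1\) (with equality exactly when \(\card{A}\in\set{1,2}\)) to obtain \(\iprodt{\ones}{m}\geq\iprodt{\ones}{w}\) directly; the support restriction then falls out of the equality conditions rather than being a prerequisite, and the concluding bookkeeping (tightness of every vertex and edge constraint forcing \(m_{\set{i,j}}=w_{ij}\) and \(m_{\set{i}}=0\)) is common to both arguments. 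Your single global certificate buys a much cleaner proof by bypassing the exchange step and its feasibility verification entirely, which is where most of the paper's work lies; what the paper's local argument offers in exchange is an explicit improving move for any solution that uses a large clique, but for the statement as given nothing is lost by your approach.
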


\begin{proof}
  Let \(m_w \colon \Powerset{V} \to \Integers_+\) be the zero
  extension of~\(w\), that~is, \(\supp(m_w) \subseteq E\) and
  \(m_w\mathord{\restriction}_E = w\).  It is easy to check that
  \(m_w\) is feasible in~\eqref{eq:maxcut-integer-dual-leq}.  Let
  \(m^* \colon \Powerset{V} \to \Integers_+\) be an optimal solution
  for~\eqref{eq:maxcut-integer-dual-leq}; one exists since there exist
  feasible solutions and the objective value of every feasible
  solution is a nonnegative integer.  We will prove that
  \begin{equation}
    \label{eq:17}
    m^* = m_w.
  \end{equation}

  The key part of the proof is to show that
  \begin{equation}
    \label{eq:18}
    \supp(m^*) \subseteq \tbinom{V}{1} \cup \tbinom{V}{2}.
  \end{equation}
  Let \(C \in \supp(m^*)\).  We claim that
  \begin{equation}
    \label{eq:19}
    \text{\(\tilde{m} \coloneqq
      m^* - e_C - \big(\card{C}-2\big)\incidvector{\binom{C}{1}} +
      \incidvector{E[C]}\) is feasible for~\eqref{eq:maxcut-integer-dual-leq}}.
  \end{equation}
  For every \(i \in V\), we have
  \begin{equation*}
    \incidvector{\binom{V}{i\in}}^{\transp}
    \Big({
      e_C + \big(\card{C}-2\big)\incidvector{\binom{C}{1}}
    }\Big)
    =
    [i \in C] + \big(\card{C}-2\big)[i \in C]
    =
    [i \in C]\big(\card{C}-1\big)
    =
    \incidvector{\binom{V}{i\in}}^{\transp}
    \incidvector{E[C]},
  \end{equation*}
  so \eqref{eq:maxcut-integer-dual-leq-vertex} holds
  for~\(\tilde{m}\).  For every \(ij \in E\) we have
  \begin{equation*}
    \incidvector{\binom{V}{ij\subseteq}}^{\transp}
    \Big({
      e_C + \big(\card{C}-2\big)\incidvector{\binom{C}{1}}
    }\Big)
    =
    \big[ij \in E[C]\big]
    =
    \incidvector{\binom{V}{ij\subseteq}}^{\transp}
    \incidvector{E[C]},
  \end{equation*}
  so \eqref{eq:maxcut-integer-dual-leq-edge} holds for~\(\tilde{m}\).
  If \(C\) is a singleton, then \(\tilde{m} = m^*\) and
  \eqref{eq:maxcut-integer-dual-leq-domain} holds.  So,
  in~verifying~\eqref{eq:maxcut-integer-dual-leq-domain} for
  \(\tilde{m}\), we may assume \(\card{C}\geq 2\).  We will prove
  that~\eqref{eq:maxcut-integer-dual-leq-domain} holds
  for~\(\tilde{m}\) by showing that
  \begin{equation}
    \label{eq:20}
    \mbar \coloneqq m^* - e_C \geq \big(\card{C}-2\big) \incidvector{\binom{C}{1}};
  \end{equation}
  then \eqref{eq:maxcut-integer-dual-leq-clique} for~\(\tilde{m}\)
  will also follow, thus completing the proof of~\eqref{eq:19}.

  Note that \(\mbar \geq 0\).  Let \(i \in V\).  Then
  \begin{alignat*}{2}
    \incidvector{\delta(i)}^{\transp} w
    & \leq
    \incidvector{\binom{V}{i\in}}^{\transp} m^*
    & \quad & \text{by~\eqref{eq:maxcut-integer-dual-leq-vertex}}
    \\
    & =
    \incidvector{\binom{V}{i\in}}^{\transp} \mbar
    + [i \in C]
    & \quad & \text{since \(m^* = \mbar + e_C\)}
    \\
    & \leq
    \mbar_{\set{i}}
    + \sum_{\mathclap{j \in V \setminus \set{i}}} \incidvector{\binom{V}{ij\subseteq}}^{\transp} \mbar
    + [i \in C]
    & \quad & \text{since \(\incidvector{\binom{V}{i\in}}
      \leq e_{\set{i}} + \sum_{\mathclap{j \in V \setminus \set{i}}}\incidvector{\binom{V}{ij \subseteq}}\)}
    \\
    & =
    \mbar_{\set{i}}
    + \sum_{\mathclap{j \in N(i)}} \incidvector{\binom{V}{ij\subseteq}}^{\transp} \mbar
    + [i \in C]
    & \quad & \text{by~\eqref{eq:maxcut-integer-dual-leq-clique}}
    \\
    & =
    \mbar_{\set{i}}
    + \sum_{\mathclap{j \in N(i)}} \incidvector{\binom{V}{ij\subseteq}}^{\transp} m^*
    - \sum_{\mathclap{j \in N(i)}} \incidvector{\binom{V}{ij\subseteq}}^{\transp} e_C
    + [i \in C]
    & \quad & \text{since \(\mbar = m^* - e_C\)}
    \\
    & \leq
    \mbar_{\set{i}}
    + \sum_{j \in N(i)} w_{ij}
    - \card{\delta(i) \cap E[C]}
    + [i \in C]
    & \quad & \text{by~\eqref{eq:maxcut-integer-dual-leq-edge}}
    \\
    & =
    \mbar_{\set{i}}
    + \iprodt{\incidvector{\delta(i)}}{w} - [i\in C]\big(\card{C}-2\big)
    & \quad & \text{since \(\card{\delta(i) \cap E[C]} = [i\in C]\big(\card{C}-1\big)\)}.
  \end{alignat*}
  This proves~\eqref{eq:20}, and thus completes the proof
  of~\eqref{eq:19}.

  We have
  \begin{equation*}
    \iprodt{\ones}{m^*}
    -
    \iprodt{\ones}{\tilde{m}}
    =
    \iprodt{\ones}{
      \Big({
        e_C + \big(\card{C}-2\big)\incidvector{\binom{C}{1}}
      }\Big)
    }
    - \iprodt{\ones}{\incidvector{E[C]}}
    =
    1 + \card{C}\big(\card{C}-2\big) - \binom{\card{C}}{2}
    =
    \frac{1}{2}\big(\card{C}-1\big)\big(\card{C}-2\big).
  \end{equation*}
  Optimality of~\(m^*\) and \eqref{eq:19} imply that \(\card{C} \in
  \set{1,2}\).  This concludes the proof of~\eqref{eq:18}.

  By summing the vertex
  constraints~\eqref{eq:maxcut-integer-dual-leq-vertex} and
  using~\eqref{eq:18}, we obtain
  \begin{equation}
    \label{eq:21}
    2 \ones^{\transp} w \leq \bigg(\sum_{A \subseteq V} \card{A}e_A\bigg)^{\transp} m^*
    = \incidvector{\binom{V}{1}}^{\transp} m^* + 2 \incidvector{\binom{V}{2}}^{\transp} m^*.
  \end{equation}
  By summing the edge
  constraints~\eqref{eq:maxcut-integer-dual-leq-edge} and
  using~\eqref{eq:18}, we obtain
  \begin{equation}
    \label{eq:22}
    \incidvector{\binom{V}{2}}^{\transp} m^*
    =
    \bigg(\sum_{A \subseteq V} \binom{\card{A}}{2} e_A\bigg)^{\transp} m^*
    \leq
    \ones^{\transp} w.
  \end{equation}
  It follows from~\eqref{eq:18}, \eqref{eq:21}, and~\eqref{eq:22} that
  \begin{equation}
    \label{eq:23}
    \iprodt{\ones}{m_w}
    =
    \iprodt{\ones}{w}
    \leq
    \iprodt{\incidvector{\binom{V}{1}}}{m^*} + \iprodt{\incidvector{\binom{V}{2}}}{m^*}
    =
    \iprodt{\ones}{m^*}.
  \end{equation}
  Equality throughout in~\eqref{eq:23} implies that each constraint
  in~\eqref{eq:maxcut-integer-dual-leq-vertex}
  and~\eqref{eq:maxcut-integer-dual-leq-edge} holds with equality
  for~\(m^*\), so that \(m^*\) is feasible
  for~\eqref{eq:maxcut-integer-dual}.  The latter fact, together
  with~\eqref{eq:18}, easily implies that \(m^* = m_w\).
\end{proof}

Note that \Cref{thm:maxcut-integer-dual} does not characterize total
dual integrality of the MaxCut SDP~\eqref{eq:maxcut-sdp} since it only
identifies integral dual optimal solutions when the weight function
\(w\) on the edges is nonnegative.  We postpone the discussion of dual
integrality for not necessarily nonnegative weight functions
to~\Cref{sec:maxcut-nonneg}.

\section{Conclusion and Future Directions}
\label{sec:conclusion}

We have introduced a primal-dual symmetric notion integrality in SDPs
in~\Cref{def:primal-int,def:dual-int}; see also
conditions~\eqref{eq:primal-int} and~\eqref{eq:dual-int}.  This
enabled the statement in \Cref{thm:1} of the SDP version of the
LP-based \Cref{thm:LP-chain}.  Then, by relying on our generalization
of~\Cref{cor:Hoffman} in~\Cref{cor:3.1}, and the notion of \emph{total
  dual integrality} through a linear map in~\Cref{def:TDI}, we
described sufficient conditions for exactness of the (primal) SDP
formulation in \Cref{thm:tdi-projection} and equality throughout the
chain from \Cref{thm:1} in \Cref{thm:eq-throughout}.  We also
characterized the semidefinite notions of TDIness in the LP case
(\Cref{thm:SDP-TDI-LP}) and all variants of the theta function
(\Cref{thm:theta-TDI}) via natural conditions.  Finally, in
\Cref{thm:maxcut-integer-dual}, we completely determined the optimal
solutions for the integer dual SDP for the MaxCut SDP when the weight
function on the edges of the graph is nonnegative.

Our approach leads to several other interesting research directions.
We start with:
\begin{problem}
  Obtain a primal-dual symmetric integrality condition for SDPs that
  applies to arbitrary ILPs, not just binary ones.
\end{problem}

The theory of total dual integrality for LPs is considered well
understood.  Our work raises new issues, related to the interplay
between total dual integrality and extended formulations in LP; the
latter area has received a lot of attention recently.  More
concretely, one may define a system of linear inequalities \(Ax \leq
b\) on~\(\Reals^n\) to be TDI through a linear map \(L \colon \Reals^k
\to \Reals^n\) if, for every integral \(c \in \Integers^k\), the LP
dual to \(\sup\setst{\iprod{L(c)}{x}}{Ax \leq b}\) has an integral
optimal solution if its optimal value is finite.
\begin{problem}
  Are there compact formulations for classical combinatorial
  optimization problems (e.g., maximum weight \(r\)-arborescences,
  minimum spanning trees) that are TDI through the corresponding
  lifting maps?  Do these lead to new min-max theorems?
\end{problem}
\begin{problem}
  Let \(Ax \leq b\) be a system of linear inequalities on~\(\Reals^n\)
  and \(L \colon \Reals^k \to \Reals^n\) a linear map such that for
  \(P \coloneqq \setst{x \in \Reals^n}{Ax \leq b}\) the projection
  \(L^*(P)\) is integral.  Does there exist a TDI system \(Cx \leq d\)
  in~\(\Reals^n\) with \(d\) integral such that \(L^*(P) =
  L^*(\setst{x \in \Reals^n}{Cx \leq d})\)?
\end{problem}

The next problem is somewhat more open ended:
\begin{problem}
  What is the relation between total dual integrality and the integer
  decomposition property (see~\cite[sec~22.10]{Schrijver86a}), of
  which our dual integrality condition in~\Cref{def:dual-int} is
  reminiscent?
\end{problem}

In \Cref{sec:maxcut} we studied dual integrality of MaxCut SDP with
nonnegative weight functions, and we discuss
in~\Cref{sec:maxcut-nonneg} the issues that arise when we allow
weights of arbitrary signs.  These issues suggest further research
directions.  One may define a refinement of the notion of total dual
integrality restricted to a rational convex cone \(\mathbb{K}
\subseteq \Reals^k\); there, one would only require the dual SDP to
have an integral solution for primal objective functions of the form
\(\Xhat \mapsto \iprod{\Lcal(c)}{\Xhat}\) with \(c \in \mathbb{K}\).
In this context, it seems misleading to use the term \emph{total} dual
integrality; perhaps \(\mathbb{K}\)-dual integrality would seem more
adequate.
\begin{problem}
  \label{prob:KDI}
  Adapt \Cref{thm:tdi-projection} to the notion of \(\mathbb{K}\)-dual
  integrality; how should the set~\(\Cscr\) be modified
  using~\(\mathbb{K}\) to yield an integral convex set?
\end{problem}

Concerning the semidefinite notion of TDIness, one may ask for a
characterization of total dual integrality of other SDP formulations,
such as the application of lift-and-project hierarchies
(see~\cite{Laurent03a}) to ILP formulations of combinatorial
optimization problems.  One possible instance is the following:
\begin{problem}
  Given \(k \geq 1\) and the \(LS_+\) operator of Lovász and
  Schrijver~\cite{LovaszS91a} (called~\(N_+\) in their paper),
  determine the class of graphs for which the \(k\)th iterate of the
  \(LS_+\) operator applied to the system
  \begin{equation}
    \label{eq:16}
    x \geq 0,\qquad x_i+x_j \leq 1 \quad\forall ij \in E
  \end{equation}
  yields a TDI system through the appropriate lifting, leading to a
  minmax relation involving stable sets in such graphs.
\end{problem}

Still in the realm of SDPs, one may ask for notions of
\emph{exactness} other than integrality, as well as their dual
counterparts.  For~instance, many problems in continuous mathematics,
such as control theory, lead to nonconvex optimization problems where
the variable matrix is required to be rank-one or of restricted rank.
However, the entries of such a matrix may define a continuous curve
rather than taking on only finitely many values.  For a general convex
relaxation framework working with such formulations,
see~\cite{KojimaT00a}.
\begin{problem}
  Obtain systematic, primal-dual symmetric conditions for exactness in
  SDP relaxations for continuous problems.
\end{problem}

Finally, one may consider the problem of defining integrality in a
systematic and primal-dual symmetric way for convex optimization
problems in other forms.  This is especially challenging since a dual
integrality notion, even~in the polyhedral case, is inherently
dependent on the algebraic representation of the problem, not only on
its geometry.

\appendix

\section{Rank Constraint in Dual SDP of Trace Formulation for Theta}
\label{sec:rank-theta-tr}

In \Cref{sec:drawbacks-rank-one} we showed that the rank-one
constraint for the dual SDP of a formulation of the theta function is
not very interesting.  There, the formulation we used was based on our
chosen embedding into the lifted space \(\Symhat{\Vspc}\), which
requires the constraints~\eqref{eq:SDP-01}.  One might argue that the
rank-one constraint might make more sense for the dual SDP of the
following, probably more popular, formulation of \(\theta(G,w)\) for a
graph \(G = (V,E)\) and \(w \colon V \to \Reals_+\):
\begin{subequations}
  \label{eq:theta3}
  \begin{alignat}{3}
    \text{Maximize} \quad & \iprod{\oprodsym{\sqrt{w}}}{X} & \\[2pt]
    \text{subject to}\quad & \iprod{I}{X} = 1,
    & \\[2pt]
    \label{eq:theta3-edge}
    & \iprod{\Symmetrize(\oprod{e_i}{e_j})}{X} = 0
    & \quad & \forall ij \in E, \\[2pt]
    & X \in \Psd{V}. &
  \end{alignat}
\end{subequations}
We will show that the rank-one constraint is not very meaningful even
in the dual of the following SDP formulation of~\(\theta'(G,w)\):
\begin{equation}
  \label{eq:theta3'}
  \begin{array}[!h]{rll}
    \text{Maximize} \quad & \iprod{\oprodsym{\sqrt{w}}}{X} & \\[2pt]
    \text{subject to}\quad & \iprod{I}{X} = 1,
    & \\[2pt]
    & \iprod{\Symmetrize(\oprod{e_i}{e_j})}{X} = 0
    & \forall ij \in E, \\[2pt]
    & \iprod{\Symmetrize(\oprod{e_i}{e_j})}{X} \geq 0
    & \forall ij \in \overline{E}, \\[2pt]
    & X \in \Psd{V}, &
  \end{array}
\end{equation}
where \(\overline{E} \coloneqq \tbinom{V}{2} \setminus E\) is the edge
set of~\(\overline{G}\).  Note that the dual SDP of~\eqref{eq:theta3'} is
\begin{equation*}
  \begin{array}[!h]{rll}
    \text{Minimize} \quad & \lambda & \\[0pt]
    \text{subject to}\quad & \lambda I + {\displaystyle \sum_{ij \in \tbinom{V}{2}}} y_{ij}
    \Symmetrize(\oprod{e_i}{e_j}) \succeq \oprodsym{\sqrt{w}}, & \\
    & y\mathord{\restriction}_{\overline{E}} \leq 0, & \\
  \end{array}
\end{equation*}
or, equivalently,
\begin{equation}
  \label{eq:theta2'}
  \begin{array}[!h]{rll}
    \text{Minimize} \quad & \lambda & \\[2pt]
    \text{subject to}\quad & \lambda I + A - \overline{A} - S = \oprodsym{\sqrt{w}}, & \\
    & S \succeq 0, & \\
    & A \in \Ascr_G, & \\
    & \overline{A} \in \Ascr_{\overline{G}} \cap \Symnonneg{V}. & \\
  \end{array}
\end{equation}
Note that the dual of the formulation~\eqref{eq:theta3} for
\(\theta(G,w)\) is obtained from~\eqref{eq:theta2'} by dropping the
variable matrix~\(\overline{A}\), i.e., by~setting \(\overline{A} =
0\).  Hence, every feasible solution for the dual SDP
of~\eqref{eq:theta3} is feasible in~\eqref{eq:theta2'}.

One could formulate \(\theta^+(G,w)\) similarly as~\eqref{eq:theta3},
by replacing the equality in the edge
constraints~\eqref{eq:theta3-edge} with~`\(\leq\)'.  The corresponding
dual SDP is obtained from~\eqref{eq:theta2'} by setting \(\overline{A}
= 0\) and requiring \(A \in \Ascr_G \cap \Symnonneg{V}\).  Again, the
feasible region of this dual SDP is a subset of the feasible region
of~\eqref{eq:theta2'}.

The embedding of stable sets in~\(G\) as feasible solutions
of~\eqref{eq:theta3'} goes as follows: if \(U \subseteq V\) is a
stable set in~\(G\) with positive weight
\(\iprodt{w}{\incidvector{U}}\), then \(X \coloneqq
\paren{\iprodt{w}{\incidvector{U}}}^{-1} \oprodsym{(\sqrt{w} \hprod
  \incidvector{U})}\) is feasible in~\eqref{eq:theta3'}, with
objective value \(\iprodt{w}{\incidvector{U}}\).  The normalization
factor and the square root in the definition of~\(X\) already hint
that this formulation does not play so well with integrality.

\begin{proposition}
  Let \(G = (V,E)\) be a graph and let \(w \in \Reals_{++}^V\).  If
  there exists a feasible solution \((\lambda,A,\overline{A},S)\)
  for~\eqref{eq:theta2'} such that \(\rank(S) \leq 1\), then
  \(\overline{G}\) is bipartite.
\end{proposition}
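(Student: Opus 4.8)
The plan is to exploit positive semidefiniteness together with the rank bound to factor $S = \oprodsym{s}$ for some $s \in \Reals^V$, and then to read off the sign pattern of~$s$ directly from the $\overline{E}$-entries of the defining equation in~\eqref{eq:theta2'}; the positive part and the negative part of~$s$ will turn out to be a bipartition of~$\overline{G}$.

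First I would observe that, since $S \in \Psd{V}$ and $\rank(S) \le 1$, there exists $s \in \Reals^V$ with $S = \oprodsym{s}$ (with $s = 0$ when $S = 0$). Next, fix $ij \in \overline{E}$ and take the $ij$-entry of the equality constraint $\lambda I + A - \overline{A} - S = \oprodsym{\sqrt{w}}$. Here $A_{ij} = 0$ because $A \in \Ascr_G$ and $ij \notin E$; moreover $\overline{A}_{ij} \ge 0$ because $\overline{A} \in \Ascr_{\overline{G}} \cap \Symnonneg{V}$; and $(\oprodsym{\sqrt{w}})_{ij} = \sqrt{w_i w_j} > 0$ since $w \in \Reals_{++}^V$. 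Hence
\[
  s_i s_j \;=\; S_{ij} \;=\; -\overline{A}_{ij} - \sqrt{w_i w_j} \;<\; 0
  \qquad \text{for every } ij \in \overline{E}.
\]
In particular every vertex incident to an edge of~$\overline{G}$ lies in $\supp(s)$, and any vertex outside $\supp(s)$ is isolated in~$\overline{G}$.

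Finally I would conclude that the partition of~$V$ into $\{i : s_i > 0\}$ and $\{i : s_i \le 0\}$ (equivalently, assigning each isolated vertex of~$\overline{G}$ arbitrarily) witnesses that $\overline{G}$ is bipartite: the displayed inequality says that every edge of~$\overline{G}$ joins a vertex with $s_i > 0$ to one with $s_j < 0$, so $\overline{G}$ contains no odd cycle. If $\overline{E} = \emptyset$ there is nothing to prove.

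I do not anticipate a substantial obstacle here. The only points needing a little care are the degenerate case $S = 0$ (which by the computation above forces $\overline{E} = \emptyset$, hence $\overline{G}$ bipartite trivially) and the remark that vertices not in $\supp(s)$ are automatically isolated in~$\overline{G}$ and therefore do not obstruct the $2$-colouring. One could additionally note, as in~\Cref{prop:maxcut-rankone} and~\eqref{eq:10}, that this again exhibits the inadequacy of the rank-one constraint as a dual integrality condition, since it restricts attention to an uninterestingly small class of graphs.
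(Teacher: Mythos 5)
Your proof is correct and follows essentially the same route as the paper's: both write \(S = \oprodsym{s}\) and specialize the dual equality constraint of~\eqref{eq:theta2'} to the \(ij\)-th entry for \(ij \in \overline{E}\), using \(\overline{A}_{ij} \geq 0\) and \(w \in \Reals_{++}^V\) to force \(s_i s_j < 0\), which yields the bipartition of~\(\overline{G}\) according to the sign of~\(s\). You streamline the argument slightly by skipping the paper's intermediate diagonal computation (which writes \(s_i = \pm\sqrt{\lambda - w_i}\) relative to an explicit vertex subset \(U\)) and reading the bipartition directly off the signs of~\(s\), with careful handling of the degenerate cases.
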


\begin{proof}
  Suppose \(S = \oprodsym{s}\) for some \(s \in \Reals^V\).  Then
  \begin{equation}
    \label{eq:24}
    \lambda I + A = \oprodsym{s} + \oprodsym{\sqrt{w}} + \overline{A}.
  \end{equation}
  Apply \(\diag\) to both sides of~\eqref{eq:24} to get \(\lambda
  \ones = (s \hprod s) + w\).  Hence, \(\lambda \ones \geq w\) and
  there exists \(U \subseteq V\) such that
  \begin{equation}
    \label{eq:25}
    s = \Diag(2\incidvector{U}-\ones)\sqrt{\lambda\ones-w}.
  \end{equation}
  Let \(ij \in \overline{E}\).  Specialize~\eqref{eq:24} to the
  \(ij\)th entry to get
  \begin{equation}
    \label{eq:26}
    0 = s_i s_j + \sqrt{w_i w_j} + \overline{A}_{ij}
    \geq (-1)^{[i\not\in U]+[j\not\in U]}\sqrt{(\lambda-w_i)(\lambda-w_j)}
    +\sqrt{w_i w_j}.
  \end{equation}
  If \(i,j \in U\) or \(i,j \in \overline{U} \coloneqq V \setminus
  U\), then the RHS of~\eqref{eq:26} is positive, since \(w \in
  \Reals_{++}^V\).  This contradiction shows that \(G[U] = K_U\) and
  \(G[\overline{U}] = K_{\overline{U}}\), so \(\overline{G}\) is
  bipartite with color classes~\(U\) and~\(\overline{U}\).
\end{proof}

By our previous discussion, the dual SDPs of the above formulations
of~\(\theta\), \(\theta'\), and~\(\theta^+\) only have rank-one slacks
when \(\overline{G}\) is bipartite (whence \(G\) is perfect).

We point out, however, that another low-rank constraint for the dual
SDP for~\(\theta'\) does in fact yield a useful and almost exact
formulation for the chromatic number of a graph \(G = (V,E)\), via the
circular chromatic number.  We first describe the vector chromatic
number, introduced in~\cite{KargerMS98a}.  Suppose \(G\) has at least
one edge.  The \emph{vector chromatic number}~\(\chi_v(G)\) of~\(G\)
is the optimal value of the following optimization problem
\begin{equation}
  \label{eq:vector-chromatic}
  \begin{array}[!h]{rll} \text{Minimize} \quad & \tau & \\[2pt]
    \text{subject to}\quad & \diag(Y) = \ones, & \\[2pt]
    & \iprod{\Symmetrize(\oprod{e_i}{e_j})}{Y} \leq -\tfrac{1}{\tau-1}
    & \forall ij \in E,\\[2pt]
    & Y \in \Psd{V}, & \\[2pt]
    & \tau \geq 2. & \\
  \end{array}
\end{equation}
It is not hard to see that the map \((S,\lambda) \mapsto
\tfrac{1}{\lambda-1}S\) maps bijectively the feasible region
of~\eqref{eq:theta2'} applied to~\(\overline{G}\) to the feasible
region of~\eqref{eq:vector-chromatic} and preserves the objective
value.  Hence, \(\chi_v(G) = \theta'(\overline{G})\).  This suggests
the following alternative SDP formulation for~\(\chi_v(G)\):
\begin{equation}
  \label{eq:vector-chromatic2}
  \begin{array}[!h]{rll} \text{Minimize} \quad & \sigma & \\[2pt]
    \text{subject to}\quad & \diag(Y) = \ones, & \\[2pt]
    & \iprod{\Symmetrize(\oprod{e_i}{e_j})}{Y} \leq \sigma
    & \forall ij \in E,\\[2pt]
    & Y \in \Psd{V}, & \\[2pt]
    & \tau \geq 2. & \\
  \end{array}
\end{equation}
Any optimal solution \(\sigma^*\) lies in \(\halfopen{{-1}}{0}\) and
leads to the optimal value \(\tau^* \coloneqq 1-1/\sigma^*\)
for~\eqref{eq:vector-chromatic}.

Consider next the \emph{circular chromatic number}~\(\chi_c(G)\)
of~\(G\), which can be defined as the optimal value of the
optimization problem
\begin{equation}
  \label{eq:circular-chromatic}
  \begin{array}[!h]{rll} \text{Minimize} \quad & \tau & \\[2pt]
    \text{subject to}\quad & y \colon V \to S^1, & \\[2pt]
    & \phi_{ij} \geq \frac{2\pi}{\tau} & \forall ij \in E,\\[2pt]
    & \tau \geq 2, & \\
  \end{array}
\end{equation}
where \(S^1\) denotes the unit sphere in~\(\Reals^2\) and \(\phi_{ij}
\in [0,\pi]\) is the angle between \(y_i\) and~\(y_j\).  This
formulation can be seen in~\cite{DeVos}; see~\cite{Zhu01a} for further
properties of~\(\chi_c\).  Since \(\cos\) is monotone decreasing
on~\([0,\pi]\), we can rewrite the latter optimization problem using
Gram matrices as
\begin{equation}
  \label{eq:circular-chromatic-Gram}
  \begin{array}[!h]{rll} \text{Minimize} \quad & \tau & \\[2pt]
    \text{subject to}\quad & \diag(Y) = \ones, & \\[2pt]
    & \iprod{\Symmetrize(\oprod{e_i}{e_j})}{Y} \leq \cos\frac{2\pi}{\tau}
    & \forall ij \in E,\\[2pt]
    & Y \in \Psd{V}, & \\[2pt]
    & \rank(Y) = 2, & \\[2pt]
    & \tau \geq 2. & \\
  \end{array}
\end{equation}
Finally, since \(f \colon \tau \in \halfopen{2}{\infty} \mapsto
\cos\frac{2\pi}{\tau} \in \halfopen{{-1}}{1}\) is a monotone
increasing bijection, we see that, if \(\sigma^*\) is the optimal
value of~\eqref{eq:vector-chromatic2} with the extra constraint
\(\rank(Y) = 2\), then \(\chi_c(G) = f^{-1}(\sigma^*)\).  One can then
read off the chromatic number of~\(G\) since \(\chi(G) =
\ceiling{\chi_c(G)}\).

Note, however, that this dual formulation required quite a lot of
\emph{ad~hoc} treatment.

\section{The MaxCut Problem and Nonnegative Weights}
\label{sec:maxcut-nonneg}

One may wonder whether \Cref{thm:maxcut-integer-dual} may be extended
to arbitrary weight functions \(w \colon E \to \Integers\), not~just
nonnegative weights.  Such an extension might be used to characterize
the graphs~\(G\) for which the system defining the MaxCut
SDP~\eqref{eq:maxcut-sdp} is TDI through \(w \in \Reals^E \mapsto 0
\oplus \Laplacian{G}(w) \eqqcolon \Lcal(w)\); by
\Cref{thm:maxcut-integer-dual} such graphs forms a subset of the
bipartite graphs.  Then we would be able to obtain the \emph{cut
  polytope} \(\conv\setst{\incidvector{\delta(S)}}{\emptyset \neq S
  \subsetneq V}\) of any such graph~\(G\) as a projection of the
feasible region of~\eqref{eq:maxcut-sdp} via~\(\Lcal^*\).  However,
due to constraints~\eqref{eq:maxcut-integer-dual-leq-edge}, if \(w
\colon E \to \Integers\) has a negative entry, problem
\eqref{eq:maxcut-integer-dual-leq} is infeasible.  One~may attempt to
``fix'' this issue by adding to~\eqref{eq:maxcut-sdp-homog} the
redundant constraint \(\Lcal^*(\Xhat) = \Laplacian{G}^*(\Xhat[V]) \geq
0\).  Note that this is similar to the redundant
constraint~\eqref{eq:SDP-01-nonneg} added in our chosen embedding,
which is fundamental for dealing with \(w \in \Reals^V \setminus
\Reals_+^V\) for the~\(\theta\) function; in both cases, the redundant
constraint comes from the projection~\(\Lcal^*\).  The dual SDP is
then obtained from~\eqref{eq:maxcut-sdp-dual} by replacing the
occurrence of~\(\Laplacian{G}(w)\) in the RHS with
\(\Laplacian{G}(w+y)\), where \(y \in \Reals_+^E\) is a new variable.
Optimal solutions for the corresponding integer dual SDP are described
by the next result:
\begin{corollary}
  \label{cor:maxcut-integer-dual2}
  Let \(G = (V,E)\) be a graph and let \(w \colon E \to \Integers\).
  Then the optimization problem
  \begin{subequations}
    \label{eq:maxcut-integer-dual2-leq}
    \begin{alignat}{3}
      \label{eq:maxcut-integer-dual2-leq-obj}
      \text{Minimize} \quad & \iprodt{\ones}{m}
      & \\[0pt]
      \label{eq:maxcut-integer-dual2-leq-domain}
      \text{subject to}\quad & m \colon \Powerset{V} \setminus \set{\emptyset} \to \Integers_+,
      & \\[0pt]
      \label{eq:maxcut-integer-dual2-leq-clique}
      & \supp(m) \subseteq \Kcal(G), & \\[0pt]
      \label{eq:maxcut-integer-dual2-y}
      & y \in \Reals_+^E, & \\[0pt]
      \label{eq:maxcut-integer-dual2-leq-vertex}
      & \iprodt{\incidvector{\delta(i)}}{(w+y)}
        \leq
        \iprodt{\incidvector{\binom{V}{i\in}}}{m}
      & \qquad & \forall i \in V, \\[0pt]
      \label{eq:maxcut-integer-dual2-leq-edge}
      & \iprodt{\incidvector{\binom{V}{ij\subseteq}}}{m} \leq w_{ij} + y_{ij}
      & \qquad & \forall ij \in E.
    \end{alignat}
  \end{subequations}
  has a unique optimal solution \((m^*,y^*)\), and it satisfies
  \(\supp(m^*) \subseteq E\), and for each \(e \in E\),
  \begin{gather*}
    m_e^* = [w_e \geq 0]w_e,
    \qquad
    y_e^* = -[w_e < 0]w_e.
  \end{gather*}
\end{corollary}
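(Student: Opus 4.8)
The plan is to reduce \Cref{cor:maxcut-integer-dual2} to \Cref{thm:maxcut-integer-dual} by splitting $w$ into its positive and negative parts. Write $w = w^+ - w^-$, where $w^+_e \coloneqq [w_e \geq 0]w_e$ and $w^-_e \coloneqq -[w_e < 0]w_e$, so that $w^+, w^- \in \Integers_+^E$ have disjoint supports; the candidate solution is the pair $(m^*, y^*)$ with $m^*$ the zero extension of $w^+$ to $\Powerset{V}\setminus\set{\emptyset}$ and $y^* \coloneqq w^-$. First I would check feasibility of $(m^*, y^*)$ for~\eqref{eq:maxcut-integer-dual2-leq}. Since $w + y^* = w^+ \geq 0$, this is immediate from \Cref{thm:maxcut-integer-dual} applied with weight function $w^+$: that result says the zero extension $m^*$ is (the unique optimal, hence in particular a) feasible solution of~\eqref{eq:maxcut-integer-dual-leq} with weight $w^+$, and its vertex and edge constraints are exactly \eqref{eq:maxcut-integer-dual2-leq-vertex} and~\eqref{eq:maxcut-integer-dual2-leq-edge} read with $w + y^*$ in place of the weight; the remaining constraints~\eqref{eq:maxcut-integer-dual2-leq-domain}--\eqref{eq:maxcut-integer-dual2-y} are clear.

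Next, for optimality, let $(m,y)$ be an arbitrary feasible solution of~\eqref{eq:maxcut-integer-dual2-leq}. The key observation is that $m$ alone is feasible for~\eqref{eq:maxcut-integer-dual-leq} with the \emph{integral} weight function $w^\flat \coloneqq \floor{w+y}$. Indeed, \eqref{eq:maxcut-integer-dual2-leq-edge} forces $w_e + y_e \geq 0$ for each $e \in E$ (its left-hand side is a nonnegative integer), so $w^\flat \in \Integers_+^E$, and that same integrality shows $\incidvector{\tbinom{V}{ij\subseteq}}^{\transp} m \leq \floor{w_e+y_e} = w^\flat_e$; moreover $\incidvector{\delta(i)}^{\transp} w^\flat \leq \incidvector{\delta(i)}^{\transp}(w+y)$ turns~\eqref{eq:maxcut-integer-dual2-leq-vertex} into the corresponding constraint for $w^\flat$. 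Since $y \geq 0$ and $w$ is integral, $w^\flat_e \geq \max\set{w_e, 0} = w^+_e$ for every $e$. Hence \Cref{thm:maxcut-integer-dual} yields
\[
  \iprodt{\ones}{m} \;\geq\; \iprodt{\ones}{w^\flat} \;\geq\; \iprodt{\ones}{w^+} \;=\; \iprodt{\ones}{m^*},
\]
so $(m^*, y^*)$ is optimal.

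Finally I would prove uniqueness. If $(m,y)$ is optimal, then equality holds throughout the displayed chain. From $\iprodt{\ones}{m} = \iprodt{\ones}{w^\flat}$ and feasibility of $m$ for~\eqref{eq:maxcut-integer-dual-leq} with weight $w^\flat$, the uniqueness part of \Cref{thm:maxcut-integer-dual} forces $m$ to be the zero extension of $w^\flat$; in particular $\supp(m)\subseteq E$ and $m\mathord{\restriction}_E = w^\flat$. From $\iprodt{\ones}{w^\flat} = \iprodt{\ones}{w^+}$ together with the componentwise inequality $w^\flat \geq w^+$ we get $w^\flat = w^+$, hence $m = m^*$. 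It remains to pin down $y$, which does not appear in the objective. The edge constraints~\eqref{eq:maxcut-integer-dual2-leq-edge} now read $w^+_e \leq w_e + y_e$, i.e.\ $y_e \geq w^-_e$ for every $e \in E$; on the other hand, summing the vertex constraints~\eqref{eq:maxcut-integer-dual2-leq-vertex} over $i \in V$ and using $\supp(m) \subseteq E$ on the right-hand side gives $2\bigl(\iprodt{\ones}{w} + \iprodt{\ones}{y}\bigr) \leq 2\iprodt{\ones}{w^+}$, that is, $\iprodt{\ones}{y} \leq \iprodt{\ones}{w^+} - \iprodt{\ones}{w} = \iprodt{\ones}{w^-}$. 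Combined with $y \geq w^-$ componentwise, this forces $y = w^- = y^*$, giving the stated closed forms $m^*_e = [w_e \geq 0]w_e$ and $y^*_e = -[w_e < 0]w_e$.

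I expect the main obstacle to be exactly this last step: because the objective is insensitive to $y$, uniqueness of the $y$-part is not automatic and must be recovered from the tension between the pointwise lower bounds $y_e \geq w^-_e$ coming from the edge constraints and the single global upper bound obtained by aggregating the vertex constraints over all of $V$. The floor trick in the optimality step, by contrast, is routine once one notices that $w+y$ is automatically nonnegative on feasible solutions, so that \Cref{thm:maxcut-integer-dual} can be invoked with an honest nonnegative integral weight.
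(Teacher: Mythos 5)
Your proof is correct and follows the same basic strategy as the paper's: reduce to \Cref{thm:maxcut-integer-dual} after observing that the edge constraints force \(w+y\geq 0\), hence \(y\geq y^*\). The difference is in how carefully the reduction is carried out, and here your version is actually tighter than the paper's. The paper fixes \(y=\bar{y}\) and asserts that the resulting restricted problem has optimal value \(\iprodt{\ones}{(w+\bar{y})}\) by \Cref{thm:maxcut-integer-dual}; but that theorem is stated for integral nonnegative weights, and for non-integral \(\bar{y}\) the asserted optimal value is not even an integer, so it cannot be the value of an integer program. Your floor trick --- passing to the honest integral weight \(\lfloor w+y\rfloor\), for which the edge constraints are equivalent and the vertex constraints only weaken --- is exactly what is needed to make the lower bound \(\iprodt{\ones}{m}\geq\iprodt{\ones}{w^{+}}\) rigorous. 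Likewise, the paper's uniqueness of the \(y\)-component implicitly relies on that same exact-value claim (equality forces \(\iprodt{\ones}{\bar{y}}=\iprodt{\ones}{y^*}\), which with \(\bar{y}\geq y^*\) gives \(\bar{y}=y^*\)); since the floor argument only pins \(y_e\) down to the interval \([y^*_e,\,y^*_e+1)\), your additional step of aggregating the vertex constraints over all of \(V\) to get the global bound \(\iprodt{\ones}{y}\leq\iprodt{\ones}{w^{-}}\) is a genuine and necessary supplement, not mere pedantry. In short: same reduction, but your write-up repairs two small gaps in the published argument.
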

\begin{proof}
  Let \((\mbar,\ybar)\) be feasible.  By
  \eqref{eq:maxcut-integer-dual2-leq-edge}, we have \(w+\ybar \geq 0\)
  so \(y \geq y^*\).  By \Cref{thm:maxcut-integer-dual}, the
  optimization problem \eqref{eq:maxcut-integer-dual2-leq} with the
  extra constraint \(y = \ybar\) has a unique optimal solution, and
  its optimal value is \(\iprodt{\ones}{(w+\ybar)}\), which is greater
  than or equal to \(\iprodt{\ones}{(w+y^*)}\), the objective value of
  the feasible solution \((m^*,y^*)\).
\end{proof}

Even though \Cref{cor:maxcut-integer-dual2} shows how the dual SDP for
MaxCut with an extra (redundant) constraint may have integral
solutions, the optimal value is always nonnegative.  The deeper
problem here is that the MaxCut SDP~\eqref{eq:maxcut-sdp} is not tight
for arbitrary weights~\(w\), even if the underlying graph is
bipartite.  Hence, if~\(\Cscr \subseteq \Reals^E\) is the projection
of the feasible region of~\eqref{eq:maxcut-sdp} via~\(\Lcal^*\), we
cannot even expect \(\Cscr = \Cscr_I\), let alone total dual
integrality of the defining system.

To see this, first note that, for a graph \(G = (V,E)\) and weights
\(w \colon E \to \Reals\), we should redefine the maximum cut problem
as the optimization problem
\(\sup\setst{\iprodt{w}{\incidvector{\delta(U)}}}{\emptyset \neq U
  \subsetneq V}\); when \(w \geq 0\), since \(\delta(\emptyset) =
\delta(V) = \emptyset\), it was harmless to keep both trivial sets \(U
= \emptyset\) and \(U = V\) in the feasible set.  Correspondingly, in
the MaxCut SDP~\eqref{eq:maxcut-sdp}, the feasible solution \(X
\coloneqq \oprodsym{\ones}\) shows that the optimal value is always
nonnegative, \emph{even when \(w\) is negative and \(G\) is
  connected\/}!  To prevent these trivial solutions from being
feasible in a modified MaxCut SDP, one may add the constraint
\(\iprod{\oprodsym{\ones}}{X} \leq (\card{V}-2)^2\); to see where the
RHS comes from, note that
\[
\max\setst{\iprod{\oprodsym{\ones}}{\oprodsym{s_U}}}{\emptyset \neq U
  \subsetneq V}
=
(\card{V}-2)^2,
\]
where \(s_U \coloneqq 2\incidvector{U}-\ones\) for each \(U \subseteq
V\).  These considerations lead us to strengthen~\eqref{eq:maxcut-sdp} as
\begin{equation}
  \label{eq:maxcut-sdp2}
  \begin{array}[!h]{rll}
    \text{Maximize}   & \iprod{\tfrac{1}{4}\Laplacian{G}(w)}{Y}          &                  \\[4pt]
    \text{subject to} & \iprod{\oprodsym{e_i}}{Y} = 1                    & \forall i \in V, \\[4pt]
                      & \iprod{\oprodsym{\ones}}{Y} \leq (\card{V}-2)^2, &                  \\[4pt]
                      & Y \in \Psd{V}.                                   &                  \\
  \end{array}
\end{equation}

Even this strengthened formulation is not exact for connected
bipartite graphs if we allow weights of arbitrary signs.  Consider,
for~instance, the path of length~\(3\) given by \(G =
([4],\set{12,23,34})\), with weights \(w = -\ones\).  Then MaxCut is
really a minimum cut problem and the optimal value is clearly \(-1\).
However, the feasible solution
\begin{equation*}
  \begin{bmatrix}
    1             & 1             & -\tfrac{1}{2} & -\tfrac{1}{2} \\[2.5pt]
    1             & 1             & -\tfrac{1}{2} & -\tfrac{1}{2} \\[2.5pt]
    -\tfrac{1}{2} & -\tfrac{1}{2} & 1             & 1             \\[2.5pt]
    -\tfrac{1}{2} & -\tfrac{1}{2} & 1             & 1             \\
  \end{bmatrix}
\end{equation*}
in~\eqref{eq:maxcut-sdp2} has objective value \(-3/4\).

These issues motivate the development of a theory of dual integrality
for weight functions in a cone, as~described in~\Cref{prob:KDI}.

\end{document}